\begin{document}

\newtheorem{theorem}{Theorem}[section]
\newtheorem{lemma}[theorem]{Lemma}
\newtheorem{corollary}[theorem]{Corollary}
\newtheorem{conjecture}[theorem]{Conjecture}
\newtheorem{cor}[theorem]{Corollary}
\newtheorem{proposition}[theorem]{Proposition}
\newtheorem{assumption}[theorem]{Assumption}
\theoremstyle{definition}
\newtheorem{definition}[theorem]{Definition}
\newtheorem{example}[theorem]{Example}
\newtheorem{claim}[theorem]{Claim}
\newtheorem{remark}[theorem]{Remark}

\newenvironment{pfofthm}[1]
{\par\vskip2\parsep\noindent{\sc Proof of\ #1. }}{{\hfill
$\Box$}
\par\vskip2\parsep}
\newenvironment{pfoflem}[1]
{\par\vskip2\parsep\noindent{\sc Proof of Lemma\ #1. }}{{\hfill
$\Box$}
\par\vskip2\parsep}

\newcommand{\R}{\mathbb{R}}
\newcommand{\T}{\mathcal{T}}
\newcommand{\C}{\mathcal{C}}
\newcommand{\Z}{\mathbb{Z}}
\newcommand{\Q}{\mathbb{Q}}
\newcommand{\E}{\mathbb E}
\newcommand{\N}{\mathbb N}

\newcommand{\barray}{\begin{eqnarray*}}
\newcommand{\earray}{\end{eqnarray*}}

\newcommand{\beq}{\begin{equation}}
\newcommand{\eeq}{\end{equation}}

\def\ep{\varepsilon}

\renewcommand{\Pr}{\mathbb{P}}
\newcommand{\Prob}{\Pr}
\newcommand{\Var}{\operatorname{Var}}
\newcommand{\Cov}{\operatorname{Cov}}
\newcommand{\Exp}{\mathbb{E}}
\newcommand{\expect}{\mathbb{E}}
\newcommand{\1}{\mathbf{1}}
\newcommand{\prob}{\Pr}
\newcommand{\pr}{\Pr}
\newcommand{\filt}{\mathscr{F}}
\DeclareDocumentCommand \one { o }
{%
\IfNoValueTF {#1}
{\1  }
{\1_{#1} }%
}
\newcommand{\Bernoulli}{\operatorname{Bernoulli}}
\newcommand{\Binomial}{\operatorname{Binom}}
\newcommand{\Binom}{\Binomial}
\newcommand{\Poisson}{\operatorname{Poisson}}
\newcommand{\Exponential}{\operatorname{Exp}}
\newcommand{\Unif}{\operatorname{Unif}}

\newcommand{\cadlag}{\text{c\`adl\`ag}}
\newcommand{\weakto}{\ensuremath{\Rightarrow}}

\DeclareDocumentCommand \D { o }
{%
\IfNoValueTF {#1}
{D}
{\partial_{{#1}}}
}%
\newcommand{\sgn}{\operatorname{sgn}}

\newcommand{\dsp}[2]
{%
\tfrac{n + #1}{n+#2}
}
\newcommand{\PDEOP}{\mathscr{S}^{\Psi}}
\newcommand{\PDEOPtilde}{{\widetilde{\mathscr{S}}}^{\Psi}}
\newcommand{\DSpace}{\mathcal{D}}
\newcommand{\Dspace}{\DSpace}
\newcommand{\Lloc}{L^1_{\text{loc}}}

\newcommand{\DTSpace}{\mathcal{X}}
\newcommand{\RTSpace}{\mathcal{R}}
\newcommand{\FPSpace}{\mathfrak{F}}
\newcommand{\Dto}{ \overset{\DTSpace}{\to} }
\newcommand{\aec}{\text{a.e.c.}}

\DeclareDocumentCommand \DTfn { m o O{}}
{
\IfNoValueTF {#2}
{#3 {\mathbf{ #1}}}
{#3 {#1}_{#2}}
}

\DeclareDocumentCommand \A { o }{\DTfn{A}[#1]}
\DeclareDocumentCommand \Atilde { o }{\DTfn{A}[#1][\widetilde]}
\DeclareDocumentCommand \F { o }{\DTfn{F}[#1]}
\DeclareDocumentCommand \Fhat { o }{\DTfn{F}[#1][\widehat]}
\DeclareDocumentCommand \Ftilde { o }{\DTfn{F}[#1][\widetilde]}
\DeclareDocumentCommand \G { o }{\DTfn{G}[#1]}
\DeclareDocumentCommand \Ghat { o }{\DTfn{G}[#1][\widehat]}
\DeclareDocumentCommand \Gtilde { o }{\DTfn{G}[#1][\widetilde]}
\DeclareDocumentCommand \M { o }{\DTfn{M}[#1]}
\DeclareDocumentCommand \Mtilde { o }{\DTfn{M}[#1][\widetilde]}

\DeclareDocumentCommand \diF { O{n} }{D_{#1}}
\DeclareDocumentCommand \diFtilde { O{n} }{\widetilde D_{#1}}

\DeclareDocumentCommand \DTfnsup { m m o O{}}
{
\IfNoValueTF {#3}
{ {#4 {\mathbf{ #1}}}^{{#2}} }
{ {#4 {#1}}^{{#2}}_{#3}}
}

\DeclareDocumentCommand \Ank { o }{\DTfnsup{A}{(n_k)}[#1]}
\DeclareDocumentCommand \Ankt { o }{\DTfnsup{A}{(n'_k)}[#1]}
\DeclareDocumentCommand \Mnk { o }{\DTfnsup{M}{(n_k)}[#1]}
\DeclareDocumentCommand \Mnkt { o }{\DTfnsup{M}{(n'_k)}[#1]}
\DeclareDocumentCommand \Aa { o }{\DTfnsup{A}{\alpha}[#1]}
\DeclareDocumentCommand \Aan { o }{\DTfnsup{A}{\alpha,(n)}[#1]}
\DeclareDocumentCommand \Aank { o }{\DTfnsup{A}{\alpha,(n_k)}[#1]}
\DeclareDocumentCommand \Aankt { o }{\DTfnsup{A}{\alpha,(n'_k)}[#1]}
\DeclareDocumentCommand \Aas { o }{\DTfnsup{A}{\alpha,(s)}[#1]}
\DeclareDocumentCommand \Aatilde { o }{\DTfnsup{A}{\alpha}[#1][\widetilde]}
\DeclareDocumentCommand \Aaa { o }{\DTfnsup{A}{{\alpha_+}}[#1]}
\DeclareDocumentCommand \Aaatilde { o }{\DTfnsup{A}{{\alpha_+}}[#1][\widetilde]}
\DeclareDocumentCommand \Ma { o }{\DTfnsup{M}{\alpha}[#1]}
\DeclareDocumentCommand \Man { o }{\DTfnsup{M}{\alpha,(n)}[#1]}
\DeclareDocumentCommand \Mank { o }{\DTfnsup{M}{\alpha,(n_k)}[#1]}
\DeclareDocumentCommand \Mankt { o }{\DTfnsup{M}{\alpha,(n'_k)}[#1]}
\DeclareDocumentCommand \Matilde { o }{\DTfnsup{M}{\alpha}[#1][\widetilde]}

\newcommand{\COP}{\mathscr{C}}
\newcommand{\Co}{\mathcal{C}_{0,b}}
\newcommand{\Coo}{{C}_{0}(0,\infty)}
\newcommand{\COPR}{\mathscr{R}}
\newcommand{\RG}{\mathscr{L}}
\newcommand{\RGT}{{\tilde{\mathscr{L}}}}

\newcommand{\domAG}{D_{\AG}}
\newcommand{\AG}{\mathscr{A}}
\newcommand{\domRG}{D_{\RG}}
\newcommand{\domRGB}{D_{\RGB}}
\newcommand{\RGB}{\mathscr{B}}
\newcommand{\RGK}{\mathscr{K}}
\DeclareDocumentCommand \Lpi { }
{%
\ensuremath{\operatorname{L}^1(\pi)}
}%
\DeclareDocumentCommand \Lpin { O{\cdot} }
{%
\| {#1}
\|_{\operatorname{L}^1(\pi)}
}%

\newcommand{\DTSSpace}{\mathcal{Y}}
\newcommand{\CFPSpace}{\mathfrak{F}^*}
\newcommand{\RFPSpace}{\mathfrak{R}}
\newcommand{\RFPSpacea}{\mathfrak{R}_{\operatorname{ac}}}
\newcommand{\DSto}{ \overset{\DTSSpace}{\to} }
\newcommand{\dtv}{d_{\operatorname{TV}}}

\DeclareDocumentCommand \surv { o }
{
\IfNoValueTF {#1}
{S}
{S^{#1}}
}
\DeclareDocumentCommand \tvnorm { O{\cdot} }
{%
\| {#1}
\|_{\operatorname{TV}}
}%

\DeclareDocumentCommand \candy { O{\cdot} }
{%
\| {#1}
\|_{x^{-2}}
}%
\newcommand{\dcandy}{d_{x^{-2}}}
\newcommand{\dblah}{d_{\Lloc}}

\title[Interval fragmentations with choice]{Interval fragmentations with choice: equidistribution and the evolution of tagged fragments}

\author{Pascal Maillard}
\address{Institut de Math\'ematiques de Toulouse, CNRS UMR 5921, Universit\'e de Toulouse, 118 route de Narbonne, F-31062 Toulouse Cedex 9, France}
\email{pascal.maillard at math.univ-toulouse.fr}
\author{Elliot Paquette}
\address{Department of Mathematics, McGill University \\
  Burnside Hall, Room 105 \\
  805 Sherbrooke Street W \\ 
  Montreal, Quebec, Canada}
\email{elliot.paquette@gmail.com}
\subjclass[2010]{Primary: 60F15, Secondary: 47D06}
\keywords{growth-fragmentation equation, ergodicity, Markov process, interval fragmentation}
\date{June 29, 2020}

\begin{abstract}
We consider a Markovian evolution on point processes, the $\Psi$--process, on the unit interval in which points are added according to a rule that depends only on the spacings of the existing point configuration.  Having chosen a spacing, a new point is added uniformly within it. Building on previous work of the authors and of Junge, we show that the empirical distribution of points in such a process is always equidistributed under mild assumptions on the rule, generalizing work of Junge.

A major portion of this article is devoted to the study of a particular growth--fragmentation process, or cell process, which is a type of piecewise--deterministic Markov process (PDMP).  This process represents a linearized version of a size--biased sampling from the $\Psi$--process.  We show that this PDMP is ergodic and develop the semigroup theory of it, to show that it describes a linearized version of the $\Psi$--process.  This PDMP has appeared in other contexts, and in some sense we develop its theory under minimal assumptions.

\end{abstract}

\maketitle

\section{Introduction}
\label{sec:introduction}

In this article, we revisit the $\Psi$--process introduced in \cite{candy} and studied in \cite{Junge}.  This point process on $[0,1]$ can be defined as follows.  Suppose that $N_t$ is the counting function for a Poisson process on $[0, \infty)$ with intensity $e^t.$  Suppose at time $0,$ the process is started at some discrete point configuration on $[0,1].$  A point is added to the configuration on $[0,1]$ at each time that $N_t$ jumps according to a Markovian rule we will now describe.  

At each time $t,$ the point configuration partitions $[0,1]$ into intervals, whose lengths we denote by
\(
I_1^{(t)},
I_2^{(t)},
\ldots,
I_{N_t+n_0}^{(t)}
\).
Define the size-biased empirical distribution function
\[
  \Atilde[t](x) = \sum_{i=1}^{N_t+n_0} |I_i^{(t)}| \one[|I_i^{(t)}|\le x].
\]
At a jump time $t,$ select an interval $\mathcal{I}_t$ with length $\Atilde[t_{-}]^{-1}(u)$ uniformly at random, where $u$ is sampled from a law on $(0,1]$ with distribution function $\Psi$.  Finally add a point to $\mathcal{I}_t$ uniformly at random.  

Different choices of $\Psi$ produce substantially different behavior, and some canonical choices of $\Psi$ produce processes with alternative descriptions (we will elaborate on this some later; see \cite{candy} for further discussion).  The case $\Psi(u) = u$ we call the \emph{uniform} process.  In this case, intervals are selected with probability proportional to length and subdivided uniformly, so that the evolution of interval lengths $\{I_i^{(t)}\}$ has the same law as the spacings between points which are added independently and uniformly in $[0,1]$ with rate $e^t.$

The motivating example for the $\Psi$--process is the \emph{max--$k$ process}, for $k \in \N,$ which is given by $\Psi(u) = u^k.$  This process can also be described by executing the following rule whenever $N_t$ jumps: choose $k$ independent and uniformly distributed points in $[0,1].$  Select the point the lands in the largest interval, breaking ties uniformly, and add it to the existing point configuration.  Analogously, in the \emph{min--$k$ process} for $k \in \N,$ given by $\Psi(u) = 1 - (1-u)^k,$ one instead selects the point that lands in the smallest interval.  While max--$k$ and min--$k$ processes have this alternative description for $k \in \N,$ as $\Psi$--processes they are well--defined for any real $k > 0.$

The max--$k$ process is an example of a \emph{choice} algorithm, in which a constant number of equivalent random choices are presented to an agent, which then employs a heuristic to choose between them.  There are a wide variety of problems in which this has been employed, see \cite{MRS01} for a survey, and which are collectively named the \emph{power--of--choice} paradigm.  While we will not elaborate on this body of work here, let us mention there has been subsequent developments on the power--of--choice in equidistribution of points \cite{Ohadfromthefuture}.

In \cite{candy}, we show that after appropriately rescaling, the distribution of interval lengths in the $\Psi$--process converges to a deterministic limit under some mild assumptions (we give the formal statement in Theorem \ref{thm:candy} below).  Most importantly, we will suppose the following.
  \begin{assumption} \label{ass:arsch}
There are constants $c_{\Psi}>0$ and $\kappa_\Psi\in[1,\infty)$ so that
    \[
  1-\Psi(u) \ge c_{\Psi}(1-u)^{\kappa_\Psi} \quad \text{ for all } u\in(0,1).
\]
\end{assumption}
\noindent This assumption ensures that large intervals are subdivided frequently enough.  While perhaps not optimal, it can not be removed entirely; for example in the simple case that $\Psi(u) \equiv 1$ on the interval $[1-u_0,1],$ it follows the associated $\Psi$--process will cease subdividing the longest interval once its length is less than $u_0,$ which precludes any deterministic scaling limit for the empirical distribution of interval lengths.

We define
\[
  \A[t](x) = \Atilde[t](e^{t}x)
\]
for all $t,x \geq 0.$ We have the following theorem:
\begin{theorem}[\cite{candy}]
\label{thm:candy}
Assume that $\Psi$ is continuous and satisfies Assumption \ref{ass:arsch}. Then there is an absolutely continuous probability measure $\mu^\Psi$ on $(0,\infty)$ with mean 1, independent of the initial configuration, such that $\A[t]$ converges pointwise to the function $F^{\Psi}(x) = \int_0^x y\,\mu^\Psi(dy)$, almost surely as $t\to\infty$. Furthermore, the Lebesgue-Stieltjes measures $\frac{1}{z}d\A[t](z)$ almost surely converge weakly to $\mu^\Psi$ as $t\to\infty$. The function $F^\Psi$ is characterized by the equation
\[
  \frac{dF(x)}{dx} = x\int_x^\infty \frac{1}{z}\,d\Psi(F(z)).
\]
\end{theorem}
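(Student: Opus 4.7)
The plan is to view $\A[t]$ as a measure-valued Markov process and run a fluid-limit argument. First, I would derive a semimartingale decomposition for $\A[t](x)$. The exponential rescaling contributes a deterministic transport term (rescaled interval lengths drifting linearly at rate $x$), while Poisson jumps at rate $e^t$ contribute a nonlocal drift. A direct computation using the selection rule --- a rescaled interval of length $z$ is selected with intensity $d\Psi(\A[t](z))$ and split uniformly, so averaging the splitting position contributes $x^2/z\cdot\mathbf{1}[z>x]$ to $\Delta\A[t](x)$ --- should yield
\[
d\A[t](x) = \Bigl(-x\,\partial_x\A[t](x) + x^2\!\int_x^\infty\! z^{-1}\,d\Psi(\A[t](z))\Bigr)\,dt + dM_t(x)
\]
for a martingale $M_t(x)$. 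Setting the drift to zero recovers the integral equation characterizing $F^\Psi$.

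Next I would control the martingale and establish tightness. Per-jump increments of $\A[t](x)$ are of order $e^{-t}$ (since the weight of an individual rescaled interval is), while jumps arrive at rate $e^t$; a computation of $\langle M(x)\rangle_t$ then yields uniform bounds, and Doob's inequality makes the fluctuations vanish on the appropriate timescale. Tightness of $\{\A[t]\}_{t\ge 0}$, viewed as a family of sub-probability distribution functions on $(0,\infty)$, follows from Assumption \ref{ass:arsch}: the lower bound on $1-\Psi(u)$ forces the largest intervals to be fragmented often enough that no mass escapes to $+\infty$ in the rescaled length variable, which gives a uniform upper bound on the right tail $1-\A[t](\cdot)$. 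Any subsequential limit $F$ therefore satisfies $F(0)=0$, $F(\infty)=1$, and the nonlocal integro-differential equation of the theorem.

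The main obstacle is uniqueness of the fixed-point equation, which is nonlocal and nonlinear in $F$. I would recast it as $F(x)=\int_0^x y\int_y^\infty z^{-1}\,d\Psi(F(z))\,dy$ under the constraint $F(\infty)=1$, and attack uniqueness either by a Grönwall bound on a weighted function space or by a contraction-mapping argument, exploiting monotonicity of $\Psi$ and the right-tail control from Assumption \ref{ass:arsch} (noting that for the uniform case $\Psi(u)=u$ one can verify uniqueness directly, since differentiating the integrand gives the autonomous equation $g'=-g$ for $g(x)=\int_x^\infty z^{-1}F'(z)\,dz$). Uniqueness, combined with tightness and the subsequential-limit characterization from the semimartingale decomposition, then yields the almost-sure pointwise convergence $\A[t]\to F^\Psi$, and via the change of variables $\mu^\Psi(dz)=z^{-1}\,dF^\Psi(z)$ the weak convergence $z^{-1}\,d\A[t](z)\weakto\mu^\Psi$.
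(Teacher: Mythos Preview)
The opening moves --- semimartingale decomposition, martingale control, tightness via Assumption~\ref{ass:arsch} --- are correct and match the machinery in \cite{candy} (and are partially recapitulated in this paper via $\PDEOP$ and Propositions~\ref{prop:semimartingale} and~\ref{prop:evo_prop}). The gap is in the sentence ``Any subsequential limit $F$ therefore satisfies \ldots\ the nonlocal integro-differential equation of the theorem.'' This does not follow from what you have set up. Vanishing noise plus tightness tell you that subsequential limits of the \emph{shifted trajectories} $s\mapsto \A[t_n+s]$ solve the deterministic \emph{evolution} equation $\F=\PDEOP(\F)$; they do not tell you that a pointwise limit of $\A[t_n]$ solves the \emph{stationary} equation. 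A subsequential limit $F$ is only an initial condition for some fluid trajectory, and nothing you have written rules out that trajectory being non-constant --- the process could pass through $F$ with nonzero drift infinitely often.

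Consequently the missing ingredient is not uniqueness of the fixed point but \emph{global attractivity} of $F^\Psi$ for the time-dependent nonlinear flow: you must show that every solution of $\F=\PDEOP(\F)$ converges to $F^\Psi$ as $t\to\infty$. This is strictly stronger than uniqueness of the stationary equation (a unique equilibrium need not attract), and it is precisely the role of the asymptotic pseudotrajectory / Kushner--Clark framework that the paper cites. In \cite{candy} the attractivity is obtained via a contraction-type argument for the flow on $\DSpace_1$ in the weighted norm $\candy[\cdot]$. Your Gr\"onwall/contraction idea is aimed at the right kind of estimate but at the wrong object: it has to control the time-dependent flow, not merely distinguish stationary solutions.
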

The proof of Theorem~\ref{thm:candy} in \cite{candy} relies on the solvability of a certain non-linear, non-local evolution equation together with the method of \emph{asymptotic pseudotrajectories}, or \emph{Kushner-Clark method}, for stochastic approximation algorithms, applied in an infinite-dimensional setting (see \cite{BeLeRa2002} for an earlier appearence of this method). Note that explicit convergence rates are, in general, unknown.

The max--$k$ and min--$k$ processes satisfy the assumptions of Theorem~\ref{thm:candy} for any $k \geq 1.$  The most substantial difference seen between these processes is in the tail behavior of $F^{\Psi}(x).$  For the max processes, $1-F^{\Psi}(x) \sim C_kxe^{-k x}$ as $x \to \infty$ (\cite[Proposition 9.2]{candy}) for some constant $C_k > 0$.  For the min processes with $k > 1$, $1-F^{\Psi}(x) \sim c_k x^{-1/(k-1)}.$  Nonetheless, we conjecture in \cite{candy} that the empirical distribution of points almost surely weakly converges to the uniform distribution on $[0,1]$ as $t\to\infty$, for all $\Psi$--processes to which Theorem~\ref{thm:candy} applies, in spite of the heavy--tailed behavior. A $\Psi$--process with this property is said to be \emph{equidistributed almost surely}.

In \cite{Junge}, Junge shows this for some $\Psi$--processes.
\begin{theorem}[\cite{Junge}]
  Suppose that $\Psi$ is $C^2[0,1],$ and let $\psi = \Psi'.$ If for some $\delta \in (0,1]$ and all $z \geq 0,$ 
  \[
    |z \psi'(F^{\Psi}(z)) (F^{\Psi})'(z) - \psi(F^{\Psi}(z))| \leq (2-\delta) \psi(F^{\Psi}(z)),
  \]
  then the $\Psi$--process is equidistributed almost surely as $t \to \infty.$
  \label{thm:Junge}
\end{theorem}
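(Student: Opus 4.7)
The plan is to show, for each fixed $x \in (0,1)$, that the positional empirical CDF $G_t(x) := (N_t + n_0)^{-1} \#\{i : x_i^{(t)} \le x\}$ converges to $x$ almost surely; by monotonicity and countable density this then upgrades to weak a.s.\ convergence of $\mu_t := (N_t+n_0)^{-1}\sum_i \delta_{x_i^{(t)}}$ to Lebesgue measure on $[0,1]$. Because the choice of which interval to subdivide depends only on the multiset of lengths (and is therefore position-blind) and the new point is inserted uniformly in the chosen interval, the conditional probability that the next added point falls in $[0,x]$ is
\[
  q_t(x) = \sum_k p_k^{(t)}\Bigl(\mathbf{1}_{I_k^{(t)} \subset [0,x]} + \mathbf{1}_{x \in I_k^{(t)}}\tfrac{x - a_k^{(t)}}{|I_k^{(t)}|}\Bigr),
\]
where $p_k^{(t)}$ is the selection probability of the $k$-th interval and $a_k^{(t)}$ its left endpoint. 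Passing to the logarithmic time scale $\tau = \log(N_t + n_0)$, the evolution of $G_\tau(x)$ decomposes as a deterministic drift $(q_\tau(x) - G_\tau(x))\,d\tau$ plus a martingale whose jumps are $O(e^{-\tau})$ and whose quadratic variation is summable, so the martingale converges almost surely.

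The core task is then to analyze the drift. By Theorem~\ref{thm:candy} the rescaled length distribution converges to $\mu^\Psi$, and a direct computation via the size-biased quantile transform shows that, in the rescaled variable $z := e^t|I_k^{(t)}|$, the selection probability of an interval of rescaled size $z$ is asymptotically proportional to $z\Phi(z)$ with $\Phi(z) := \psi(F^\Psi(z))$. If the positional distribution were uniform, spatial homogeneity of the length density would give $q_\tau(x) = x$ identically. Writing $h_\tau(y) := G_\tau(y) - y$, a first-order Taylor expansion around the uniform configuration yields
\[
  q_\tau(x) - x = (\mathcal{L}_\tau h_\tau)(x) + o(\|h_\tau\|),
\]
for a linear integral operator $\mathcal{L}_\tau$ whose kernel, after integration by parts, depends on $\Phi(z)$ and $z\Phi'(z) = z\psi'(F^\Psi(z))(F^\Psi)'(z)$. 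The hypothesis
\[
  \bigl|z\Phi'(z) - \Phi(z)\bigr| \le (2-\delta)\Phi(z)
\]
then translates into a contraction estimate for $\mathcal{L}_\tau - I$ on a suitable weighted $L^2$ space, equipping the effective linear semigroup governing $h_\tau$ with a spectral gap of size at least $\delta$. A Gronwall argument combined with the a.s.-convergent martingale yields $h_\tau \to 0$ almost surely, completing the proof.

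The principal obstacle is the linearization and its operator-theoretic interpretation. The probabilities $p_k^{(t)}$ are defined implicitly through the quantile transform of $\Psi$ applied to $\Atilde[t]$, so a perturbation of $G_\tau$ propagates in stages: it alters the local density of interval lengths near each $y$, which globally perturbs $\Atilde[t]$, which in turn perturbs the $\Psi$-quantile-based selection. Carrying these linearizations out with control of the remainders requires the $C^2$ regularity of $\Psi$, and the choice of weighted function space must be tailored to the (possibly heavy) tail of $\mu^\Psi$. Identifying the combination $z\Phi'(z) - \Phi(z)$ as the quantity governing the spectrum of $\mathcal{L}_\tau - I$, and thereby extracting the contraction rate $\delta$ from the hypothesis, is the decisive algebraic step.
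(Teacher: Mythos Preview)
This theorem is not proven in the present paper; it is quoted from \cite{Junge}. The paper does, however, describe Junge's method in enough detail to compare against your outline, and the comparison reveals both a structural difference and genuine gaps in your proposal.

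Junge does not linearize the positional CDF $G_\tau$ around the uniform distribution. Instead (see the discussion surrounding \eqref{eq:dynamics} and the operators $\COP$, $\COP^*$), he tracks the \emph{restricted size-biased length distribution} $\Aa$ on $[0,\alpha]$, derives its semimartingale decomposition, and linearizes by freezing the \emph{global} length distribution $\A$ at its limit $F^{\Psi}$. The resulting linear evolution is the operator $\COP^*$ acting on length-distribution functions, and the technical hypothesis is precisely a contraction condition for $\COP^*$ in the $\dcandy$-norm. The quantity $z\psi'(F^{\Psi}(z))(F^{\Psi})'(z) - \psi(F^{\Psi}(z))$ arises there because differentiating the drift term in $\COP^*$ with respect to the argument $\G$ produces exactly $z\,\partial_z[\psi(F^{\Psi}(z))]$ versus a zeroth-order $\psi(F^{\Psi}(z))$ term; the bound by $(2-\delta)\psi(F^{\Psi})$ then buys a strict contraction against the factor $2$ coming from the $x^2$ weight in $\COP$.

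Your route is different: you perturb in position space, writing $h_\tau = G_\tau - \mathrm{id}$, and hope that the drift $q_\tau(x) - G_\tau(x)$ linearizes as $(\mathcal{L}_\tau - I)h_\tau$. The difficulty you flag at the end is real and not resolved in your sketch. A perturbation of $G_\tau$ does not act directly on the selection probabilities $p_k^{(t)}$; it first perturbs the \emph{local} interval-length density near each $y$, which in turn perturbs the global $\Atilde[t]$, which finally perturbs the $\Psi$-quantile selection. You have not written down $\mathcal{L}_\tau$, let alone identified its spectrum, and the assertion that the hypothesis ``translates into a contraction estimate'' on a ``suitable weighted $L^2$ space'' is exactly the content of the theorem, not a step toward it. Without an explicit computation showing that your positional linearization recovers the same operator (or one with the same spectral gap) as Junge's length-space linearization, the argument is incomplete at its most important point.
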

This condition is satisfied by the max--$2$ process but surprisingly not by any max--$k$ process for $k \geq 3,$ nor any min--$k$ process.  It is however satisfied by certain combinations of max--$k$ and min--$k$ process, including ones which slightly favor picking a smaller interval.

In this paper, we remove the additional technical condition of \cite{Junge} and weaken the smoothness assumptions on $\Psi:$
\begin{theorem}
  Suppose that $\Psi$ is $C^1[0,1]$ and satisfies Assumption \ref{ass:arsch}.
  Then the $\Psi$--process is equidistributed almost surely as $t \to \infty.$
  \label{thm:main}
\end{theorem}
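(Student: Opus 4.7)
The plan is to leverage the ergodic theory of the tagged--fragment PDMP developed in this paper, combined with a spatial decoupling argument. Equidistribution of the counting empirical measure $\nu_t := (n_t+n_0)^{-1}\sum_y \delta_y$ on $[0,1]$ can be reformulated as the assertion that the joint size-biased empirical distribution of (rescaled length, left endpoint) converges to the product measure $\widehat\mu^\Psi \otimes \mathrm{Leb}_{[0,1]}$, where $\widehat\mu^\Psi(d\ell) = \ell\,\mu^\Psi(d\ell)$ is the size-bias of the limiting length distribution $\mu^\Psi$ from Theorem~\ref{thm:candy}. Indeed, if this joint convergence holds, integrating against the test function $1/\ell$ weighted by a position indicator recovers the count of intervals in $[0,a]$, giving $\nu_t([0,a])\to a$.

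Both marginal convergences are essentially already supplied. The position marginal $\sum_i |I_i^{(t)}|\delta_{x_i^{(t)}}$ agrees with $\mathrm{Leb}_{[0,1]}$ up to an error bounded by $\max_i|I_i^{(t)}|$ (since the intervals partition $[0,1]$), and this maximum vanishes under Assumption~\ref{ass:arsch} as a consequence of Theorem~\ref{thm:candy}. The length marginal convergence is exactly Theorem~\ref{thm:candy}. The crux is therefore the asymptotic independence of the two marginals: the rescaled interval length distribution should be the same whether or not we restrict to intervals whose left endpoint lies in a fixed sub-interval $[a,b]\subset[0,1]$. For this, I would introduce a tagged fragment by sampling $U\sim\mathrm{Unif}[0,1]$ independently of the process, letting $I_t^U$ be the unique interval containing $U$, and tracking $(L_t^U,X_t^U)$. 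The paper's semigroup theory identifies the asymptotic evolution of the rescaled length $e^tL_t^U$ with the ergodic PDMP. Conditioning on $U\in[a,b]$ only alters the effective initial condition of this tagged process, so by uniqueness of the invariant distribution, the asymptotic length law is independent of $[a,b]$; this is the desired decoupling in expectation.

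To upgrade from expectation to almost-sure equidistribution, I would combine a variance estimate exploiting the PDMP's mixing (via Harris-type or Lyapunov estimates established elsewhere in the paper) with Borel--Cantelli along a sufficiently dense subsequence of times, interpolating between sample times using a modulus-of-continuity bound for $t\mapsto\int f\,d\nu_t$ for Lipschitz $f$. The main obstacle, I expect, is the decorrelation step in the min--$k$ case with $k>1$: the heavy tail $1-F^\Psi(x)\sim c_kx^{-1/(k-1)}$ means that a few exceptionally long intervals carry macroscopic length but negligible count, so controlling the spatial distribution of these heavy-tailed outliers --- not merely their aggregate length --- is the delicate technical point. This is presumably where the full strength of the PDMP theory (quantitative ergodic bounds, together with integrability of a suitable Lyapunov function that encodes the $C^1$ hypothesis on $\Psi$) must be invoked, rather than merely the weak convergence of Theorem~\ref{thm:candy}.
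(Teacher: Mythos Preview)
Your high-level instinct---use the ergodicity of the cell process to show that the rescaled length distribution is asymptotically independent of spatial location---matches the paper's strategy. But two of your proposed steps have genuine gaps relative to what the paper actually needs and proves.

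First, the linearization step is glossed over. The tagged fragment $I_t^U$ in the actual $\Psi$--process is \emph{not} governed by the autonomous PDMP: its fragmentation rate at time $t$ is a functional of the random global empirical distribution $\Atilde[t]$, not of the deterministic limit $F^\Psi$. Your sentence ``conditioning on $U\in[a,b]$ only alters the effective initial condition of this tagged process'' is therefore not justified. The paper handles this by working instead with the restricted size-biased empirical distribution $\Aa[t]$ (Proposition~\ref{prop:semimartingale}), and then proving a continuity result (Proposition~\ref{prop:continuity}) showing that as $\A \Dto \F^*$ the nonlinear operator $\COP(\A,\cdot)$ can be replaced by the linearized $\COP^* = \COP(\F^*,\cdot)$. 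This is where the $C^1$ hypothesis on $\Psi$ is actually used, not in any Lyapunov construction.

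Second, your proposed upgrade from expectation to almost-sure convergence via variance bounds and Borel--Cantelli would require \emph{quantitative} mixing of the PDMP, which the paper does not establish (and explicitly says it does not: ``While we do not show a rate\ldots''; ``no Lyapunov functions are used''). The paper's route is instead a compactness / asymptotic-pseudotrajectory argument: using tightness, asymptotic equicontinuity, and the vanishing martingale noise (Proposition~\ref{prop:evo_prop}), any subsequential limit of $\alpha^{-1}\Aa$ is shown to lie in $\CFPSpace$, whence Proposition~\ref{prop:ergodicity} forces it to converge to $F^\Psi$. This uses only the qualitative ergodicity that the paper proves.

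Finally, the passage from $\Aa[t] \to \alpha F^\Psi$ to the counting statement $e^{-t}N_t^\alpha \to \alpha$ is not done by controlling heavy-tailed outliers as you anticipate. The paper uses the sandwich $e^{-t}(N_t^\alpha-1) \le \candy[\Aa[t]] \le e^{-t}N_t^\alpha$, gets $\liminf \candy[\Aa[t]] \ge \alpha$ by Fatou, and then closes the argument by symmetry: applying the same bound to $[\alpha,1]$ gives $\liminf \candy[\Aaa[t]] \ge 1-\alpha$, and since $\candy[\Aa[t]]+\candy[\Aaa[t]] = \candy[\A[t]] \to 1$, both liminfs are forced to be limits.
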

\noindent This in particular shows that all max--$k$ and min--$k$ processes equidistribute for $k \geq 1$.

The method we use is a development on top of \cite{Junge}, and we outline this method and how our method differs from \cite{Junge} (see Section \ref{sec:formalism} for a formal overview of the proof).  To show equidistribution, it suffices to show that for any $\alpha \in (0,1)$ the asymptotic fraction of points less than $\alpha$ is $\alpha.$  In turn, it should suffice to show that the empirical distribution of intervals contained in $(0,\alpha)$ is asymptotically the same as the global interval distribution.  In this case, heuristically, conditioned on picking an interval of some length, the probability that interval is in $(0,\alpha)$ is just $\alpha,$ on account of the total length of the interval being an $\alpha$ fraction of the whole; hence points would be added to $(0,\alpha)$ with asymptotic rate $\alpha e^{t}.$  Some smoothing of the interval lengths would be required to formalize such an argument, and in fact it is possible to bypass the issue entirely (see the proof of Theorem \ref{thm:main} in Section \ref{sec:formalism}).

To show that the empirical distribution of intervals in $(0,\alpha)$ has the same limit as the global distribution, we use what might be described as a linearization procedure.  We probabilistically describe the restriction of the $\Psi$--process to $(0,\alpha)$ as one in which potential points are added with rate $e^{t}$, but are thinned at a rate which is a functional of the global interval distribution of the $\Psi$--process (this is one interpretation of Proposition \ref{prop:semimartingale} or \cite[Proposition 3]{Junge} -- as a side comment, this is one place where $\Psi \in C^1[0,1]$ represents a natural technical barrier, as the description becomes problematic for rougher $\Psi$).  Knowing that the global interval distribution converges, we may seek to replace this functional by one that depends only on the \emph{limiting} interval distribution.  Having done so, we arrive at an \emph{analytic} description of an interval fragmentation process in which intervals evolve independently of one another.

The idea to make this linearized comparison is a key idea in the analysis of \cite{Junge}, and we weaken the smoothness assumptions on $\Psi$ required (see Proposition \ref{prop:continuity} and c.f.\ \cite[Proposition 5(V)]{Junge}).   Having made the reduction, the final major step is to show that the (deterministic) linearized evolution of the size--biased distribution functions converges to $F^{\Psi}.$  This is the origin of the technical hypothesis in Theorem \ref{thm:Junge}, and a new argument for this convergence is the major development in this article on Junge's and one which we believe may be of independent interest.

\subsection*{Growth--fragmentation equations}

The resulting linearized equation for the evolution of the (rescaled) size--biased distribution $\F = (\F[t])_{t \geq 0}$ is 
\begin{equation} \label{eq:RGa}
  \F[t](x)
  = {\F[0]}(e^{-t}x) + \int_0^t (e^{s-t}x)^2 \left[\int_{e^{s-t}x}^\infty \frac{R(z)}{z}\,d\F[s](z)\right]\,ds \quad \text{for all } t,x \geq 0,
\end{equation}
where $R$
is some locally integrable non--negative Borel function (for application to the $\Psi$--process, $R = \psi \circ F^{\Psi}$). 
We need to show that solutions of this equation converge for essentially arbitrary initial conditions as $t \to \infty$ to the limit:
\[
  F^R(x) = \frac{1}{Z^R} \int_0^x y\exp\left( -\int_1^x R(y)\,dy \right)\,dx,
\]
using the usual Riemann integration convention that for $x < 1,$ $\int_1^x R(y)\,dy = -\int_x^1 R(y)\,dy.$ 
The constant $Z^R$ is a normalization so that $F^R$ is a distribution function, and for $F^R$ to be a distribution function we must assume:
\begin{equation}\label{eq:ZR}
Z^R = \int_0^\infty y\exp\left( -\int_1^x R(y)\,dy \right)\,dx < \infty.
\end{equation}
Besides the finiteness of $Z^R,$ we make no further assumptions on $R.$  For the application to $\Psi$--processes, it is an automatic corollary of the theory in \cite{candy} that $Z^R$ is finite.  

The evolution equation \eqref{eq:RGa} is an integrated and scaled form of a very--well studied partial integro--differential equation, the growth--fragmentation equation, which originated in questions arising in mathematical biology (see e.g.\ \cite{Perthame,EngelNagel,CCF1,CCF2}).  
Following \cite{BertoinWatsonJFA}, the growth--fragmentation equation is given by
\begin{equation} \label{eq:gfeq}
  \partial_t u_t(x) + \partial_x( u_t(x)c(x) ) = \int_x^\infty u_t(y)k(y,x)dy - u_t(x)K(x).
\end{equation}
The quantity $u_t(x)$ represents the density of particles of varying masses $x.$  Each particle grows with a rate $c: (0,\infty) \to (0,\infty)$ that depends on its mass.  The particles fragment into smaller masses, and $k(y,x)$ represents the rate at which particles of mass $x$ result from fragmentation of a particle of mass $y > x.$ The term $K(x)$ represents the rate of destruction of particles of mass $x,$ as a result of fragmentation.
In our application, $c(x) = x$ and $k(y,x) = 2xR(y)/y,$ and $K(x) = xR(x).$  The special choice of $c(x) = x$ is is called the \emph{self--similar fragmentation equation} in \cite{CCM} (caveat emptor: \cite{CCM} use a different meaning of self--similar fragmentation kernel than \cite{BCG} or \cite{BertoinWatsonJFA}).

A common assumption in the literature is the assumption of \emph{mass--conservative fragmentations}, so that no mass is created or destroyed upon fragmentation of a particle (\cite{Bertoin,BertoinWatsonAAP,BertoinWatsonJFA,BCG,CCM}.  Analytically, this corresponds to 
\[
  K(y) = \frac{1}{y}\int_0^y x k(y,x) dx, \quad \text{ for all } y > 0.
\]
Our process \emph{does not} satisfy this assumption.  Rather, we assume \emph{population--conservative fragmentations}, that is
\[
  K(y) = \int_0^y k(y,x) dx, \quad \text{ for all } y > 0.
\]
This corresponds to the evolution of mass--biased random sampling of a fragment from a mass--conservative fragmentation.  Such fragmentation equations have been called \emph{conservative} in the literature (c.f.\ \cite{Bertoin, Bouguet}), but as there is an obvious risk of confusion, we shall enforce the lengthier \emph{population--conservative} terminology.  Besides appearing in \cite{Bouguet} explicitly, it also appears in quite general form in the work of \cite{BertoinWatsonJFA} and also in a special case (the TCP process) in \cite{Chafai,Bardet}.

The population--conservative growth--fragmentation equation (PCGFE) has been used to model the size of a biological cell, which grows in time and then undergoes mitosis (fragments) into two daughter cells, after which point \emph{one} of the resulting daughters is chosen as a representative of the two.  This naturally corresponds to a piecewise deterministic Markov process (PDMP), which grows deterministically and then jumps down. It has been called a \emph{cell process} in the literature (\cite{Bouguet,Bertoin}).  This process appears in \cite{Bouguet} (take $\tau(x)=x, \beta(x) = xR(x),$ and $Q(x,dy) = 2y\,dy$) as does it appear in \cite{BertoinWatsonJFA} ($\bar{k}(x,y) = 2yR(x)/x$ and $\underline{c}(x) = 0$).  

Our main task is to show that solutions of PCGFEs tend to equilibrium in the large--time limit.  This we do by a probabilistic method, which essentially has three components.  The first part of the work is to construct the cell process.  While general existence theorems for PDMPs exist (for example \cite{Davis1984}), they are generally formulated under the assumption of some type of uniform control on the jump rate.  As we do not assume such a feature, we must show that the cell process is well--posed and has the usual desirable properties, c.f.\ Proposition \ref{prop:markov}.

The second part of the work is to show that PCGFE solutions can be represented by the semigroup of the associated cell process  (Proposition \ref{prop:rgb}, c.f.\ Sections \ref{sec:diff_mu}, \ref{sec:construction_markov_process}, \ref{sec:Lpi_and_uniqueness}).  Under additional continuity and boundedness assumptions on $R,$ as assumed in \cite{Bouguet, BertoinWatsonJFA}, the usual $C_b(\R)$--semigroup theory, such as that which is found in \cite{EthierKurtz}, applies.  We assume very little boundedness and smoothness of $xR(x),$ and so in fact the $C_b(\R)$--semigroup formulation is nonsense (in fact, even under the additional assumption that $R$ be continuous, the $C_b(\R)$--setting is substantially complicated by the lack of boundedness on $xR(x)$).  Hence, we switch to an $\Lpi$--setting (with $\pi=dF^{R}$) to develop the semigroup theory.  This $\Lpi$--semigroup setting is common in the analysis literature on growth--fragmentation equations, see for example \cite{Mischler2016}.  Moreover, we must use some of the central ideas from \cite{Mischler2016} to develop this $\Lpi$--semigroup theory for the cell process.

The final part is to show the resulting cell process is ergodic (see Section \ref{sec:ergodicity}).  This we do by appealing to the regenerative structure of the cell process.  While we do not show a rate, as is common in much of the literature (\cite{Bouguet,BertoinWatsonJFA,BCG,CCM, Mischler2016}, we emphasize that the argument we give still works under minimal assumptions on $R.$  The argument is also conceptually simple, from the probabilistic viewpoint.  

To our knowledge, the only explicit existing work on ergodicity of PCGFEs is in \cite{Bouguet,BertoinWatsonJFA}, which would only apply to regularly varying and continuous $R$ (see \cite[Assumption 2.1]{Bouguet} and \cite[Assumption 6.1]{BertoinWatsonJFA}).  The method of \cite{Mischler2016} could in principle apply, but the details are only worked out for some explicit cases.  Let us also mention that a central idea of \cite{BertoinWatsonJFA} is something like a Girsanov transform, that allows certain mass--conservative growth--fragmentation equations (MCGFEs) to be related to other PCFGEs.  This could in priniciple allow results to be transfered between MCGFEs and PCGFEs.  We give some further discussion of our contributions to the theory of growth--fragmentation equations in Section \ref{sec:growth-fragmentation}.

\section*{Acknowledgements}

This work was initiated while the second author was invited professor (\emph{professeur invit\'e}) at Universit\'e Paris-Sud. The authors thank this institution for the support and hospitality. They would also like to thank the CRM at Universit\'e de Montr\'eal for their hospitality during a second meeting.  The authors also are indebted to Itai Benjamini, Jean Bertoin, B\'en\'edicte Haas, Matt Junge, Jean-Claude Lootgieter, Olivier Raimond and Lorenzo Zambotti for helpful conversations regarding this work.

\section{Formalism}
\label{sec:formalism}

In this section, we will give the formal setup of the $\Psi$--process and the approach to Theorem \ref{thm:main}.
Let $N_t^{\alpha}$ be the number of points that land in $[0,\alpha].$  To prove Theorem~\ref{thm:main}, it suffices to show that $e^{-t}N_t^{\alpha} \to \alpha.$
We use the convention that boldface letters represent processes, i.e.\,they are time dependent.  This parameter will appear as a subscript when referring to the process at a fixed time, e.g. $\A = (\A[t])_{t \geq 0}.$

We define the process $(\Aatilde, \Aaatilde, \Atilde)$ to be the size-biased empirical distributions of interval lengths contained in $([0,\alpha], [\alpha,1],[0,1])$, respectively.  An interval which straddles the point $\alpha$ contributes whatever fraction of its length landed on either side of $\alpha$ to $\Aatilde$ or $\Aaatilde$ respectively.  For every $t \geq 0,$ let $g_t^\alpha : \left\{ 1,2,\dots,N_t \right\} \to [0,1]$ be defined by
\[
  g_t^\alpha(j) = \frac{| I_j^{(t)} \cap [0,\alpha]|}{|I_j^{(t)}|}.
\]
We then define
\[
  \Aatilde[t](x) = \sum_{j=1}^{N_t} g_t^\alpha(j) |I_j^{(t)}| \cdot \one[|I_j^{(t)}| \leq x],
\]
for all $t,x \geq 0$ and define $\Aaatilde$ by $\Aaatilde = \Atilde - \Aatilde.$  

By comparing with the case of $\alpha = 1,$ in which $g_t^\alpha = 1,$ we see $d\Aatilde[t]$ is absolutely continuous with respect to $d\Atilde[t]$ for all $t.$  Moreover, the Radon--Nikodym derivative $\frac{d\Aatilde[t]}{d\Atilde[t]}(x)$ is equal to the fraction of mass of intervals of length $x$ that are contained in $[0,\alpha].$  In particular, we may take $\sup_{t \geq 0, x\in [0,1]} \left| \frac{d\Aatilde[t]}{d\Atilde[t]}(x)\right| \leq 1.$

It is now possible to give a semimartingale decomposition of $\Aatilde.$
\begin{proposition} 
  \label{prop:semimartingale}
  Suppose that the intervals $\left\{ I^{(0)}_j : 1 \leq j \leq n_0 \right\}$ have distinct lengths.
  Suppose that $\Psi$ is continuous. %
The process $(\Aatilde, \Atilde)$ satisfies the equation
\[
  \Aatilde[t](x)
  =
  \Aatilde[0](x)+ 
  \int_0^t e^s x^2 \int_x^\infty 
  \frac{d\Aatilde[s]}{d\Atilde[s]}(z)
  \cdot\frac{ d\Psi(\Atilde[s](z))}{z}
  \,ds + \Matilde[t](x),
\]
for all $t,x \geq 0,$ where $\Matilde$ is a martingale.
\end{proposition}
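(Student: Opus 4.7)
The plan is to view the interval configuration $(I_j^{(t)})_j$ as a pure--jump, time--inhomogeneous Markov process and to read off the stated decomposition from Dynkin's formula. Jumps occur at the jump times of $N_t$ (intensity $e^s$), and conditional on the pre--jump state, the interval $I_j$ of length $z_j$ is selected with probability $\mu_j(s) := \Psi(\Atilde[s-](z_j)) - \Psi(\Atilde[s-](z_j^-))$ and then subdivided at a point uniform in $I_j$. The distinct--initial--lengths hypothesis, together with continuity of $\Psi$ and the absolute continuity of the law of the split point, ensures that interval lengths remain almost surely distinct for all $t$, so that the selection rule unambiguously picks a single interval at each jump.

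I then apply Dynkin's formula to the functional $\phi_{x,\alpha} := \sum_j |I_j \cap [0,\alpha]|\,\one[|I_j| \le x]$, which equals $\Aatilde[t](x)$ at time $t$. Standard theory for pure--jump (time--inhomogeneous) Markov processes gives
\[
\Aatilde[t](x) - \Aatilde[0](x) - \int_0^t \mathcal{L}_s\phi_{x,\alpha}\,ds
\]
as a local martingale, where the generator reads
\[
\mathcal{L}_s\phi_{x,\alpha} = e^s \sum_j \mu_j(s)\,\E_{Y\sim\Unif(I_j^{(s)})}\bigl[\,\Delta_j(Y)\,\bigr],
\]
with $\Delta_j(Y)$ the change in $\phi_{x,\alpha}$ when $I_j^{(s)}$ is split at $Y$. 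It remains to identify this integrand with the right--hand side of the proposition and to upgrade the local martingale to a true one.

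The computational core is the per--interval calculation of $\E_Y[\Delta_j(Y)]$. For $z_j \le x$ the change is identically zero: both split pieces have length at most $z_j \le x$, and $|I_j \cap [0,\alpha]|$ is preserved additively under splitting. For $z_j > x$, parametrise the split by the left--piece length $L \sim \Unif(0,z_j)$ and evaluate
\[
\E\bigl[\,|I_j' \cap [0,\alpha]|\,\one[L \le x] + |I_j'' \cap [0,\alpha]|\,\one[z_j - L \le x]\,\bigr]
\]
by elementary integration, with separate cases for $I_j \subseteq [0,\alpha]$, $I_j \subseteq [\alpha,1]$, and $I_j$ straddling $\alpha$. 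After simplification this collapses to $x^2 r_j/z_j^2$ with $r_j := |I_j \cap [0,\alpha]|$. Summing over $j$, and using that $d\Psi(\Atilde[s](\cdot))$ places mass $\mu_j(s)$ at $z_j$ while $(d\Aatilde[s]/d\Atilde[s])(z_j) = r_j/z_j$, one recognises the sum as $e^s x^2 \int_x^\infty (d\Aatilde[s]/d\Atilde[s])(z)\cdot(1/z)\,d\Psi(\Atilde[s](z))$.

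Finally, the local martingale is upgraded to a true one using $\Aatilde[t](x) \in [0,1]$ together with the fact that the compensator is dominated by the analogous compensator for $\Atilde[t](x)$, itself bounded on compact time intervals; a standard localisation argument then removes the ``local''. The step I expect to be most delicate is the straddling case in the per--interval calculation, where one must track how $|I_j \cap [0,\alpha]|$ splits between the two new pieces and verify that the two contributions combine into the clean expression $x^2 r_j/z_j^2$; continuity of $\Psi$ enters here to guarantee that the selection measure $d\Psi\circ\Atilde[s]$ charges only the atoms at the $\{z_j\}$.
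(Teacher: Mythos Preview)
Your approach via Dynkin's formula for the pure-jump interval configuration is essentially the paper's own argument (Poisson random measure compensation) recast in generator language; the two are equivalent presentations of the same semimartingale decomposition.

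There is, however, a genuine gap in the straddling case: the expected increment does \emph{not} simplify to $x^2 r_j/z_j^2$. Write the straddling interval as $I_{j^*}=[a,a+z]$ with $r=\alpha-a\in(0,z)$ and split at $a+L$ with $L$ uniform on $(0,z)$. The left daughter has length $L$ and $\alpha$-mass $\min(L,r)$, the right daughter has length $z-L$ and $\alpha$-mass $(r-L)_+$; so for $z>x$ the expected increment is
\[
\frac{1}{z}\int_0^{x}\min(L,r)\,dL\;+\;\frac{1}{z}\int_{z-x}^{z}(r-L)_+\,dL,
\]
which for $z=1$, $r=0.2$, $x=0.3$ evaluates to $0.04$, not $x^{2}r/z^{2}=0.018$. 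The underlying reason is that when the straddling interval is split, its $\alpha$-mass is \emph{not} apportioned to the two daughters in proportion to their lengths, so the Radon--Nikodym factor $r_j/z_j$ cannot be pulled out as a common weight on both pieces. The paper's own proof contains the same oversight: its pointwise increment $\tilde B^{\alpha}_s$ applies the single factor $k_t^{\alpha}(\ell_s(u))$ to both daughters, which is only correct for non-straddling intervals. Since at most one interval straddles $\alpha$ at any time and its length tends to $0$, the discrepancy is a bounded finite-variation correction that is harmless for the downstream limit arguments; but $\Matilde$ as defined by the stated formula is not literally a martingale, and you should either carry this correction term explicitly or isolate it and argue it is asymptotically negligible, rather than assert the clean identity.
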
 

\begin{remark}\label{rem:abscontinuity}
  If in addition $\Psi$ is absolutely continuous, then one can rewrite the expression on the right-hand side in the above proposition. First note that for any real numbers $a < b,$ and any distribution function $F,$  we have $\int \one[(a,b)]d(\Psi \circ F) \leq |\Psi(F(b)) - \Psi(F(a))|.$  It follows for any $\epsilon>0$ there is a $\delta > 0$ so that for any open set $U$ with $\int_U dF < \delta$ then $\int_U d(\Psi \circ F) \leq \epsilon.$ So, by regularity $d\Psi \circ F$ is absolutely continuous with respect to $dF.$  Hence, by the Radon--Nikodym theorem, we can also write for any $x > 0$ and any $s \geq 0,$
  \[
    \int_x^\infty 
    \frac{d\Aatilde[s]}{d\Atilde[s]}(z)
    \cdot\frac{ d\Psi(\Atilde[s](z))}{z}
    =
    \int_x^\infty 
    \frac{d\Psi(\Atilde[s])}{d\Atilde[s]}
    \cdot\frac{d\Aatilde[s]}{d\Atilde[s]}
    \cdot
    \frac{d\Atilde[s](z)}{z}
    =
    \int_x^\infty 
    \frac{d\Psi(\Atilde[s])}{d\Atilde[s]}(z)
    \cdot\frac{d\Aatilde[s](z)}{z}.
  \]

  If in addition $\Atilde[s]$ is absolutely continuous, then we may take for all $z \in [0,1]$
  \begin{equation}\label{eq:jungian}
    \frac{d\Psi(\Atilde[s])}{d\Atilde[s]}(z)
    =\psi(\Atilde[s](z)),
    \quad
    \text{where}
    \quad
    \Psi(z) = \int_0^z \psi(u)\,du.
  \end{equation}
  Note that without the assumption that $\Atilde[s]$ is absolutely continuous, we may no longer
  have \eqref{eq:jungian}.  This can be seen for example in the simple case that 
  \[
    \Atilde[s](z) = 
    \frac{1}{2}\one[z \geq 1/3]
    +\frac{1}{2}\one[z \geq 2/3]
    \quad
    \text{and}
    \quad
    \Psi(u)=u^2.
  \]
  In particular \cite[Proposition 3]{Junge} is not correct as claimed. 
  Nonetheless, the results of \cite{Junge} appear to be fixable by using Proposition \ref{prop:semimartingale}.
\end{remark}

\begin{proof}
  Under the assumption of distinct starting lengths, and from the definition of the splitting rule for the intervals (i.e.\ uniformly), almost surely all intervals $\{I^{(t)}_j : 1 \leq j \leq N_t+n_0\},$ have distinct lengths at all times $t \geq 0.$ 
  
  Define a Poisson random measure $\Pi$ on $[0,\infty) \times [0,1]^2$ with intensity $e^t \,dt \otimes d\Psi(u) \otimes dv.$  Let $\ell_t(u) = \Atilde[t^{-}]^{-1}(u)$ for all $u \in [0,1],$ with $\Atilde[t^{-}]^{-1}$ the generalized right--continuous inverse of $\Atilde[t^{-}],$ i.e.\ for all $p \in (0,1],$
    \begin{equation}\label{eq:quantile}
    \Atilde[t^{-}]^{-1}(p) = \inf \left\{ x : \Atilde[t^{-}](x) > p \right\}.
  \end{equation}
  As the lengths are distinct, we may unambiguously define a bounded non--negative measurable function $k_t^{\alpha}$ so that $k_t^{\alpha}(\ell_t(u)) = g_t^{\alpha}(j)$ where $1 \leq j \leq N_t + n_0$ is the index so that $|I_j^{(t)}| = \ell_t(u).$
  
  At a point $(s,u,v) \in \Pi$ the interval with length $\ell_s(u)$ is split into two intervals, one of fraction $v$ length of the whole.  This affects the empirical distribution function $\Atilde$ by replacing an atom at $x=\ell_s(u)$ by two atoms at $x = v\ell_s(u)$ and at $x=(1-v)\ell_s(u).$  Hence if we let $h(v,\ell,x) = v \one[\ell v \leq x] + (1-v) \one[ \ell (1-v) \leq x],$ 
  then we have the identity that for any $x \in [0,1]$ and $t \geq 0$
  \[
    \begin{aligned}
    &\Aatilde[t](x) = \Aatilde[0](x) + 
    \sum_{(s,u,v) \in \Pi, s\leq t} 
    \tilde{B}^{\alpha}_s(u,v,x), 
    \quad
    \text{where}
    \\
    &\tilde{B}^{\alpha}_s(u,v,x)
    =
    \ell_s(u) k_t^{\alpha}(\ell_s(u)) \one[ \ell_s(u) > x] h(v,\ell_s(u),x).
  \end{aligned}
  \]

  We wish to form a semimartingale decomposition, and so we will integrate $\tilde{B}^{\alpha}$ against the intensity of $\Pi.$
  We observe that for $x < \ell_s(u),$
  \[
    \int_0^1  h(v,\ell_s(u),x)\,dv = \left( \frac{x}{\ell_s(u)} \right)^2,
  \]
  and therefore
  \[
    \int_0^1
    \int_0^1
    \tilde{B}^{\alpha}_s(u,v,x)\,dv 
    d\Psi(u)
    =
    x^2
    \int_0^1
    \frac{k_t^{\alpha}(\ell_s(u))}{\ell_s(u)} \one[ \ell_s(u) > x]
    d\Psi(u).
  \]
  By the explicit definition of $\Atilde[t^{-}]^{-1}$ in \eqref{eq:quantile}, for any $a < b \in \R,$ with $F=\Atilde[t^{-}],$
  \[
    \left\{ u \in \R : F^{-1}(u) \in (a,b) \right\}
    \subseteq
    [F(a),F(b))
    \subseteq
    \left\{ u \in \R : F^{-1}(u) \in [a,b] \right\}
  \]
  From the continuity of $\Psi$ it follows that 
  \[
    \int \one\{u : F^{-1}(u) \in (a,b]\}
    \,d\Psi(u)
    =
    \Psi(F(b)) - \Psi(F(a))
    =
    \int \one\{ u \in (a,b]\}
    d\Psi(F(u)).
  \]
  By a monotone class argument, it follows for all non--negative Borel measurable functions $f$ and continuous $\Psi,$
  \[
    \int
    f(\Atilde[s^{-}]^{-1}(u))\,d\Psi(u) 
    =
    \int
    f(u)\,d\Psi(\Atilde[s^{-}](u)). 
  \]
  Thus we conclude that
    \[
    \int_0^1
    \int_0^1
    \tilde{B}^{\alpha}_s(u,v,x)\,dv 
    d\Psi(u)
    =
    x^2
    \int_x^\infty
    \frac{d\Aatilde[s^{-}]}{d\Atilde[s^{-}]}(z)
    \cdot
    \frac{d\Psi(\Atilde[s^{-}](z))}{z}
    \]
    As we then integrate the expression in time, we may freely replace the $s^{-}$ by $s$
    to conclude the proof.
\end{proof}

\noindent We also define $(\Aa,\Aaa,\A)$ and $\Ma$ by letting $\Aa[t](x) = \Aatilde[t](e^{t}x)$ for all $t,x \geq 0$ and similarily for $\Aaa,\A,$ and $\Ma.$  After this time change, we can write
\begin{equation}
  \label{eq:dynamics}
  \Aa[t](x)
  =
  \Aa[0](x)+ 
  \int_0^t (e^{s-t}x)^2 \int_{e^{s-t}x}^\infty 
  \frac{d\Aa[s]}{d\A[s]}(z)
  \cdot\frac{ d\Psi(\A[s](z))}{z}\,ds 
  + \Ma[t](x),
\end{equation}
for all $t,x \geq 0.$  This motivates the study of the operator on pairs of time evolving distribution functions that appears as the drift term in this decomposition.  To formalize this operator, we recall some notation from \cite{candy,Junge}.

Define the space $\Lloc$ of locally integrable functions $f:[0,\infty)\to\R$, endowed with the following canonical metric $\dblah$,
\[
\dblah(f,g) = \sum_{k=1}^\infty 2^{-k} \wedge \int_0^k |f(x)-g(x)|\,dx,
\]
which makes $\Lloc$ into a complete separable metric space. Define the subspace $\DSpace\subset\Lloc$ of subdistribution functions by
\[
\DSpace = \left\{
F : [0, \infty] \to [0,1],\text{ c\`{a}dl\`{a}g, increasing}
\right\}.
\]
In this paper we will reserve the term \emph{distribution function} for cumulative distribution functions of probability measures. Recall that convergence under $\dblah$ of distribution functions is equivalent to vague convergence:
\begin{lemma}[Lemma 2.3 of \cite{candy}]
\label{lem:L1loc_pointwise}
For $F,F_1,F_2,\ldots\in\DSpace$, $F_n\to F$ with respect to $\dblah$ if and only if $F_n(x)\to F(x)$ at every point of continuity $x$ of $F$. 
\end{lemma}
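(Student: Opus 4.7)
The plan is to unpack the metric $\dblah$ and prove the two directions separately; the central observation is that $\dblah(F_n,F)\to 0$ is equivalent to $\int_0^k |F_n(x)-F(x)|\,dx \to 0$ for every $k\in\N$, i.e.\ $L^1_{\text{loc}}$-convergence on $[0,\infty)$. From here I would exploit monotonicity of the $F_n$ in one direction and the countability of discontinuities of $F$ in the other.

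For the implication ($L^1_{\text{loc}}$ convergence $\Rightarrow$ pointwise convergence at continuity points of $F$), fix a continuity point $x$ of $F$ and a small $\delta>0$. Since $F_n$ is increasing, the elementary sandwich
\[
\frac{1}{\delta}\int_{x-\delta}^x F_n(y)\,dy \;\le\; F_n(x) \;\le\; \frac{1}{\delta}\int_x^{x+\delta} F_n(y)\,dy
\]
holds (for $n$ such that $x+\delta \le k$ in the relevant interval). Passing to the limit $n\to\infty$ via $L^1$-convergence on $[0,k]$, both integrals converge to the analogous integrals of $F$, yielding
\[
\frac{1}{\delta}\int_{x-\delta}^{x} F(y)\,dy \;\le\; \liminf_n F_n(x) \;\le\; \limsup_n F_n(x) \;\le\; \frac{1}{\delta}\int_x^{x+\delta} F(y)\,dy.
\]
By monotonicity and c\`adl\`ag-ness of $F$, these bounds are sandwiched between $F(x-\delta)$ and $F(x+\delta)$; letting $\delta\downarrow 0$, both tend to $F(x)$ by continuity at $x$, which forces $F_n(x)\to F(x)$.

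For the converse, assume pointwise convergence $F_n(x)\to F(x)$ at every continuity point $x$ of $F$. The set of discontinuities of a monotone function $F$ is at most countable, hence has Lebesgue measure zero, so $F_n\to F$ holds Lebesgue-a.e.\ on $[0,\infty)$. Since $F_n,F$ take values in $[0,1]$, the constant $1$ is a dominating function on every interval $[0,k]$; by the dominated convergence theorem, $\int_0^k |F_n(x)-F(x)|\,dx\to 0$ for each $k$, which gives $\dblah(F_n,F)\to 0$ by the definition of the metric and a further application of dominated convergence to the tail of the series.

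There is no real obstacle here; the only mildly delicate point is the forward direction, where one must take care that the c\`adl\`ag convention on $F$ is compatible with the sandwich (the limit $F(x-\delta)\to F(x^-)=F(x)$ uses precisely the hypothesis that $x$ is a continuity point, while $F(x+\delta)\to F(x)$ is automatic from right-continuity).
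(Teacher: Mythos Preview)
The paper does not actually prove this lemma; it is quoted verbatim from \cite{candy} and used as a black box, so there is no ``paper's own proof'' to compare against. Your argument is a clean, self-contained proof of the statement and is correct as written.

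One small caveat worth recording: in the forward direction your sandwich requires $x-\delta\ge 0$, so the argument as stated handles only continuity points $x>0$. At $x=0$ the left-average is unavailable, and in fact the statement can fail there under the bare definition of $\DSpace$ (e.g.\ $F_n=\one[\,\cdot\ge 1/n]$ converges in $\dblah$ to the constant function $1$, which is continuous at $0$, yet $F_n(0)=0$). This is not a flaw in your reasoning but a boundary artefact of the lemma itself; in practice one either restricts to $x>0$ or works with subdistribution functions vanishing at $0$, and your proof is exactly what is needed in either reading.
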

\noindent Lemma~\ref{lem:L1loc_pointwise} and Helly's selection theorem imply in particular that $(\DSpace,\dblah)$ is a compact metric space.

In \cite{candy,Junge}, the following norm played an important role.  Define for $f\in\Lloc$:
\[
\candy[f] = \int_0^\infty x^{-2}|f(x)|\,dx\in[0,\infty].
\]
Also define a subspace $\DSpace_1\subset\DSpace$ by
\[
\DSpace_1 = \left\{
F\in\DSpace: \candy[F] \leq 1
\right\}.
\]
Using $\dcandy,$ the space $\DSpace_1$ becomes a complete metric space.
\begin{lemma}[Lemma of \cite{candy}]
\label{lem:complete_candy}
The metric space $(\DSpace_1,\dcandy)$ is complete.
\end{lemma}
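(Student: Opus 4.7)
Let me outline the proof. The plan is to pass from the Cauchy hypothesis to pointwise convergence on a subsequence via Helly's theorem, identify the candidate limit, and then upgrade back to convergence in $\dcandy$ using Fatou's lemma.

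First, I would observe that $\dcandy$ is a genuine metric on $\DSpace_1$: for any $F,G\in\DSpace_1$, the bound $\candy[F-G]\le \candy[F]+\candy[G]\le 2$ ensures the distance is finite, and $\dcandy(F,G)=0$ combined with the c\`adl\`ag property forces $F=G$. Then, given any Cauchy sequence $(F_n)\subset\DSpace_1$, I would note that for each $0<a<b<\infty$ we have the crude bound
\[
\int_a^b |F_n(x)-F_m(x)|\,dx \le b^2\int_a^b x^{-2}|F_n(x)-F_m(x)|\,dx \le b^2 \dcandy(F_n,F_m),
\]
so $(F_n)$ is Cauchy, hence convergent, in $L^1([a,b])$ for every such interval. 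In particular $(F_n)$ converges in $\Lloc$ to some limit $F$, and after passing to a subsequence $(F_{n_k})$ we may assume $F_{n_k}\to F$ almost everywhere on $(0,\infty)$.

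Next I would produce a c\`adl\`ag increasing representative of this limit. Since each $F_n$ is increasing and $[0,1]$-valued, Helly's selection theorem (applied with a further diagonal extraction) gives a sub-subsequence converging pointwise at every continuity point of some increasing c\`adl\`ag function $\tilde{F}:[0,\infty]\to[0,1]$. This $\tilde F$ must agree almost everywhere with the $\Lloc$-limit $F$; replacing $F$ by $\tilde F$ we obtain $F\in\DSpace$. The constraint $\candy[F]\le 1$ follows from Fatou applied to the almost-everywhere convergent subsequence:
\[
\candy[F] = \int_0^\infty x^{-2}F(x)\,dx \le \liminf_{k\to\infty}\int_0^\infty x^{-2}F_{n_k}(x)\,dx \le 1.
\]
(As a byproduct, the integrability near zero forces $F(0^+)=0$, consistent with membership in $\DSpace$.)

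Finally I would show $\dcandy(F_n,F)\to 0$ for the full original sequence. Fix $\varepsilon>0$ and choose $N$ so that $\dcandy(F_n,F_m)<\varepsilon$ for all $n,m\ge N$. For $n\ge N$, apply Fatou's lemma along the a.e.\ convergent subsequence $(F_{n_k})$:
\[
\dcandy(F_n,F) = \int_0^\infty x^{-2}|F_n(x)-F(x)|\,dx \le \liminf_{k\to\infty}\dcandy(F_n,F_{n_k}) \le \varepsilon.
\]
This gives $F_n\to F$ in $\dcandy$, completing the proof. The only step requiring any real care is the construction of the c\`adl\`ag representative; the potential obstacle is that $\Lloc$ convergence alone does not give a c\`adl\`ag limit, but the monotonicity of the $F_n$ together with Helly's theorem resolves this cleanly.
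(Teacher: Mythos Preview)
The paper does not prove this lemma; it is quoted verbatim from \cite{candy} without argument. Your proof is correct and follows the natural route (Helly for a c\`adl\`ag representative, Fatou once for membership in $\DSpace_1$, Fatou again along the a.e.\ subsequence to upgrade Cauchy to convergence). One minor imprecision: your displayed bound only controls $\int_a^b|F_n-F_m|$ for $a>0$, so the claim that $(F_n)$ converges in $\Lloc$ on $[0,\infty)$ as defined in the paper is not immediate from it; but this is harmless, since the remainder of your argument only uses a.e.\ convergence on $(0,\infty)$. (If you want the $\Lloc$ statement anyway, note that $\candy[F_n]\le 1$ together with monotonicity forces $F_n(x)\le x$, which controls $[0,a]$.)
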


We now define the space  $\mathcal B([0,\infty),\Lloc)$ of Borel measurable maps from $[0,\infty)$ to $\Lloc$. 
  We endow this space with the topology of locally uniform convergence, which we denote by the symbol $\Dto$. Then $\F^{(n)} \Dto \F$ as $n\to\infty$ if and only if for all compact $K \subseteq [0,\infty)$ and all $t > 0,$
\[
\lim_{n \to \infty} \sup_{0 \leq s \leq t} \int\limits_K | \F[s]^{(n)}(x) - \F[s](x) |\,dx = 0.
\]
The subspaces $\DTSpace,\DTSpace_1\subset\mathcal B([0,\infty),\Lloc)$ are defined by
\[
\DTSpace = \mathcal B([0,\infty),\DSpace),\quad \DTSpace_1 = \mathcal B([0,\infty),\DSpace_1) \subset \DTSpace.
\]
Since $\DSpace$ and $\DSpace_1$ are closed subsets of $\Lloc$, $\DTSpace$ and $\DTSpace_1$ are closed subsets of $\mathcal B([0,\infty),\Lloc)$.%

The spaces of continuous maps $C([0,\infty),\DSpace)$ and $C([0,\infty),\Lloc)$ are closed subsets of $\DTSpace$ and $B([0,\infty),\Lloc)$, respectively. Furthermore, the topology on these spaces can be metrized to make them complete separable metric spaces.

For measures $\mu$ and $\nu$, write $\mu\ll \nu$ if $\mu$ is absolutely continuous w.r.t.~$\nu$. Define a space $\RTSpace$ by
\[
  \RTSpace = \left\{ (\F,\G) \in \DTSpace \times \DTSpace : d\G[t] \ll d\F[t]\,\forall t\ge0 \right\}.
\]
For \emph{absolutely continuous} $\Psi$ we define an operator
\(
\COP \colon \RTSpace \to \DTSpace
\)
by the formula
\begin{align}\label{eq:cop}
  \COP( \F,\G)_t(x)
  = {\G[0]}(e^{-t}x) + \int_0^t (e^{s-t}x)^2 \left[\int_{e^{s-t}x}^\infty 
    \frac{d\G[s]}{d\F[s]}(z)\cdot \frac{d(\Psi({\F[s]}(z)))}{z}
  \right]\,ds
\end{align}
for all $t,x \geq 0.$ Note that the integral over $z$ is well defined since the Radon-Nikodym derivative $d\G[s]/d\F[s]$ is defined $d\F[s]$-almost surely and the measure $d\Psi\circ \F[s]$ is absolutely continuous w.r.t.~$d\F[s]$ (see Remark~\ref{rem:abscontinuity}).
This allows us to rewrite \eqref{eq:dynamics} as 
\(
\Aa = \COP( \A, \Aa) + \Ma.
\)

We would like this operator to be continuous. This is not true in general as a map from $\RTSpace \to \DTSpace.$  However, the operator is continuous at certain specific points.
\begin{proposition}
  \label{prop:continuity}
Let $(\F^n,\G^n)$ be a sequence in $\RTSpace$ and let $(\F,\G)\in\RTSpace$. Suppose furthermore
\begin{enumerate}
\item $\Psi \in C^1[0,1],$
\item $\F^n\Dto \F$ and $\G^n\Dto \G$,
\item the map $(t,x) \mapsto \F[t](x)$ is continuous,
\item $\left\{ \F[t]: t \geq 0 \right\}$ is a tight family of distribution functions.   
\end{enumerate}
Then, $(\F,\G)\in\RTSpace$ and
\[
\COP(\F^n,\G^n) \Dto \COP(\F,\G).
\]
\end{proposition}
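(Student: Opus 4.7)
My plan is to rewrite $\COP$ in the equivalent form from Remark~\ref{rem:abscontinuity}, then split the difference $\COP(\F^n,\G^n) - \COP(\F,\G)$ into pieces controlled respectively by the $L^1_{\mathrm{loc}}$ convergence of $\G^n$ and by the $C^1$ regularity of $\Psi$. Since $\Psi \in C^1[0,1]$ is absolutely continuous, the remark gives
\[
\COP(\F,\G)_r(x) = \G[0](e^{-r}x) + \int_0^r (e^{s-r}x)^2 \int_{e^{s-r}x}^\infty \bar\psi_{\F[s]}(z)\,\frac{d\G[s](z)}{z}\,ds,
\]
where $\bar\psi_F(z) := d(\Psi\circ F)/dF(z)$. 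A direct verification using the Lebesgue decomposition of $dF$ gives the pointwise representation $\bar\psi_F(z) = \int_0^1 \psi(F(z^-)+u[F(z)-F(z^-)])\,du$, so $|\bar\psi_F|\le \|\psi\|_\infty$ and $\bar\psi_F(z)=\psi(F(z))$ at any continuity point of $F$. Writing $F^n_s := (\F^n)_s$ and $G^n_s := (\G^n)_s$, the initial-condition piece $G^n_0(e^{-r}x)\to \G[0](e^{-r}x)$ in $L^1_{\mathrm{loc}}(dx)$ uniformly in $r\in[0,r_0]$ follows from $\G^n\Dto\G$ by a change of variable, so the remainder of the argument concerns the time-integral term.

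The main task is the pointwise (in $s,x$) convergence of the inner integral with $y := e^{s-r}x$,
\[
\int_y^\infty \bar\psi_{F^n_s}(z)\,\frac{dG^n_s(z)}{z} \longrightarrow \int_y^\infty \psi(\F[s](z))\,\frac{d\G[s](z)}{z}.
\]
I will split this into a weight-term $(\mathrm I) = \int_y^\infty [\bar\psi_{F^n_s}(z) - \psi(\F[s](z))]\,dG^n_s(z)/z$ and a measure-term $(\mathrm{II}) = \int_y^\infty \psi(\F[s](z))\,d(G^n_s - \G[s])(z)/z$. For (I), I first promote $\F^n\Dto\F$ to joint uniform convergence on compacta, $\sup_{s\le r_0,\,z\le M} |F^n_s(z) - \F[s](z)| \to 0$, by combining the joint continuity of $\F[s](z)$ (assumption~3), uniform tightness of $\{\F[s]\}$ (assumption~4), monotonicity of $F^n_s(\cdot)$, and a three-epsilon argument against a finite $z$-net of $[0,M]$ chosen from the uniform modulus of continuity of $\F$. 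This forces the jump heights of $F^n_s$ to vanish uniformly (since at any jump point $|F^n_s(z)-F^n_s(z^-)| \le 2\|F^n_s - \F[s]\|_\infty$), and uniform continuity of $\psi$ then yields $\sup_{s,z}|\bar\psi_{F^n_s}(z)-\psi(\F[s](z))| \to 0$; combined with uniform tightness of $\G^n$ used to truncate the tail, this bounds $(\mathrm I)$. For (II), the integrand $\phi_s(z) := \psi(\F[s](z))/z$ is continuous, jointly equicontinuous in $(s,z)$ on compacta, and vanishes as $z\to\infty$; approximating $\phi_s$ uniformly by $C^1_c$ test functions reduces the claim to $\sup_s |\int_K (G^n_s - \G[s])(z)\,g'(z)\,dz| \to 0$ for every $g \in C^1_c$, which is immediate from $\G^n\Dto\G$.

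Combining (I) and (II) gives convergence of the inner integral uniformly in $(s,x)$ on compacts bounded away from $\{x=0\}$. The $s$-integrand is dominated by $x e^{s-r}\|\psi\|_\infty$, so two successive applications of dominated convergence---in $s$, then in $x \in K$---conclude $\COP(\F^n,\G^n)\Dto \COP(\F,\G)$. The assertion $(\F,\G) \in \RTSpace$ is the subtlest abstract point: absolute continuity is not preserved by weak limits in general, but in applications of interest one has the pointwise bound $\G^n \le \F^n$, which passes to $\G \le \F$ in the limit, whence $d\G[t] \ll d\F[t]$ follows from continuity of $\F[t]$. The principal technical obstacle is the joint uniform convergence upgrade required for (I), with a secondary challenge being the uniform-in-$s$ weak convergence in (II), for which the joint equicontinuity of $\{\phi_s\}$ afforded by assumption~3 is essential.
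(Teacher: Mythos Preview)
Your approach is essentially the paper's: the same three-piece split (initial condition, Radon--Nikodym/weight term, weak-convergence/measure term), preceded by the same upgrade from $\Dto$ to uniform-in-$(s,z)$ convergence of $\F^n$ to $\F$ using continuity and tightness of $\F$. The paper packages the upgrade as a separate lemma (Lemma~\ref{lem:conv_upgrade}) and controls the weight term via the sandwich $\psi_\delta^-(\F^n_s(z)) \le \bar\psi_{\F^n_s}(z) \le \psi_\delta(\F^n_s(z))$ (Lemma~\ref{lem:rnderiv}), whereas you use the explicit jump-average formula for $\bar\psi_F$; both routes give the same essential-sup bound once jump heights of $\F^n_s$ are shown to vanish uniformly.

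One correction: you invoke ``uniform tightness of $\G^n$'' to truncate the tail in (I), but tightness of $\G^n$ is not among the hypotheses and is not needed. The point is that your uniform-convergence upgrade, done properly, holds for \emph{all} $z\ge 0$ (not just $z\le M$): tightness of $\{\F[t]\}$ forces $\F[s](z)$ and hence $\F^n_s(z)$ to be uniformly close to $1$ for large $z$, so the weight difference $|\bar\psi_{\F^n_s}(z)-\psi(\F[s](z))|$ is small uniformly in $(s,z)\in[0,T]\times[0,\infty)$. Then $\int_y^\infty dG^n_s(z)/z \le 1/y$ finishes (I) with no appeal to tightness of $\G^n$. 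This is how the paper handles it.

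Your remark that $(\F,\G)\in\RTSpace$ is not proved in full generality is well taken; the paper's proof does not address it either, and in the application (Proposition~\ref{prop:candyass}) the bounded Radon--Nikodym derivative is verified directly before invoking the proposition.
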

The proof of Proposition~\ref{prop:continuity} is postponed to Section~\ref{sec:proof_continuity}.

In the case that $\alpha = 1,$ \eqref{eq:dynamics} becomes 
\(
\A = \COP( \A, \A) + \M.
\)
In the notation of \cite{candy}, we write $\PDEOP : \DTSpace \to C( [0,\infty), \Lloc),$ which is given by $\PDEOP(\F) = \COP(\F,\F),$ so that $\A = \PDEOP(\A) + \M.$   
We also introduce the following family for $\PDEOP$:
\[
  \FPSpace = \{\F \in \DTSpace_1: \F = \PDEOP(\F), \forall t \geq 0: \F[t](+\infty) = 1 \text{ and } \{\F[t]\}_{t\geq 0} \text{ tight} \}.
\]
Here, we recall that a family of distribution functions $\{\F[\beta]\}_{\beta \in X}$ on $[0,\infty)$ is tight if for all $\epsilon > 0,$ there is an $N > 0$ sufficiently large such that, for every $\beta \in X,$ $\F[\beta](N) > 1-\epsilon.$

Of particular importance, as a consequence of Theorem~\ref{thm:candy} (more specifically, \cite[Lemma 3.5]{candy}), for all distributions $\Psi \in C^1[0,1],$ we have that there is a stationary fixed point $\F^* \in \FPSpace$ which has $F^*_t = F^{\Psi}$ for all $t \geq 0.$ Following the insight of \cite{Junge} and adapting his formalism, we additionally introduce the operator
\(
\COP^* \colon \DTSpace^* \to \DTSpace
\)
given by 
\begin{align*}
&\DTSpace^* = \{\F\in \DTSpace_1: d\F[t] \ll dF^\Psi\,\forall t\ge0,\,\sup_{t\ge0}\|\tfrac{d\F[t]}{dF^\Psi}\|_{\operatorname{L}^\infty(dF^\Psi)} < \infty\}\\
&\COP^*(\F) = \COP(\F^*,\F),\quad \F\in\DTSpace^*
\end{align*}
 and we introduce the family of fixed points
\[
  \CFPSpace = \{\F \in \DTSpace^*: \F = \COP^*(\F), \forall t \geq 0: \F[t](+\infty) = 1 \text{ and } \{\F[t]\}_{t\geq 0} \text{ tight} \}.
\]
The key contribution of this article is to show that in fact every solution in $\CFPSpace$ has the same $t \to \infty$ limit, $F^{\Psi},$ which we do by study of a related piecewise deterministic Markov process in Section~\ref{sec:growth-fragmentation}. The following proposition is a special case of Proposition~\ref{prop:convergence_RFPSpace}:
\begin{proposition}
  \label{prop:ergodicity}
  Let $\F \in \CFPSpace$. Then $d\F[t]$ converges in total variation to $dF^{\Psi}.$  Furthermore, if $\mathcal M$ is a tight family of probability distributions on $(0,\infty)$, the convergence is uniform on those $\F$ which furthermore satisfy $d\F[0] \in \mathcal M$.
\end{proposition}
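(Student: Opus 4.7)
The plan is to recognize any $\F \in \CFPSpace$ as a measure-valued solution of the growth-fragmentation equation \eqref{eq:RGa} with $R = \psi\circ F^\Psi$, represent this solution by the semigroup of the associated cell PDMP, and then apply the ergodicity theorem for that PDMP from Section~\ref{sec:growth-fragmentation}.

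First, I would unpack the fixed-point condition. Since $\F^*_s \equiv F^\Psi$ for every $s$, the definition $\COP^*(\F) = \COP(\F^*,\F)$ together with Remark~\ref{rem:abscontinuity} (applicable since $\Psi\in C^1[0,1]$ and $F^\Psi$ is absolutely continuous) reduces \eqref{eq:cop} to
\[
\F[t](x) = \F[0](e^{-t}x) + \int_0^t (e^{s-t}x)^2 \int_{e^{s-t}x}^\infty \frac{R(z)}{z}\,d\F[s](z)\,ds, \qquad R = \psi\circ F^\Psi,
\]
which is exactly \eqref{eq:RGa}. The hypotheses $\F[t](+\infty)=1$ and tightness of $\{\F[t]\}_{t\ge 0}$ ensure each $d\F[t]$ is a probability measure on $(0,\infty)$, while the essential boundedness of $d\F[t]/dF^\Psi$ encoded in $\DTSpace^*$ places the problem squarely in the $L^1(dF^\Psi)$ framework developed in the paper.

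Second, I would invoke the semigroup representation (Proposition~\ref{prop:rgb}): the unique solution of \eqref{eq:RGa} in this $L^1(dF^\Psi)$ class with initial datum $d\F[0]$ equals the time-$t$ law of the cell PDMP with growth rate $c(x) = x$, total jump rate $K(x) = xR(x)$, and post-jump density $2x/y^2$ on $(0,y)$, started from $d\F[0]$. The ergodicity statement of Section~\ref{sec:ergodicity}, of which this proposition is the $\CFPSpace$ specialization (through Proposition~\ref{prop:convergence_RFPSpace}), then yields total variation convergence $d\F[t]\to dF^\Psi$. Ergodicity itself is obtained from the regenerative structure of the PDMP: the process grows exponentially, jumps according to a size-biased kernel that forgets most of the memory of the pre-jump location, and a coupling at a suitably chosen renewal epoch washes out the initial condition. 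For the uniformity claim, one re-examines this coupling bound: the waiting time to the first useful regeneration depends on $d\F[0]$ only through a tail functional of the initial distribution. Tightness of $\mathcal{M}$ yields a uniform bound on this tail functional and hence a single total variation decay rate valid for all $d\F[0]\in\mathcal{M}$.

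The principal obstacle I anticipate is the semigroup representation step: matching the purely analytic class $\DTSpace^*$ to the probabilistic semigroup requires a uniqueness theorem for \eqref{eq:RGa} within the $L^1(dF^\Psi)$ class. Because $R$ is only assumed locally integrable---neither bounded nor continuous---the classical Feller/$C_b$ semigroup framework is unavailable, and one must build the $L^1(dF^\Psi)$ semigroup theory for the cell process directly, borrowing ideas from the analytic treatment of growth-fragmentation equations. Once this identification is secured, the convergence itself and its uniform version follow cleanly from the probabilistic regeneration argument.
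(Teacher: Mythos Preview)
Your proposal is correct and follows essentially the same route as the paper: identify $\COP^*$ with $\COPR$ for $R=\psi\circ F^\Psi$ so that $\CFPSpace\subset\RFPSpacea$, then invoke Proposition~\ref{prop:convergence_RFPSpace}, whose proof combines the weak-form identity (Proposition~\ref{prop:markov}), the $\Lpi$ uniqueness/representation result (Proposition~\ref{prop:uniqueness}, built on the core Proposition~\ref{prop:core}), and the regenerative ergodicity argument (Proposition~\ref{prop:ergodicity_gf}). The only minor slip is the label: the semigroup representation you describe is Proposition~\ref{prop:uniqueness}, not Proposition~\ref{prop:rgb} (the latter concerns only the transport part $\RGB$); and the paper's uniformity argument proceeds via Dini's theorem on $x\mapsto \dtv(\delta_xP_t,\pi)$ rather than a direct regeneration-time bound, though your heuristic is a valid alternative.
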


It is now relatively simple to complete the proof of Theorem~\ref{thm:main}.  We need the following simple facts about the dynamics which are easy adaptations of arguments in \cite{candy,Junge}.

\begin{proposition}
\label{prop:evo_prop}
For $\Psi \in C^1[0,1],$ and any $\alpha \in (0,1],$ the following hold almost surely:
\begin{enumerate}[(i)]
  \item The collection of distribution functions $\{\alpha^{-1}\Aa[t]\}_{t \geq 0}$ is tight. \label{p:tight}
  \item The family $\{\alpha^{-1}\Aan\}_{n \geq 0}$ defined by $\Aan[t] = \Aa[t+n]$ for every $t\ge0$ is \emph{asymptotically equicontinuous}, i.e.\,for any $K$ compact
\[
\lim_{\delta \to 0}
\limsup_{t_0 \to \infty}
\sup_{\substack{s,t \geq t_0 \\ |s-t| \leq \delta} } 
\int\limits_K \left| \Aa[s](x) - \Aa[t](x) \right|\,dx=0.
\] \label{p:aec}
\item The noise vanishes in the limit, i.e.~$\Man \Dto 0$ as $n \to \infty$, where $\Man[t] = \Ma[t+n] - T_t \Ma[n]$ for every $t\ge0$, where $T_tf(x) = f(e^tx).$ \label{p:noise}
\end{enumerate}
\end{proposition}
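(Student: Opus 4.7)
Plan for Proposition \ref{prop:evo_prop}. Each of the three claims is obtained by adapting the analogous result for $\A$ (the case $\alpha=1$) in \cite{candy,Junge}. The structural features making the adaptations routine are (a) the pointwise domination $\Aa[t]\le \A[t]$, a consequence of $g^\alpha_t = d\Aatilde[t]/d\Atilde[t]\in[0,1]$; (b) the mass conservation $\Aa[t](\infty)=\alpha$ for all $t\ge0$; and (c) the $C^1$ regularity of $\Psi$, which ensures $d(\Psi\circ\A[s])$ has bounded density $\psi=\Psi'$ with respect to $d\A[s]$. I would prove (i), (iii), (ii) in that order, since (ii) depends on (iii).

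For (i), Theorem~\ref{thm:candy} gives $\A[t]\to F^\Psi$ at every continuity point almost surely, so $\{\A[t]\}_{t\ge 0}$ is tight. Given $\varepsilon>0$, pick a continuity point $N$ of $F^\Psi$ with $\A[t](N)>1-\alpha\varepsilon$ uniformly in $t\ge 0$; by (a),
\[
\alpha - \Aa[t](N) = \int_N^\infty d\Aa[t] \;\le\; \int_N^\infty d\A[t] = 1 - \A[t](N) \;<\; \alpha\varepsilon,
\]
so $\alpha^{-1}\Aa[t](N) > 1 - \varepsilon$ uniformly in $t$, proving tightness of $\{\alpha^{-1}\Aa[t]\}$.

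For (iii), $\Matilde(y)$ is the compensation of a pure-jump process whose jumps $\tilde{B}^\alpha_s(u,v,y)$ are supported in $\{\ell_s(u)>y\}$. A direct computation using $\int_0^1 h(v,\ell,y)^2\,dv = \tfrac{2}{3}(y/\ell)^3$ for $\ell>y$, the bound $k^\alpha_s\le 1$, and (c) yields the predictable quadratic-variation estimate
\[
\langle \Matilde(y)\rangle_t \;\le\; C\,\|\psi\|_\infty\, y^2\int_0^t e^s\bigl(1-\Atilde[s](y)\bigr)\,ds.
\]
Specialising to $\Man[t](x)=\Matilde[t+n](y)-\Matilde[n](y)$ at the appropriately rescaled argument $y$, and using the tightness and convergence of $\{\A[s]\}_{s\ge n}$ from Theorem~\ref{thm:candy} to control $1-\Atilde[s]$ uniformly in $s$ large, the variance of $\Man[t](x)$ tends to $0$ and yields $L^2$-convergence of $\Man$ to $0$ in the $\dblah$-metric uniformly on compact $t$-intervals as $n\to\infty$. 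Doob's $L^2$ maximal inequality and Borel--Cantelli along a geometrically spaced subsequence of $n$ then upgrade this to the a.s.\,convergence $\Man\Dto 0$.

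For (ii), apply \eqref{eq:dynamics} at $s$ and $t$ with $|s-t|\le\delta$. The pointwise bound $(e^{r-t}x)^2 \int_{e^{r-t}x}^\infty \tfrac{d\Aa[r]}{d\A[r]}(z)\,\tfrac{d(\Psi\circ\A[r])(z)}{z} \le \|\psi\|_\infty\, e^{r-t}x$, which follows from (c), $d\Aa[r]/d\A[r]\le 1$ by (a), and $\int_y^\infty d\A[r](z)/z\le 1/y$, shows the drift integrand has a uniformly bounded envelope. Adapting the calculation in \cite{candy} for the analogous equicontinuity of $\A$, the difference of the drift integrals at $s$ and $t$ is $O(\delta x)$ pointwise and hence $O(\delta)$ after integration on any compact $K$. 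The martingale difference $\Ma[s]-\Ma[t]$ vanishes as $s,t\to\infty$ by (iii), completing the asymptotic equicontinuity. The main obstacle is part (iii): the quadratic-variation bound must be combined with careful control of the tails of $\A[s]$ uniformly in $s$ large, through Theorem~\ref{thm:candy}, exactly as in the analysis of $\M$ in \cite{candy,Junge}; parts (i) and (ii) are then routine given (iii) and (c).
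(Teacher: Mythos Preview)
Your proposal is correct and follows the same approach as the paper: the paper's own proof is the single sentence ``This is a simple modification of \cite[Proposition 7.2]{candy} or \cite[Proposition 5]{Junge},'' and your outline is precisely a sketch of how those arguments adapt to general $\alpha\in(0,1]$ via the domination $\Aa[t]\le\A[t]$, the bound $d\Aa[t]/d\A[t]\le 1$, and the $C^1$ control on $\Psi$. You have supplied more detail than the paper does, but the route is the same.
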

\begin{proof}
  This is a simple modification of \cite[Proposition 7.2]{candy} or \cite[Proposition 5]{Junge}.
\end{proof}
With these properties in hand, it is now elementary to prove the following convergence:
\begin{proposition}
  \label{prop:candyass}
For each $\alpha \in [0,1],$ we have that 
\[
  \lim_{t \to \infty} \Aa[t] = \alpha F^{\Psi}
\]
almost surely.
\end{proposition}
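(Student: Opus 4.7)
I argue by compactness and contradiction. Suppose $\dblah(\Aa[t_n], \alpha F^\Psi) > \delta$ along some $t_n \to \infty$ and $\delta > 0$. The strategy is to shift the process backward by a fixed amount $T$ chosen using the \emph{uniform} form of Proposition~\ref{prop:ergodicity}, so that any $\Dto$-subsequential limit $\G$ of the shifted trajectories satisfies $\G/\alpha \in \CFPSpace$; uniform ergodicity then forces $G_T$---which by construction equals the $\dblah$-limit of $\Aa[t_{n_k}]$---to lie within $\delta/2$ of $\alpha F^\Psi$, yielding the contradiction.

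\textbf{Step 1: choose $T$ and extract a subsequence.} By Proposition~\ref{prop:evo_prop}(\ref{p:tight}), the set $\mathcal M := \overline{\{\alpha^{-1}d\Aa[t] : t\ge 0\}}$ is a tight family of probability measures on $(0,\infty)$. Since TV distance dominates $\dblah$, Proposition~\ref{prop:ergodicity} applied to $\mathcal M$ supplies some $T > 0$ such that $\dblah(\alpha F'_T, \alpha F^\Psi) < \delta/2$ for every $\F' \in \CFPSpace$ with $dF'_0 \in \mathcal M$. Setting $F^{(n)}_\tau := \Aa[t_n - T + \tau]$ and $A^{(n)}_\tau := \A[t_n - T + \tau]$, Proposition~\ref{prop:evo_prop}(\ref{p:tight})--(\ref{p:aec}) and compactness of $(\DSpace,\dblah)$ yield, via Arzela--Ascoli, a subsequence $n_k$ with $\F^{(n_k)} \Dto \G$ for some time-continuous $\G$; likewise $\A^{(n_k)} \Dto \F^*$ (with $F^*_\tau \equiv F^\Psi$) by Theorem~\ref{thm:candy} and asymptotic equicontinuity of $\A$.

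\textbf{Step 2: identify the limit and conclude.} Since $F^\Psi$ is continuous in $(t,x)$ (trivially, being independent of $t$) and $\{F^\Psi\}$ is tight, Proposition~\ref{prop:continuity} applies and gives $\COP(\A^{(n_k)}, \F^{(n_k)}) \Dto \COP^*(\G)$. A time-shifted version of~\eqref{eq:dynamics} produces $\F^{(n)} = \COP(\A^{(n)}, \F^{(n)}) + \mathcal E^{(n)}$ with $\mathcal E^{(n)} \Dto 0$ (by the argument behind Proposition~\ref{prop:evo_prop}(\ref{p:noise})), and passing to the limit yields $\G = \COP^*(\G)$. The identities $\Aa[t](+\infty) = \alpha$ and $d\Aa[t] \le d\A[t]$ propagate to the limit, so $\G/\alpha$ lies in $\CFPSpace$ with $dG_0/\alpha \in \mathcal M$. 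By the choice of $T$, $\dblah(G_T, \alpha F^\Psi) < \delta/2$; since $F^{(n_k)}_T = \Aa[t_{n_k}] \to G_T$ in $\dblah$, this contradicts $\dblah(\Aa[t_n], \alpha F^\Psi) > \delta$. Hence $\dblah(\Aa[t], \alpha F^\Psi) \to 0$, and continuity of $F^\Psi$ combined with Lemma~\ref{lem:L1loc_pointwise} upgrades this to pointwise convergence.

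\textbf{Main obstacle.} The crux is the fixed-point identification $\G = \COP^*(\G)$. It requires carefully deriving the time-shifted semimartingale decomposition of $\Aa$ from~\eqref{eq:dynamics} and verifying the $\Dto$-negligibility of its martingale residual (by the same argument as for Proposition~\ref{prop:evo_prop}(\ref{p:noise})), together with checking that Proposition~\ref{prop:continuity} applies in the limit---in particular that $(\F^*, \G) \in \RTSpace$, i.e.\ that $dG_\tau \ll dF^\Psi$ with Radon--Nikodym derivative bounded by $1$. The latter follows from the pointwise bound $d\Aa[t] \le d\A[t]$ together with $A^{(n_k)}_\tau \to F^\Psi$ at continuity points of $F^\Psi$. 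The remaining compactness and tightness ingredients are supplied directly by Proposition~\ref{prop:evo_prop}.
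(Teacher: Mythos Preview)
Your proposal is correct and follows essentially the same route as the paper: shift back by a time $T$ chosen via the uniform ergodicity of Proposition~\ref{prop:ergodicity} over the tight family of (rescaled) initial conditions, extract a $\Dto$-convergent subsequence using asymptotic equicontinuity, identify the limit as a fixed point of $\COP^*$ via Proposition~\ref{prop:continuity} and the vanishing of the martingale part, verify membership in $\CFPSpace$ from the Radon--Nikodym bound $d\Aa[t]\le d\A[t]$, and conclude. The only cosmetic difference is that you phrase the argument by contradiction whereas the paper shows directly that every sequence has a convergent subsequence; also, for tightness on $(0,\infty)$ (no mass escaping to $0$) the paper additionally invokes the uniform bound $\candy[\alpha^{-1}\Aa[t]]\le \alpha^{-1}e^{-t}N_t<\infty$, which you should make explicit.
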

\begin{proof}
Let $\alpha\in(0,1]$ (the case $\alpha=0$ is trivial).
Let $\left\{ t_k \right\}_{k=0}^\infty$ be an arbitrary sequence of times converging to $\infty$. It is enough to show that $\dblah(\alpha^{-1} \Aa[t_k],F^{\Psi})\to0$ along a subsequence. Fix  $\epsilon > 0$. Let $\mathcal{H}$ be the $\dblah$ closure of $\left\{ \alpha^{-1}\Aa[t] \right\}_{t \geq 0}.$  From Proposition~\ref{prop:evo_prop}(\ref{p:tight}) and the fact that for each $F \in \mathcal{H},$ 
  \[
    \candy[F] \leq \sup_{t \geq 0}\alpha^{-1}e^{-t}N_t < \infty,
  \]
  we have that $\mathcal{H}$ is tight as a family of distributions on $\left( 0,\infty \right).$  As a consequence,   Proposition~\ref{prop:ergodicity}  yields that there exists $T>0$, such that
  \begin{equation}
  \label{eq:suplate}
     \sup_{\F\in\CFPSpace: \F[0] \in \mathcal H}\dtv(d\F[T],\pi) < \epsilon.
  \end{equation}

Define $n_k = t_k - T$ for all $k \in\mathbb{N}$  (assume w.l.o.g.~that $t_k\ge T$ for all $k$).  For $s\ge0$, let $\Aas = (\Aas[t])_{t \geq 0}=(\Aa[t+s])_{t \geq 0}$ be the shifted process. By Proposition~\ref{prop:evo_prop}(\ref{p:aec}),  the family $\left\{ \Aank \right\}_{0}^\infty$  is asymptotically equicontinuous.  Hence by \cite[Lemma 7.3]{candy}, we may  extract a subsequence $\left\{ \Aankt \right\}_{0}^\infty$ which converges in $\DTSpace$ to $\alpha \F^{\alpha}$ for some $\F^{\alpha}$. In particular, with $ t'_k =  n'_k + T$, we have for all $k$ sufficiently large,
\begin{equation}
\label{eq:itslate}
\dblah(\alpha^{-1} \Aa[ t'_k], \F[T]^{\alpha}) =
\dblah(\alpha^{-1} \Aankt[T], \F[T]^{\alpha}) < \epsilon.
\end{equation}
Furthermore, we have 
  \[
    \alpha^{-1}\Aankt = \COP\left( 
    \Ankt, \alpha^{-1}\Aankt
    \right)
    + \Mankt \Dto \F^{\alpha}.
  \]
  
Since $\Aa[t]$ has Radon-Nikodym derivative bounded by 1 w.r.t.~$\A[t]$ for every $t\ge0$ and $\Ankt \to \F^*$ as $k\to\infty$, $\F[t]^\alpha$ has Radon-Nikodym derivative bounded by $\alpha^{-1}$ w.r.t.~$F^\Psi$ for every $t\ge0$. Hence, $\F^\alpha \in \DTSpace^*$ and $(\F^*,\F^\alpha)\in\mathcal R$.
  
  By Proposition~\ref{prop:evo_prop}(\ref{p:noise}), $\Mankt \Dto 0.$  By Theorem~\ref{thm:candy}, we have that $\A[t] \to F^{\Psi}$ almost surely, and hence $\Ankt \Dto \F^*,$ which is jointly continuous in time and space.  Then by Proposition~\ref{prop:continuity}, we have that 
  \[
    \F^{\alpha} = \COP(\F^*, \F^{\alpha}) = \COP^*(\F^{\alpha}).
  \]
By definition of the space $\mathcal H$, $\F[t]^\alpha \in \mathcal H$ for every $t\ge0$, in particular, the family $\{\F[t]^\alpha\}_{t\geq0}$ is a tight family of distribution functions of probability measures, and so $\F^\alpha\in \CFPSpace$. 
Hence, \eqref{eq:suplate} implies
\begin{equation}
\label{eq:itsverylate}
\dtv\left( dF_T^\alpha, \pi \right) < \epsilon.
\end{equation}

The $\dblah$ distance of two distribution functions on $[0,\infty)$ can easily be estimated in terms of the total variation distance of the underlying measures. By \eqref{eq:itslate} and \eqref{eq:itsverylate}, we then get that there is some monotone increasing function $f : \R^+ \to \R^+$ with $\lim_{x \to 0}f(x) = 0$ so that 
  \(
\dblah(\alpha^{-1} \Aa[t'_k],\pi) < \epsilon + f(\epsilon)
\).
As $\epsilon$ can be made arbitrarily small, the proof is complete. 
\end{proof}

\begin{proof}[Proof of Theorem~\ref{thm:main}]
Recall that we wish to show that $e^{-t}N_t^{\alpha}\to \alpha$ almost surely as $t\to\infty$, for every $\alpha\in[0,1]$. The key is the following chain of inequalities:
\begin{align}
\label{eq:sandwich}
  e^{-t}(N_t^{\alpha}-1) \leq \candy[\Aa[t]] \leq e^{-t}N_t^{\alpha},
\end{align}
which are easy consequences of the following identity for size-biased distribution functions \cite[Lemma 2.2]{candy}:
\[
  \candy[F] = \int_0^\infty x^{-1} dF(x),
\]
This identity furthermore implies $\candy[\A[t]] = e^{-t}N_t$. 

Fix $\alpha\in[0,1]$. Recall that $\candy[F^{\Psi}] = 1$ \cite[Lemma~3.5]{candy}. Together with Proposition \ref{prop:candyass} and Fatou's lemma, we have
  \[
    \liminf_{t\to\infty} \candy[\Aa[t]] \geq \alpha
  \]
  almost surely. By symmetry under reversing the interval $[0,1],$ it also follows that $\liminf_{t\to\infty} \candy[\Aaa[t]] \geq (1-\alpha).$  However, by the above, almost surely,
  \[
    \lim_{t \to \infty} \candy[\Aa[t]]+\candy[\Aaa[t]]
    =
    \lim_{t \to \infty} \candy[\A[t]] 
    =
    \lim_{t \to \infty} e^{-t}N_t  
    = 1,
  \]
  whence
  \[
    \lim_{t\to\infty} \candy[\Aa[t]] = \alpha
  \]
  almost surely. Equation~\eqref{eq:sandwich} now gives
  \[
      \lim_{t\to\infty} e^{-t}N_t^{\alpha} = \alpha,
\] 
which was to be proven.
\end{proof}

\section{The cell process}
\label{sec:growth-fragmentation}

This section can be read independently of the remainder of the article. Its goal is to prove Proposition~\ref{prop:ergodicity}, but in order to simplify notation and to make logical dependencies clearer we state a more general result, Proposition~\ref{prop:convergence_RFPSpace}. Throughout the section, we fix a non-negative function $R\in \Lloc.$  For the application to $\Psi$--processes, we will set $R = \psi \circ F^{\Psi}$, however, the results established in this section will not require any boundedness or continuity properties of the function $R$.
Define the operator $\COPR$ given by
\[
\COPR(\F)_t(x)
= {\F[0]}(e^{-t}x) + \int_0^t (e^{s-t}x)^2 \left[\int_{e^{s-t}x}^\infty \frac{R(z)}{z}\,d\F[s](z)\right]\,ds,
\]
for all $\F \in \DTSpace_{\COPR} = \{\F\in\DTSpace: d\F[s] \ll \mathrm{Leb}\}$, where $\mathrm{Leb}$ is Lebesgue measure on $\R_+$. In parallel with previous definitions, define 
\[
  \RFPSpace = \{\F \in \DTSpace_{\COPR}: \F = \COPR(\F), \forall t \geq 0: \F[t](+\infty) = 1,\ \F[t](0)=0 \text{ and }
  \{\F[t]\}_{t\geq 0} \text{ tight} \}.
\]

Throughout the section, we will work under the following assumption, which will be satisfied in the cases in which we are concerned.
\begin{assumption}\ 
  \label{ass:stat}
There is a stationary element $\F \in \RFPSpace,$ that is $\F[t] = F$ for some distribution function $F$ and all $t \geq 0.$ Then $F$ is the distribution function of a probability measure, which we denote by $\pi$.
\end{assumption}
\noindent This assumption is equivalent to \eqref{eq:ZR}, as we shall show below in Lemma \ref{lem:R}.

We also define a subspace of $\RFPSpace$ by
\[
  \RFPSpacea = \{\F \in \RFPSpace: \sup_{t \geq 0} \| \tfrac{d\F[t]}{d\pi}\|_{\operatorname{L}^\infty(\pi)} < \infty \}.
\]
These naturally arise in our application.  Our goal in this section is to show the following proposition:
\begin{proposition}
\label{prop:convergence_RFPSpace}
Under Assumption~\ref{ass:stat}, every $\F\in\RFPSpacea$ satisfies 
$$
\dtv(d\F[t],\pi)\to 0,\quad\text{as $t\to\infty$.}
$$
Furthermore, if $\mathcal M$ is a tight family of probability distributions on $(0,\infty)$, the convergence is uniform on the set $\{\F\in\RFPSpacea: d\F[0] \in \mathcal M\}$.
\end{proposition}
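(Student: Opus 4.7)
The strategy is to interpret the fixed-point equation $\F = \COPR(\F)$ as the mild form of the forward Kolmogorov equation of an underlying \emph{cell process} $X_t$: a PDMP on $(0,\infty)$ with deterministic exponential growth $\dot X_t = X_t$ between jumps, jump rate $xR(x)$ at position $x$, and post-jump distribution $Q(y,\cdot)$ having density $2x/y^2$ on $(0,y)$. Under this interpretation, $\F[t]$ is the law of $X_t$ starting from $d\F[0]$, and $\pi$ from Assumption~\ref{ass:stat} is the $X_t$-invariant measure; the proposition reduces to $L^1(\pi)$-ergodicity of this PDMP.

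First I would construct $X_t$ as a strong Markov process under the sole assumption that $R$ is non-negative and locally integrable. Non-explosion is delicate because $xR(x)$ is unbounded, but it follows from the fact that every jump pushes the process strictly below its current position (since $Q(y,\cdot)$ is supported in $(0,y)$), which allows for a direct Borel--Cantelli argument controlling the number of jumps before a fixed time. Next, I would develop the contraction semigroup $(P_t)_{t\ge 0}$ of $X_t$ on $\Lpi$ and prove the representation $d\F[t]/d\pi = P_t(d\F[0]/d\pi)$ for every $\F\in\RFPSpacea$. The essential ingredient here is a uniqueness theorem for \eqref{eq:RGa} within the class $\RFPSpacea$, which I would obtain via a Gronwall argument applied to the difference of two mild solutions; the $L^\infty(\pi)$-bound on the Radon--Nikodym derivative that is built into the definition of $\RFPSpacea$ is what makes the Gronwall inequality tractable and allows us to bypass the $C_b$-setting discussed in the introduction.

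The core probabilistic step is ergodicity of $X_t$: for every $h\in\Lpi$,
\[
\left\| P_t h - \int h\,d\pi \right\|_{\Lpi} \longrightarrow 0 \quad \text{as } t\to\infty.
\]
Because no regularity of $R$ is assumed, existing PDMP ergodicity theorems do not apply directly; instead I would derive Harris recurrence from the smoothing built into the jump kernel $Q$ itself. Concretely: fix a compact $K=[a,b]\subset(0,\infty)$ with $\pi(K)>0$; show that $X_t$ returns to $K$ infinitely often almost surely, using $Z^R<\infty$ to rule out a one-sided drift to $0$ or $\infty$; and show that, after two successive jumps, the conditional law of $X_t$ admits an absolutely continuous minorant $\nu$ with $\nu(K)>0$, uniformly in the pre-entry history. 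This minorant comes from the explicit density $2x/y^2$ of $Q$ composed with the deterministic flow. The resulting Doeblin-type minorization yields a classical regenerative structure, from which Orey's theorem delivers the desired $L^1(\pi)$-convergence.

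Applied with $h_0 = d\F[0]/d\pi$, which integrates to $\F[0](\infty)=1$ against $\pi$, ergodicity gives $\dtv(d\F[t],\pi) = \tfrac12\|P_t h_0 - 1\|_{\Lpi}\to 0$, proving the first assertion. For uniformity on $\{d\F[0]\in\mathcal M\}$, I would combine tightness of $\mathcal M$ with the $L^\infty(\pi)$-bound on $d\F[0]/d\pi$ to deduce uniform integrability of the family of initial densities in $\Lpi$, on which the $L^1$-ergodic theorem converges uniformly. I expect the central obstacle to be the Doeblin minorization in the third paragraph: no regularity of $R$ is available, so \emph{all} the smoothing must come from the post-jump kernel, and obtaining a uniform lower bound on the returning density on $K$ requires a careful but elementary probabilistic argument that tracks the post-jump density through a bounded number of further jump-and-grow cycles.
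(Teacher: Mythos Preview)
Your overall architecture matches the paper's: construct the cell PDMP, identify $d\F[t]$ with $\mu_0 P_t$ via a uniqueness statement, and then prove ergodicity of $(P_t)$. The differences lie in how you propose to carry out the last two steps, and one of those differences hides a genuine gap.

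\textbf{Uniqueness.} You propose a Gronwall argument on the difference of two mild solutions. The paper instead works in $\Lpi$, shows $(P_t)$ is a strongly continuous contraction semigroup there, and proves that $C_c^1(0,\infty)$ is a \emph{core} for its generator by decomposing $\RG = \RGB + \RGK$ with $\RGK$ bounded on $\Lpi$ and $\RGB$ admitting an explicit resolvent with norm $<(2+\lambda)^{-1}$. This is not gratuitous: with $R$ merely locally integrable, the kernel $R(z)/z$ appearing in the mild equation is not bounded, and a naive Gronwall estimate on $\|F_t - G_t\|$ does not close without exploiting the identity $\int_y^\infty R(z)z^{-1}\,d\pi(z) = F'(y)/y$ (i.e.\ \eqref{eq:Fprime}) in an essential way. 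Your sketch does not indicate how the $L^\infty(\pi)$ bound on the density would convert into a linear integral inequality; it may be possible, but it is not the routine step you suggest.

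\textbf{Ergodicity.} You aim for a Doeblin minorization built from the smoothing of the jump kernel after two jumps. The paper takes a shorter route that exploits a structural feature you do not use: since the flow $t\mapsto xe^t$ is continuous and increasing, \emph{every point above the current position is hit with positive probability before the first jump} (because $S_x(t)>0$ for all $t$). Hence the fixed point $1$ is a regeneration point; Lemma~\ref{lem:H_abs_cont} shows the return time to $1$ has a nonzero absolutely continuous part, and Thorisson's coupling theorem for regenerative processes finishes. This sidesteps any need to produce a uniform density lower bound through the irregular jump rate $R$, which is exactly the obstacle you flag.

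\textbf{Uniformity --- a gap.} Your argument for uniform convergence over $\{\F\in\RFPSpacea: d\F[0]\in\mathcal M\}$ claims that tightness of $\mathcal M$ together with the $L^\infty(\pi)$ bound on $d\F[0]/d\pi$ yields uniform integrability of the initial densities. But the $L^\infty(\pi)$ bound in the definition of $\RFPSpacea$ is \emph{per trajectory}, not uniform over the family; nothing prevents $\|d\F[0]/d\pi\|_{L^\infty(\pi)}$ from being arbitrarily large as $\F$ ranges over the set, and tightness of $\mathcal M$ alone does not imply uniform integrability of the densities (consider measures concentrating mass on shrinking intervals inside a fixed compact). The paper avoids this by working pointwise: set $m_t(x)=\dtv(\delta_x P_t,\pi)$, show $m_t$ is monotone decreasing in $t$ and (via Lemma~\ref{lem:couple_small_time}) continuous in $x$, apply Dini to get uniform convergence on compacts, and then bound $\dtv(\nu P_t,\pi)\le \int m_t\,d\nu \le \sup_K m_t + \nu(K^c)$ for $\nu\in\mathcal M$.
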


We now describe the global structure of the proof, which is split over several subsections. Let us first define the operator  $\RG: C_c^1(0,\infty) \to \Lpi$ by,
\begin{equation}
\label{eq:RG}
  \RG f(x) = xf'(x) + xR(x)\left[\int_0^x\frac{2u}{x^2}f(u)\,du - f(x)\right],
\end{equation}
where the fact that $\RG f \in \Lpi$ easily follows from \eqref{eq:integral_R}.
We start by showing in Section~\ref{sec:diff_mu} that for every $F\in \RFPSpace$, the family of measures $\mu_t = d\F[t]$ solves the equation
\begin{equation}
\label{eq:diff_mu}
	(\mu_t,f) = (\mu_0,f) + \int_0^t 
	\left( \mu_s,\RG f  \right)\,ds,\quad \forall f \in C_c^1(0,\infty),\ \forall t \geq 0,
\end{equation}
where $(\mu,f):= \int f\,d\mu$ for every $\mu$ and $f$.
This motivates Section~\ref{sec:construction_markov_process}, where we construct a piecewise deterministic Markov process, \emph{the cell process}, and identify $\RG$ as its infinitesimal generator (in a certain sense). This process can be seen as describing the evolution of the tagged fragment in a certain growth--fragmentation process (see \cite{Bertoin}), but we will not exploit this relation further. We also show in that section that $\pi$ is an invariant measure of the Markov process. 

The goal of the next section, Section~\ref{sec:Lpi_and_uniqueness}, is to show that every solution $(\mu_t)_{t\ge0}$ to \eqref{eq:diff_mu} satisfies $\mu_t = \mu_0 P_t$, i.e.~that $\mu_t$ is the law of the Markov process at time $t$, when the starting point is distributed according to $\mu_0$. The arguments required for this are of analytic nature. We first show that the transition operator of the Markov process defines a strongly continuous contraction semigroup $(P_t)_{t\ge0}$ on $\Lpi$. The main part is then to prove that the space $C_c^1(0,\infty)$ is a core of the domain of its generator. For this, we adapt ideas from the growth-fragmentation literature (especially the decomposition approach of \cite{Mischler2016}). This then allows to prove that every solution $(\mu_t)_{t\ge0}$ to \eqref{eq:diff_mu} is uniquely determined from its initial condition and indeed given by $\mu_t = \mu_0P_t$.

Having established that $\mu_t$ is the law of the Markov process at time $t$ starting from the law $\mu_0$, the last step is to show that this process is ergodic. This is done in Section~\ref{sec:ergodicity} using purely probabilistic arguments based on coupling of regenerative processes. Again, no additional assumptions on $R$ are necessary, ergodicity follows from the mere existence of a stationary probability, together with certain irreducibility properties. In particular, no Lyapunov functions are used.  With everything in place, Section~\ref{sec:proof_prop} wraps up the proof of Proposition~\ref{prop:convergence_RFPSpace}.

To finish this section, let us collect some consequences of Assumption~\ref{ass:stat} which will be of use later. First, under Assumption~\ref{ass:stat}, it is easy to check that $F$ is in fact continuously differentiable in $(0,\infty)$ and satisfies the identity
  \begin{equation}
  \label{eq:Fprime}
    \frac{F'(x)}{x} = \int_x^\infty R(z)\,\frac{F'(z)}{z}\,dz
  \end{equation}
  for  every $x > 0.$ Solving \eqref{eq:Fprime} for $F'(x)/x$ (see also the  proof of Corollary 9.3 in \cite{candy}) then shows that for every $0<x\le y<\infty$,
\begin{equation}
\label{eq:Fprime_exp}
\frac{F'(y)}{y} = \frac{F'(x)}{x} \exp\left(-\int_x^y R(z)\,dz\right).
\end{equation} 
   In particular, since $F'$ cannot be zero everywhere by the assumptions on $F$, $F'(x)> 0$ for all $x\in(0,\infty)$. In particular, $\pi$ is equivalent to Lebesgue measure on $(0,\infty)$.
 
\begin{lemma} \label{lem:R}
  Assumption \ref{ass:stat} holds if and only if \eqref{eq:ZR} holds.
\end{lemma}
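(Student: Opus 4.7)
The plan is to read the equivalence almost directly off \eqref{eq:Fprime_exp}, which is already established in the paragraphs just above the lemma.

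For the implication Assumption~\ref{ass:stat} $\Rightarrow$ \eqref{eq:ZR}: apply \eqref{eq:Fprime_exp} with $x=1$ to obtain $F'(y) = F'(1)\, y \exp\bigl(-\int_1^y R(z)\,dz\bigr)$ for every $y>0$. Since $F$ is a distribution function with $F(0)=0$ and $F(\infty)=1$ (both inherited from the definition of $\RFPSpace$), integrating from $0$ to $\infty$ yields $1 = F'(1)\cdot Z^R$. In particular, $Z^R = 1/F'(1) < \infty$, and $F'(1) > 0$ by the preceding observation that $F'>0$ on $(0,\infty)$.

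For the converse, I would take as candidate
$$F(x) := \frac{1}{Z^R}\int_0^x y\exp\left(-\int_1^y R(z)\,dz\right)dy, \qquad \F[t] \equiv F,$$
and verify the defining properties of $\RFPSpace$. Every condition except $\F = \COPR(\F)$ is immediate: $F$ is absolutely continuous with strictly positive density, satisfies $F(0)=0$ and $F(\infty)=1$ by the definition of $Z^R$, and the constant family $\{F\}_{t\ge 0}$ is tight. To check the fixed-point equation, I first verify that $F$ satisfies \eqref{eq:Fprime}. Writing $g(y) := F'(y)/y = (Z^R)^{-1}\exp\bigl(-\int_1^y R(z)\,dz\bigr)$, this reduces to the identity $g(y) = \int_y^\infty R(z)\,g(z)\,dz$, which is just the integrated form of the ODE $g'=-Rg$ together with the boundary condition $g(\infty)=0$. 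The boundary condition holds because $R\ge 0$ makes $g$ nonincreasing, so $g$ has a limit at infinity; a positive limit would force $\int_0^\infty y\, g(y)\,dy = \infty$, contradicting $Z^R<\infty$. Given \eqref{eq:Fprime}, the change of variables $u = e^{s-t}x$ (so $ds = du/u$) in the integral defining $\COPR(\F)_t(x)$ reduces it to
$$\COPR(\F)_t(x) = F(e^{-t}x) + \int_{e^{-t}x}^{x} u \int_u^\infty R(z)\frac{F'(z)}{z}\,dz\,du = F(e^{-t}x) + \int_{e^{-t}x}^{x} F'(u)\,du = F(x),$$
establishing stationarity.

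There is no real obstacle here; the content of the lemma is simply the bookkeeping around the ODE $g' = -Rg$. The only point requiring a sentence of care is the extraction of $g(\infty)=0$ from $Z^R < \infty$, which follows immediately from the monotonicity of $g$ afforded by $R \ge 0$.
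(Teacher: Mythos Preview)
Your proof is correct and follows the same approach as the paper: the forward direction is identical (apply \eqref{eq:Fprime_exp} at $x=1$ and integrate), and for the converse you take the same candidate $F$ and simply carry out in full the verification that the paper dismisses as ``elementary to see.'' Your extra care in extracting $g(\infty)=0$ from $Z^R<\infty$ via monotonicity, and the change of variables reducing $\COPR(\F)_t(x)$ to $F(x)$, are exactly the details the paper omits.
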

\noindent We shall not use this Lemma in the development that follows. It is only provided to show that \ref{ass:stat} is a relatively simple condition.
\begin{proof}
  Taking $x=1$ in \eqref{eq:Fprime_exp} shows that
  \[
    F'(y) = y F'(1)\exp\left(-\int_1^y R(z)\,dz\right)
  \]
  Hence on integrating both sides over all $y$, we conclude that
  \[
    1 = F'(1) \int_0^\infty y \exp\left(-\int_1^y R(z)\,dz\right)\,dy
    = F'(1)Z^R.
  \]
  As $F'$ is everywhere positive, it follows that $Z^R < \infty.$

  On the other hand, if $Z^R < \infty,$ then we can define the probability density
  \[
    f(x) = x\exp\left(-\int_1^x R(y)\,dy\right)/Z^R, \quad \text{ for all } x > 0.
  \]
  Letting $F$ be its cumulative distribution function, it is elementary to see that $\F=(\F[t])_{t\geq 0}$ with $\F[t] = F$ for all $t \geq 0$ is a stationary element of $\RFPSpace.$
\end{proof}

Also note that the positivity of $F'$ together with \eqref{eq:Fprime} implies the following:
\begin{equation}
\label{eq:R_positivity}
\forall x\in(0,\infty): \int_x^\infty R(y)\,dy > 0.
\end{equation}
Actually, the last integral can be shown to be infinite, but we will not use this fact explicitly.
 Furthermore, multiplying both sides in \eqref{eq:Fprime} by $x$, then integrating over all $x \geq 0$ and applying Fubini's theorem, we can additionally conclude that
  \begin{equation}
  \label{eq:integral_R}
    \int_0^\infty z R(z)\,\pi(dz) = 2\int_0^\infty \,\pi(dx) = 2.
  \end{equation}

\subsection{Proof of (\ref{eq:diff_mu})}
\label{sec:diff_mu}

The goal of this section is to prove the following proposition:
\begin{proposition}
  \label{prop:markov}
  For every $\F \in \RFPSpace,$ $f \in C_c^1(0,\infty)$ and all $t \geq 0,$ the following holds:
      \[
	(d\F[t],f) = (d\F[0],f) + \int_0^t 
	\left( d\F[s],\RG f  \right)\,ds,
      \]
i.e.~the family of measures $(d\F[t])_{t\ge0}$ solves \eqref{eq:diff_mu}.      
\end{proposition}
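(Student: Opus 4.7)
The plan is to derive a partial differential equation satisfied by $\F[t](x)$ from the fixed-point identity $\F = \COPR(\F)$, and then test it against $f \in C_c^1(0,\infty)$ by integration by parts to recover the operator $\RG$.

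The PDE is obtained by differentiating the integral equation
\[
\F[t](x) = \F[0](e^{-t}x) + \int_0^t (e^{s-t}x)^2 H_s(e^{s-t}x)\,ds,\qquad H_s(y) := \int_y^\infty \frac{R(z)}{z}\,d\F[s](z),
\]
with respect to $t$. The key scaling identity $\partial_t[y^2 H_s(y)] = -x\partial_x[y^2 H_s(y)]$ when $y = e^{s-t}x$ allows the Leibniz contributions to recombine with the derivative of $\F[0](e^{-t}x)$ into a single transport operator acting on $\F[t]$, yielding
\[
\partial_t \F[t](x) = -x\F[t]'(x) + x^2 H_t(x),
\]
where $\F[t]'$ denotes the Lebesgue density of $d\F[t]$ (which exists since $\F \in \DTSpace_{\COPR}$).

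To connect with $\RG$, I write $(d\F[t], f) = -\int_0^\infty f'(x)\F[t](x)\,dx$ (integration by parts, with boundary terms vanishing because $f$ has compact support in $(0,\infty)$). Differentiating in $t$, substituting the PDE, and integrating by parts once more in $x$ using the distributional identity $\partial_x[x^2 H_t(x)] = 2xH_t(x) - xR(x)\F[t]'(x)$ (valid since $d\F[t] \ll \mathrm{Leb}$ and $R \in \Lloc$), together with one application of Fubini that rewrites $\int 2xf(x) H_t(x)\,dx$ as $\int \frac{R(z)}{z}\bigl[\int_0^z 2yf(y)\,dy\bigr]\,d\F[t](z)$, gathers all terms into $(d\F[t], \RG f)$. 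Integration in $t$ then yields the proposition.

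The main technical obstacle is justifying the pointwise time derivative rigorously, since $\F[t]'$ is only in $\Lloc$ and $R$ is only locally integrable. I plan to bypass this by working with finite increments from the start. One first derives the semigroup form $\F[t](x) = \F[s](e^{-(t-s)}x) + \int_s^t (e^{r-t}x)^2 H_r(e^{r-t}x)\,dr$ for $0 \le s \le t$ by substituting the equation for $\F[s]$ into that for $\F[t]$. Then, performing the spatial integration by parts and Fubini inside the time integral over $[s,t]$, one obtains a finite-difference version of the identity, and passing to the limit $s \to t^-$ by dominated convergence (enabled by the compact support of $f$ and continuity of $r \mapsto H_r$ inherited from the integral equation) produces $\tfrac{d}{dt}(d\F[t], f) = (d\F[t], \RG f)$ with a continuous right-hand side, which integrates to the stated identity.
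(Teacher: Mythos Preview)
Your strategy is essentially the paper's: integrate the fixed-point identity $\F = \COPR(\F)$ against $-f'$, change the order of integration, and recognise the generator $\RG$. The formal computations you sketch (the identity $\partial_x[x^2H_t(x)] = 2xH_t(x) - xR(x)\F[t]'(x)$ and the Fubini step) are exactly what the paper does, only organised differently.

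The substantive difference is in how the time derivative is justified. You put the shift on the \emph{state} side and plan to pass to the limit $s\to t^-$, invoking ``continuity of $r\mapsto H_r$ inherited from the integral equation''. That step is not free: $H_r(y)=\int_y^\infty \tfrac{R(z)}{z}\,d\F[r](z)$ involves the unbounded, merely locally integrable weight $R(z)/z$, and the fixed-point equation only gives you pointwise (hence weak) continuity of $r\mapsto\F[r]$, which does not automatically propagate through such an integral. It can be rescued, but it needs an argument.

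The paper sidesteps this by putting the time-shift on the \emph{test function}: with $T_tf(x)=f(e^tx)$ and $\RGT = \RG - x\partial_x$, one first proves the mild form
\[
(d\F[t],f) = (d\F[0],T_tf) + \int_0^t (d\F[s],\RGT T_{t-s}f)\,ds
\]
directly from $\F=\COPR(\F)$ by the same integration-by-parts and Fubini you describe. Now all the $t$-dependence on the right sits in $T_{t-s}f$, which is smooth in $t$ because $f\in C_c^1$; differentiating under the integral is then a dominated-convergence argument with dominator $xR(x)C_f$, and the resulting terms recombine via the mild form applied to $xf'$ in place of $f$. This buys exactly the regularity you were worried about, without ever needing continuity of $H_r$ in $r$ or a pointwise PDE for $\F[t]$.
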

\begin{proof}

Throughout the proof, fix $\F \in \RFPSpace$ and $f \in C_c^1(0,\infty)$. Define the semigroup $(T_t)_{t\ge0}$ by $T_tf(x) = f(e^t x)$. Furthermore, define the operator 
\[
  \RGT f(x) = \RG f(x) - xf'(x) = xR(x)\left[\int_0^x\frac{2u}{x^2}f(u)\,du  - f(x)\right].
\]
We now first show the following fact:
For all $t \geq 0,$
\begin{equation}
\label{eq:diff_aux}
	(d\F[t], f) = (d\F[0],T_tf) + \int_0^t 
	\left( d\F[s],\RGT T_{t-s}f \right)\,ds,
\end{equation}
This equation will follow directly from the definition of $\RFPSpace$.  Using the boundary conditions on $f,$ we can write
      \[
(d\F[t], f)
=\int_0^\infty f(x)\,dF_t(x)
=-\int_0^\infty f'(x) F_t(x)\, dx.
      \]
      We may now use the definition of $\COPR$ to write
      \[
(d\F[t], f)
=
-\int\limits_0^\infty F_0(e^{-t}x) f'(x)\,dx
-\int\limits_0^\infty f'(x)\int\limits_0^t (e^{s-t}x)^2 \left[\int_{e^{s-t}x}^\infty \frac{R(z)}{z}\,d\F[s]\right]\,ds
\,dx.
      \]
      The first term we integrate by parts and change variables.  To the second term, we apply Fubini to bring the $t$ integral to the outside and the $x$ integral to the inside (the justification for the change of order of integration follows readily from $\F = \COPR(\F)$).  This gives
      \[
(d\F[t], f)
=
\int\limits_0^\infty f(e^{t}x)\,dF_0(x)
-
\int\limits_0^t
\int\limits_0^\infty
  \frac{R(z)}{z}
  \int_0^{e^{t-s}z}
f'(x) (e^{s-t}x)^2 
\,dx
d\F[s](z)
ds.
      \]
      Finally, integrating the $x$ integral by parts, we arrive at
      \[
(d\F[t], f)
=
\int\limits_0^\infty f(e^{t}x)\,dF_0(x)
+
\int\limits_0^t
\int\limits_0^\infty
  \frac{R(z)}{z}
  \biggl[
  \int_0^{z}
  2xf(e^{t-s}x)\,dx
  -z^2f(e^{t-s}z)\biggr]
d\F[s](z)
ds,
      \]
      which is \eqref{eq:diff_aux}.  

We now claim that the map $t \mapsto (d\F[t], f)$ is continuously differentiable. To see this, we begin by noting that the equation $t \mapsto (d\F[t],f)$ is continuous by \eqref{eq:diff_aux}.  Moreover, for fixed $f,$ the function
      \[
	H(s,t) = (d\F[s],\RGT T_{t-s}f)
      \]
      is continuous in $s$ and $t,$ and in fact differentiable in $t$ as can be seen by differentiating under the integral sign in $(d\F[s],\RGT T_{t-s}f).$  Specifically, we have that 
    \[
    \frac{|\RGT[T_{b}f](x)-\RGT[T_{b}f](x)|}{|b-a|}
    \leq xR(x)C_f,
    \]
    for all $a,b \in (0,\infty)$ some constant $C_f$ depending only on $f.$  Thus, the differentiation is justified, and we have that 
    \[
      \partial_t H(s,t) = (d\F[s],\RGT \partial_t(T_{t-s}f))
      =(d\F[s],\RGT T_{t-s}[xf']),
    \]
    which is continuous in $s.$  Hence, we get from \eqref{eq:diff_aux} that $t \mapsto (d\F[t], f)$ is continuously differentiable and moreover 
    \begin{align*}
      \partial_t (d\F[t], f) 
      &= (d\F[0],T_t[xf']) + 
      \int_0^t 
      \left( d\F[s],\RGT T_{t-s}[xf'] \right)\,ds
      +
      \left( d\F[t],\RGT f \right) \\
      \intertext{The first two terms can be combined by \eqref{eq:diff_aux}, however, to give:}
      \partial_t (d\F[t], f)
      &=(d\F[t], xf')+( d\F[t],\RGT f ) \\
      &=\left(d\F[t], \RG f \right).
    \end{align*}
    Hence, on integrating this expression, we arrive at the statement of the proposition.
\end{proof}

\subsection{Construction of the cell process}
\label{sec:construction_markov_process}
Corresponding to the operator $\RG,$ we construct a \emph{piecewise deterministic Markov process (PDMP)} $X = (X_t)_{t\ge0}$ on $(0,\infty),$ which we refer to as the \emph{cell process}. A PDMP is a Markov process that almost surely has a finite number of jumps in any finite time interval and which moves deterministically between the jumps. In our case, denote by $(x,t)\mapsto \Phi(x,t)$ the flow generated by the vector field $x\mapsto x\frac{\partial}{\partial x}$. Then $\partial_t \Phi(x,t) = \Phi(x,t)$ and $\Phi(x,0) = x$, so that $\Phi(x,t) = xe^{t}$.
Simply stated, the cell process flows from $x$ along the integral curve $xe^{t}$ until some random time (regulated by the \emph{jump rate} $r(x) = xR(x)$) at which point the process jumps downwards by a multiplicative factor whose distribution is the size bias of $\Unif[0,1].$  This procedure then restarts, flowing upwards and jumping downwards.

 It will be useful to define the process precisely, making use of the special features of the process which allow for a simple construction. The state space of our Markov process will be the interval $[0,\infty)$, the point $0$ being an absorbing state. The process will be defined on a probability space $(\Omega,\Pr)$ supporting an iid sequence $(U_1,U_2,\ldots)$ of r.v.~uniformly distributed in $(0,1)$. The process will be denoted by $X^x = (X^x_t)_{t\ge0}$, where $x\in [0,\infty)$ is the starting point. The definition goes as follows.
 
If the starting point is $x = 0$, we set $X^x_t = 0$ for all $t\ge0$. Suppose now that $x\in (0,\infty)$.
Define for $z\in(0,\infty)$ the \emph{survivor function} $\surv_z(t)$ given by 
  \[
    \surv_z(t) = \exp\left( - \int_0^t T_s[r](z) ds \right) = \exp\left(- \int_z^{ze^t} R(u)\,du\right).
  \]
  Note that $\surv_z(t) > 0$ for all $z\in(0,\infty)$ and $t\ge0$, by the local integrability of $R$.
  
We construct a sequence of pairs $(\tau_k,J_k)_{k=1}^\infty$, where $\tau_k$ will be the time of the $k$-th jump, and $J_k$ the multiplicative factor of the jump of the process.
 These random variables also depend on $x$, but we suppress this from the notation.

The construction is done recursively: Set $\tau_0 := 0$ and $Y_0=0$. Let $k\in\N=\{1,2,\ldots,\}$. We define $(\tau_k,J_k,Y_k)$ as follows:
\begin{itemize}
\item If $\tau_{k-1} = \infty$, then $\tau_k = \infty$ as well, otherwise 
\[
\tau_k = \tau_{k-1} + \surv_{Y_{k-1}}^{-1}(U_{2k-1}),
\]
where $Y_k = x e^{\tau_k} \prod_{j=1}^k J_j$ and $ \surv_{x}^{-1}$ is the generalized inverse of the function $\surv_x$. Note in particular that since $J_j > 0$ for all $j$ and $\surv_{x}(t) > 0$ for all $x>0$ and $t\ge0$, we have $\tau_k > \tau_{k-1}$ for all $k$.
\item $J_k = \sqrt{U_{2k}},$ i.e.~$J_k$ takes values in $(0,1)$ and its law has density $u\mapsto 2u$ on $(0,1)$. We can and will assume that $\prod_{k=1}^\infty J_k = 0$ for all $\omega\in \Omega$.
\end{itemize}

We then define the process $X^x = (X^x_t)_{t\ge0}$ by
\begin{equation}
\label{eq:def_X}
X^x_t = 
\begin{cases}
xe^t\prod_{j=1}^k J_j = e^{t-\tau_k} Y_k & \text{if  $\tau_k \le t < \tau_{k+1}$ for some $k\in\N_0$}\\
0 & \text{if $t\ge \zeta \coloneqq \lim_{k\to\infty} \tau_k$}.
\end{cases}
\end{equation}
Note that in particular, $X^x_{\tau_k} = Y_k$ for all $k\in\N_0$. 
Finally, since $J_j >0$ for all $j$, we note the equivalence (if the starting point $x \ne 0$):
\begin{equation}
\label{eq:equivalence}
X^x_t = 0 \quad\text{iff}\quad t\ge \zeta.
\end{equation}

\emph{From now on, following common usage, we will rather work with a single stochastic process $X = (X_t)_{t\ge0}$ defined on a measurable space $(\Omega,\mathcal A)$, endowed with a family of probability measures $(\Pr_x)_{x\ge0}$, such that under $\Pr_x$, $X$ has the law of $X^x$. }We note that $(\Pr_x)_{x\ge0}$ is a probability kernel, i.e. it is measurable in $x$, since $X^x$ is a measurable function of the random variables $(\tau_k,J_k)_{k=1}^\infty$, which are easily seen by induction to be measurable w.r.t.~$x$; the measurability of $\Pr_x$ then is a consequence of (part of) Fubini's theorem.

Denote by $\filt = (\filt_t)_{t\ge0}$ the natural filtration of the process $X$. Further, set for all $t,x\ge0$ and every bounded Borel function $f:[0,\infty)\to\R$:
\begin{equation}
\label{eq:P_t}
P_tf(x) \coloneqq \E_x[f(X_t)].
\end{equation}
Finally, let $\Co$ be the space of bounded continuous functions on $[0,\infty)$ that are $0$ at $0,$ made into a Banach space using the supremum norm.  

\begin{proposition}
\label{prop:X_Markov}
\ 
\begin{enumerate}[(i)]
\item
The process $X$ is a homogeneous strong Markov process on $[0,\infty)$ with paths in the Skorohod space $D([0,\infty))$ of c\`{a}dl\`{a}g functions on $[0,\infty)$ and whose semigroup of transition operators is  $(P_t)_{t\ge0}$. In other words, for every $x\ge0$, every $\filt$-stopping time $\tau$ and every bounded Borel function $f:[0,\infty)\to\R$, we have
\[
\E_x[f(X_{\tau+t})\,|\,\filt_\tau] \1_{\tau<\infty} = P_tf(X_\tau)\1_{\tau<\infty}. 
\]
\end{enumerate}

Moreover, under Assumption~\ref{ass:stat}, we have the following:
\begin{enumerate}[(i)]
\setcounter{enumi}{1}
\item The probability measure $\pi$ is invariant for the Markov process $X$.
\item For every $f\in C_c^1(0,\infty)$, we have 
$$
\lim_{t\to\infty} t^{-1}(P_t f - f) = \RG f,\quad\text{in $\Lpi$}.
$$ 
\end{enumerate}
\end{proposition}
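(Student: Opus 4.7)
The plan is to handle the three claims in sequence, with the identity $(\pi,\RG f)=0$ for every $f\in C_c^1(0,\infty)$, an immediate consequence of Proposition~\ref{prop:markov} applied to the stationary element $\F[t]\equiv \pi$ guaranteed by Assumption~\ref{ass:stat}, serving as the common bridge between the probabilistic construction and the analytic statements (ii) and (iii).

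For (i), the c\`adl\`ag property of paths is immediate from the construction: between $\tau_k$ and $\tau_{k+1}$ the process follows the smooth flow $t\mapsto Y_k e^{t-\tau_k}$, at each $\tau_k$ the jump is strictly downward, and absorption at $0$ after $\zeta$ preserves right-continuity. Measurability of $x\mapsto\Pr_x$ follows by induction from the measurability in $x$ of the recursion defining $(\tau_k,J_k,Y_k)$. For the Markov property at a deterministic time $t$, the key computation is that, conditional on $\filt_t$ and on $\{\tau_k\le t<\tau_{k+1}\}$, the residual waiting time $\tau_{k+1}-t$ has survival function $\surv_{X_t}$ (a direct calculation using the exponential-integral form of $\surv$ together with the re-normalization of the uniform $U_{2k+1}$ conditioned on being small), and the uniforms used after this residual time are fresh, so the conditional law of $(X_{t+s})_{s\ge0}$ given $\filt_t$ coincides with $\Pr_{X_t}$. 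The strong Markov property then follows by the standard dyadic approximation of a general stopping time $\tau$ by $\tau_n=2^{-n}\lceil 2^n\tau\rceil$, applying the deterministic Markov property at each $\tau_n$, and passing to the limit using right-continuity of paths.

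For (ii), I first establish Dynkin's formula directly from the explicit construction: for $f\in C_c^1(0,\infty)$, the process
\[
M_t^f = f(X_t) - f(X_0) - \int_0^t \RG f(X_s)\,ds
\]
is a $\Pr_x$-martingale. The verification is by direct computation on each piece of the trajectory: between jumps $df(X_s)=X_s f'(X_s)\,ds$, which matches the drift part of $\RG$; and the expected increment contributed by a jump from pre-jump location $y$ is $yR(y)\int_0^1 2u(f(uy)-f(y))\,du\,dt$, matching the jump part of $\RG$. The unboundedness of $xR(x)$ is handled by localizing at the successive jump times, and compactness of $\mathrm{supp}(f)$ away from $0$ and $\infty$ keeps all terms finite. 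Integrating Dynkin's formula against $\pi$ then yields $(\pi P_t,f)-(\pi,f)=\int_0^t(\pi P_s,\RG f)\,ds$, showing that $(\pi P_t)_{t\ge0}$ solves the measure-valued equation \eqref{eq:diff_mu} with initial condition $\pi$; since the constant family $\nu_t\equiv\pi$ is also a solution (by Proposition~\ref{prop:markov} and $(\pi,\RG f)=0$), the uniqueness of solutions to \eqref{eq:diff_mu} developed in Section~\ref{sec:Lpi_and_uniqueness} closes the argument and gives $\pi P_t=\pi$.

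Part (iii) then follows quickly. From Dynkin's formula and the triangle inequality,
\[
\big\|t^{-1}(P_tf-f)-\RG f\big\|_{\Lpi}\le t^{-1}\int_0^t \|P_s\RG f-\RG f\|_{\Lpi}\,ds,
\]
so the claim reduces to strong continuity at $s=0$ of $(P_s)$ on $\Lpi$ applied to $g=\RG f\in \Lpi$. Invariance of $\pi$ from (ii) gives the contraction $\|P_s h\|_{\Lpi}\le \|h\|_{\Lpi}$; strong continuity at $s=0$ is first established for bounded continuous $g$ with compact support in $(0,\infty)$ using $X_s\to X_0$ almost surely (c\`adl\`ag paths with no jump at $0$) and dominated convergence, and is then extended to all of $\Lpi$ by the standard $3\varepsilon$ argument using density of such functions. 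The main obstacle is the uniqueness step in (ii), which is precisely why the paper develops a considerable amount of $\Lpi$-semigroup machinery before turning to ergodicity; everything else is routine PDMP bookkeeping and dominated convergence.
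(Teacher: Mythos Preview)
Your argument for (ii) is circular. You propose to conclude $\pi P_t = \pi$ by appealing to the uniqueness result for \eqref{eq:diff_mu} from Section~\ref{sec:Lpi_and_uniqueness}, but Proposition~\ref{prop:uniqueness} rests on Lemma~\ref{lem:semigroup}, whose proof uses precisely parts (ii) and (iii) of Proposition~\ref{prop:X_Markov}: invariance of $\pi$ is what gives the contraction property of $P_t$ on $\Lpi$, and (iii) gives strong continuity via the dense subspace $C_c^1(0,\infty)$. Proposition~\ref{prop:core}, also needed for uniqueness, likewise depends on (iii) through Proposition~\ref{prop:rgb}. So the $\Lpi$-semigroup machinery is logically downstream of this proposition, not upstream, and your closing sentence reverses the dependency. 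Your route to (iii) then inherits the same circularity, since you invoke invariance to obtain the contraction bound before arguing strong continuity. A smaller but related gap: your Dynkin martingale $M_t^f$ is asserted with only ``localizing at the successive jump times'' as justification, but with $R$ merely in $\Lloc$ the expected number of jumps in a finite interval is not known \emph{a priori} to be finite, and the $\Pr_x$-integrability of $\int_0^t |\RG f(X_s)|\,ds$ needs a genuine argument.

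The paper avoids all of this by proving (ii) and (iii) via direct calculation, with no appeal to uniqueness. The key tool is the identity $\int h(xe^s)S_x(s)\,\pi(dx) = e^{-2s}\int h(x)\,\pi(dx)$ (equation \eqref{eq:xes2}), an immediate consequence of the explicit form \eqref{eq:Fprime_exp} of the stationary density. For (ii) one compares $P_t$ with an auxiliary one-jump process $X'$ and shows $|\int (P_t g - g)\,d\pi| \le t\eta(t)$ with $\eta(t)\to 0$ for every Borel $g$ with values in $[0,1]$; telescoping then gives exact invariance. For (iii) the same comparison reduces matters to showing $(1-S_x(t))/t \to xR(x)$ in $\Lpi$, handled by Scheff\'e's lemma together with \eqref{eq:integral_R}. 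For (i), rather than the simple-Markov-plus-dyadic route you sketch, the paper localizes in space via $T^n=\inf\{t:X_t\le 1/n\}$ so that each truncated process falls under Davis's general PDMP theory (the finite-expected-jumps hypothesis being checked in Lemma~\ref{lem:existence_n}), and then passes to the limit $n\to\infty$.
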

\begin{proof}
Point (i) is proven in \cite[Chapter 25]{Davis1993} for general PDMP under the assumption that for every $x\in [0,\infty)$, the expected number of jumps of the process started from $x$ is finite in every finite time interval. This assumption is 
difficult to verify \emph{a priori} (and possibly false), as the jump rate may not be bounded near 0. We therefore localize the construction, i.e.~we construct a sequence of approximating processes through an appropriate family of stopping times which satisfy this assumption. 

Assume $x\in(0,\infty)$, the case $x=0$ being trivial. Then define for every $n\in\N$, $n> 1/x$,
\[
T^n = \inf\{t: X_t \le 1/n\},\quad\text{and}\quad X^n_t = \begin{cases} X_t & \text{if $t<T^n$}\\ 0 & \text{otherwise}\end{cases}.
\]
Since the process can decrease only by jumps, we have the following fact:
\begin{equation}
\label{eq:Tn_tauk}
\text{If $T^n < \infty$, then there exists $k\in\N$, such that $T^n = \tau_k$.}
\end{equation}
Inspecting the construction of a PDMP in \cite[Chapter 24]{Davis1993}, we can then identify the process $X^n$ as a PDMP on $\{0\}\cup(1/n,\infty)$ which informally evolves as follows:
\begin{itemize}
\item In $[1/n,\infty)$, it moves according to the flow $\Phi$,
\item jumps at rate $r$,
\item according to the jump kernel $Q(x,A) = \Pr(xJ_1\1_{xJ_1 \ge 1/n} \in A)$, where $J_1$ is defined as above.
\item The state $0$ is an absorbing state, i.e. the process does not move and the jump rate is 0.
\end{itemize}
To be precise, in \cite{Davis1993}, the state space of the PDMP is supposed to be an open subset of $\R^d$, but this can be remedied by considering the enlarged state space $(-\ep,\ep)\cup(1/n,\infty)$ for small enough $\ep>0$, and make all states in $(-\ep,\ep)$ absorbing. In Lemma~\ref{lem:existence_n} below, we show that the process  $X^n$ satisfies the assumption of finite expected number of jumps from \cite{Davis1993}.
Theorem~25.5 in \cite{Davis1993} then states that $X^n$ is indeed a homogeneous strong Markov process (with c\`{a}dl\`{a}g paths), i.e.\ if we define $P^n_t f(x) = \E_x[f(X^n_t)]$ and denote by $\filt^n = (\filt^n_t)_{t\ge0}$ the natural filtration of $X^n$, then for every $\filt^n$-stopping time $\tau^n$,
\begin{equation*}
\E_x[f(X^n_{\tau^n+t})\,|\,\filt^n_{\tau^n}] \1_{\tau^n<\infty} = P_tf(X^n_{\tau^n})\1_{\tau^n<\infty}. 
\end{equation*}
Note that if $\tau$ is a $\filt$-stopping time, then $\tau\wedge T^n$ is a $\filt^n$-stopping time. Furthermore, we have $\filt^n_{\tau\wedge T^n} = \filt_{\tau\wedge T^n}$. Finally, observe that $X^n$ depends on $\filt_\tau$ only through $\filt_{\tau\wedge T^n}$, since $X^n_t = 0$ for $t\ge T^n$. Combining these observations gives for every $\filt$-stopping time $\tau$,
\begin{equation}
\label{eq:markov_Xn}
\E_x[f(X^n_{\tau\wedge T^n+t})\,|\,\filt_\tau] \1_{\tau\wedge T^n<\infty} = P_tf(X^n_{\tau\wedge T^n})\1_{\tau\wedge T^n<\infty}. 
\end{equation}
We now decompose \eqref{eq:markov_Xn} according to whether $\tau < T^n$ or $\tau \ge T^n$. The left-hand side then equals,
\begin{align*}
\E_x[f(X_{\tau+t})\,|\,\filt_\tau] \1_{\tau < T^n} + f(0) \1_{T^n \le \tau < \infty},
\end{align*}
and the right-hand side equals 
\begin{align*}
P_tf(X_\tau) \1_{\tau < T^n}  + f(0) \1_{T^n \le \tau < \infty},
\end{align*}
Passing to the limit $n\to\infty$ then gives
\begin{equation}
\label{eq:markov_X_pre}
\E_x[f(X_{\tau+t})\,|\,\filt_\tau] \1_{\tau < \lim_{n\to\infty} T^n} = P_tf(X_\tau) \1_{\tau < \lim_{n\to\infty} T^n} 
\end{equation}
We claim that $X_t = 0$ for all $t\ge\lim_{n\to\infty}T^n$. Indeed, by \eqref{eq:Tn_tauk}, $(T^n)_{n\ge1}$ is a subsequence of $(\tau_k)_{k\ge1}$, hence 
$$\lim_{n\to\infty} T^n = \lim_{k\to\infty} \tau_k = \zeta.$$
The claim now follows from \eqref{eq:equivalence}.
Hence, adding $f(0) \1_{\zeta \le \tau < \infty}$ to both sides of \eqref{eq:markov_X_pre} yields
\[
\E_x[f(X_{\tau+t})\,|\,\filt_\tau] \1_{\tau < \infty}= P_tf(X_\tau) \1_{\tau < \infty},
\]
which was to be proven.

It remains to show that $X$ is c\`{a}dl\`{a}g. Looking at the definition of $X$ in \eqref{eq:def_X}, this is obvious on $[0,\zeta)$, since only finitely many jumps occur in every interval $[0,a]$ for $a<\zeta$. Since $X_t = 0$ for $t\ge \zeta$ (by \eqref{eq:equivalence}), it remains to show that $X$ has a left-hand limit at $0$. But since $\tau_k$ is strictly increasing in $k$ and the function $t\mapsto e^t$ is continuous, we have
\[
\lim_{t\uparrow \zeta} X_t = \lim_{k\to\infty} X_{\tau_k} = xe^\zeta \prod_{j=1}^\infty J_j = 0 = X_\zeta,
\]
so that $X$ is in fact continuous at $\zeta$. This finishes the proof of point (i).

We now prove point (ii), i.e.~that $\pi$ is an invariant measure for the process $X$. For this, it is enough to show that there exists a function $\eta:\R_+\to \R_+$ with $\eta(t) \to 0$ as $t\to 0$ and such that for every Borel function $g:[0,\infty)\to [0,1]$,
\begin{equation}
\label{eq:to_show}
\forall t>0: \left|\int_0^\infty (P_t g(x) - g(x))\, \pi(dx)\right| \le t \eta(t).
\end{equation}
Indeed, if \eqref{eq:to_show} holds, then telescoping and using the semigroup property yields for every $t>0$, every $g$ as above and every $n\ge 1$,
\begin{align*}
\left|\int_0^\infty (P_t g(x) - g(x))\, \pi(dx)\right|
&= \sum_{k=0}^{n-1} \left|\int_0^\infty (P_{t/n}P_{kt/n} g(x) - P_{kt/n} g(x))\, \pi(dx) \right|
\end{align*}
Note that $P_s g$ takes values in $[0,1]$ for all $s\ge 0$ if $g$ does so. Applying \eqref{eq:to_show} then shows that
\[
\left|\int_0^\infty (P_t g(x) - g(x))\, \pi(dx)\right| \le n \frac t n \eta\left(\frac t n\right)
\]
and letting $n\to\infty$ shows that
\[
\int_0^\infty P_t g(x)\, \pi(dx) =  \int_0^\infty g(x)\, \pi(dx),
\]
where $g$ was any Borel function with values in $[0,1]$. This implies that $\pi$ is an invariant measure.

We now show \eqref{eq:to_show}. Fix a Borel function $g:[0,\infty)\to [0,1]$. We let $X' = (X'_t)_{t\ge0}$ be the process that equals $(X_t)_{t \geq 0}$ up to and including the first jump and then grows exponentially afterwards.  Formally,
\begin{equation}
\label{eq:Xprime}
X'_t = 
\begin{cases}
X_0 e^t,\quad t <\tau_1\\
X_0 e^t J_1,\quad t \ge \tau_1
\end{cases},
\end{equation}
and note that $X'_t = X_t$ for all $t < \tau_2$. Hence, for every $x\in (0,\infty)$ and $t>0$,
\begin{equation}
\label{eq:comparison_X_Xprime}
\big|\E_x[g(X_t)] - \E_x[g(X'_t)]\big| \le \Pr_x(\tau_2 \le t),
\end{equation}
using that $g$ takes values in $[0,1]$.
We now claim: 
\begin{align}
\label{eq:gen1} \forall t>0: \left|\int_0^\infty [\E_x[g(X'_t)] - g(x)]\,\pi(dx)\right| &\le 8t^2\\
\label{eq:gen2} \text{and}\qquad \Pr_\pi(\tau_2 \le t) /t &\to 0,\quad t\to0.
\end{align}
Then \eqref{eq:gen1} and \eqref{eq:gen2} together with \eqref{eq:comparison_X_Xprime} readily imply \eqref{eq:to_show}.

We prove \eqref{eq:gen1} and \eqref{eq:gen2} by direct calculation. Recall that $\pi(dx) = F'(x)\,dx$, with $F$ from Assumption~\ref{ass:stat}. Recalling the definition of the survival function $\surv_x$, we can rewrite \eqref{eq:Fprime_exp} as
\begin{equation}
\label{eq:xes}
\forall s\ge 0,\,x\in(0,\infty): F'(x) \surv_x(s) = e^{-s} F'(xe^s),
\end{equation}
and recalling that $\pi(dx) = F'(x)\,dx$, this gives for every bounded Borel function $h:(0,\infty)\to\R$,
\begin{equation}
\label{eq:xes2}
\forall s\ge 0: \int_0^\infty h(xe^s) S_x(s)\,\pi(dx) = e^{-2s} \int_0^\infty h(x)\,\pi(dx),
\end{equation}
where we applied first \eqref{eq:xes}, then a change of variables $xe^s\mapsto x$. This formula will be used in several places.

We now show \eqref{eq:gen1}. We decompose
\begin{align}
\E_x[g(X'_t)] 
&= \E_x[g(xe^t)\1_{t < \tau_1}] + \E_x[g(xe^tJ_1)\1_{t\ge \tau_1}]\nonumber\\
&=  g(xe^t)S_x(t) + \E[g(xe^tJ_1)] (1-S_x(t)) \label{eq:decomp},
\end{align}
where we used the fact that $\tau_1$ and $J_1$ are independent. Integrating w.r.t.~$\pi$, we have for the first term by \eqref{eq:xes2},
\begin{align}
\int_0^\infty g(xe^t)S_x(t)\,\pi(dx) = e^{-2t}  \int_0^\infty g(x) \,\pi(dx). \label{eq:decomp1}
\end{align}
As for the second term on the RHS of \eqref{eq:decomp}, we have again by \eqref{eq:xes2},
\begin{align}
\int_0^\infty \E[g(xe^tJ_1)] (1-S_x(t))\,\pi(dx) &= \int_0^\infty (\E[g(xe^tJ_1)] - e^{-2t} \E[g(xJ_1)])\,\pi(dx). \label{eq:decomp2}
\end{align}
Now, by the definition of the random variable $J_1$, we have for every $x\in(0,\infty)$,
\begin{align}
\E[g(xe^tJ_1)] - e^{-2t} \E[g(xJ_1)] 
&= \int_0^{e^t} \frac{2u}{(e^t)^2} g(xu)\,du - e^{-2t}\int_0^x 2u  g(xu)\,du \nonumber \\
&= e^{-2t} \int_1^{e^t} 2u \,g(xu)\,du \nonumber\\
&= 2 e^{-2t} \int_0^t e^{2s} g(xe^s)\,ds. \label{eq:decomp3}
\end{align}
Integrating against $\pi$ and using Fubini's theorem, we get
\begin{align}
\int_0^\infty \int_0^t e^{2s} g(xe^s)\,ds\,\pi(dx) 
&= \int_0^t e^{2s} \int_0^\infty g(xe^s)\,\pi(dx) \,ds \nonumber\\
&= \int_0^t \int_0^\infty g(x)\frac 1 {S_{xe^{-s}}(s)}\,\pi(dx). \label{eq:decomp4}
\end{align}
Since $S_{xe^{-s}}(s)$ is decreasing in $s$ and equals 1 at 0, we get from \eqref{eq:decomp4}, using that $g$ takes values in $[0,1]$,
\begin{align}
\left|\int_0^\infty \int_0^t e^{2s} g(xe^s)\,ds\,\pi(dx) - t \int_0^\infty g(x)\,\pi(dx)\right|
& \le  t \int_0^\infty \left(\frac 1 {S_{xe^{-t}}(t)}-1\right)\,\pi(dx)\nonumber\\
& = t (e^{2t}-1) \label{eq:decomp5},
\end{align}
where the last equality follows from \eqref{eq:xes2} applied to $h(x) = 1/S_{xe^{-t}}(t)$ and the fact that $\pi$ is a probability measure. Collecting  \eqref{eq:decomp2}, \eqref{eq:decomp3} and \eqref{eq:decomp5}, we get
\begin{align}
\left|\int_0^\infty \E[g(xe^tJ_1)] (1-S_x(t))\,\pi(dx) - 2t e^{-2t} \int_0^\infty g(x)\,\pi(dx)\right| \le 2t(1-e^{-2t}) \label{eq:decomp6}
\end{align}
Equations \eqref{eq:decomp}, \eqref{eq:decomp1} and \eqref{eq:decomp6} now yield, using that $g$ takes values in $[0,1]$,
\begin{align*}
\left|\int_0^\infty [\E_x[g(X'_t)] - g(x)]\,\pi(dx)\right| \le |(1+2t)e^{-2t} - 1|+ 2t(1-e^{-2t}) \le 8t^2
\end{align*}
using standard estimates on the exponential function, in particular $1 \ge (1+x)e^{-x}\ge 1-x^2$. This is \eqref{eq:gen1}.

It remains to prove \eqref{eq:gen2}.  Recall that $r(x) = xR(x)$ for all $x > 0$ and $\frac{d}{dt} S_x(t) = r(xe^t)S_X(t).$   By definition of the process $X$, we have for every $x\in(0,\infty)$,
\begin{align*}
\Pr_x(\tau_2 \le t) = \int_0^t r(xe^s)S_x(s) \E[1-S_{xe^sJ_1}(s)]\,ds,
\end{align*}
where the expectation is meant with respect to $J_1$.
Integrating against $\pi$ and using \eqref{eq:xes2}, we get
\begin{align}
\Pr_\pi(\tau_2 \le t) 
&= \int_0^t \left(\int_0^\infty r(xe^s)S_x(s)\E[1-S_{xe^sJ_1}(s)]\,\pi(dx)\right)ds\nonumber\\
&=  \int_0^t e^{-2s} \left(\int_0^\infty r(x)\E[1-S_{xJ_1}(s)]\,\pi(dx)\right)ds\nonumber\\
&\le t \int_0^\infty r(x)\E[1-S_{xJ_1}(t)]\,\pi(dx), \label{eq:pr_tau2}
\end{align}
since the function $S_y(s)$ is decreasing in $s$ for every $y\in(0,\infty)$.
Now, $\E[1-S_{xJ_1}(t)] \to 0$ as $t\to\infty$, for every $x\in (0,\infty)$. Furthermore by \eqref{eq:integral_R}, 
\[
\int_0^\infty r(x)\,\pi(dx) = 2.
\]
By dominated convergence, we thus get
\begin{equation}
\label{eq:pr_tau2_2}
\int_0^\infty r(x)\E[1-S_{xJ_1}(t)]\,\pi(dx) \to 0,\quad t\to 0.
\end{equation}
Equation \eqref{eq:gen2} now follows from \eqref{eq:pr_tau2} and \eqref{eq:pr_tau2_2}.

We now prove point (iii). Fix a function $f\in C_c^1(0,\infty)$. We want to show that $t^{-1} (P_t f - f)$ converges to $\RG f$ in $\Lpi$. Recalling the definition of $(X_t')_{t\ge0}$ from above, it is enough by \eqref{eq:gen2} to show the following convergence:
\begin{equation}
\label{eq:conv_Lpi}
\frac{\E_x[f(X_t')] - f(x)}{t} \to \RG f(x),\quad\text{in $\Lpi$}.
\end{equation}
By \eqref{eq:decomp}, we can decompose
\begin{align}
\label{eq:decomp10}
\frac{\E_x[f(X_t')] - f(x)}{t} = \frac{f(xe^t) - f(x)}t S_x(t) + (\E[f(xe^tJ_1)] - f(x)) \frac{1-S_x(t)}{t}.
\end{align}
Since $f'\in C_c(0,\infty)$ and $S_x(t) \uparrow 1$ as $t\to0$, the first term on the right-hand side of \eqref{eq:decomp10} converges to $xf'(x)$ in $\mathrm L^\infty$ as $t\to0$ and therefore in $\Lpi$. As for the second term, first observe that $\E[f(xe^tJ_1)] - f(x)$ converges to $\E[f(xJ_1)]-f(x)$ in $\mathrm L^\infty$ as $t\to0$, by the continuity of $f$. We claim that $(1-S_x(t))/t$ converges to $xR(x)$ in $\Lpi$. First, note that by the Lebesgue differentiation theorem the convergence holds Lebesgue-a.e., hence $\pi$-a.e. by the equivalence of the two measures. Second, we have convergence of the $\Lpi$-norms:
\begin{align*}
\int \frac{1-S_x(t)}{t}\,\pi(dx) 
&= \frac{1 - e^{-2t}} t \int \pi(dx) && \text{by \eqref{eq:xes2}}\\
&\to 2 && \text{as $t\to 0$}\\
&= \int x R(x)\,\pi(dx) && \text{by \eqref{eq:integral_R}}.
\end{align*}
By a theorem of Riesz known amongst probabilists as Scheff\'e's lemma (\cite{Kusolitsch}!), using the positivity of $1-S_x(t)$, we then get that $(1-S_x(t))/t$ converges indeed to $xR(x)$ in $\Lpi$. To summarize, we have proven the following convergence in $\Lpi$ as $t\to0$:
\[
\frac{\E_x[f(X_t')] - f(x)}{t} \to xf'(x) + xR(x) (\E[f(xJ_1)]-f(x)) = \RG f(x),
\]
where the last equality follows from the definition of $J_1$. This is exactly \eqref{eq:conv_Lpi} which was to be shown.
\end{proof}

The following lemma appeared in the above proof of Proposition~\ref{prop:X_Markov}:

\begin{lemma}
\label{lem:existence_n}
For every $t\ge0$ and every $x>1/n$, we have 
$$\E_x\left[\sum_{k=1}^\infty \1_{\tau_k \le t\wedge T^n}\right] < \infty.$$
\end{lemma}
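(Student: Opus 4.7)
The plan is to obtain a uniform geometric bound $\Pr_x(\tau_k \le t\wedge T^n) \le (1-e^{-C})^k$ and then simply sum in $k$. Here $C := \int_{1/n}^{xe^t} R(u)\,du$, which is finite because $R \in \Lloc$ and $[1/n,xe^t]$ is a compact subinterval of $(0,\infty)$. This is the only place where local integrability of $R$ enters.

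The first step, which is purely deterministic, is the observation that $X_s \le xe^t$ for every $s \le t$: indeed, by the explicit formula \eqref{eq:def_X} and $J_j \in (0,1)$, $X_s = xe^s\prod_{j} J_j \le xe^t$. The analogous bound $X_{\tau_k^-} = Y_{k-1}e^{\tau_k - \tau_{k-1}} \le xe^t$ also holds on $\{\tau_k \le t\}$. Consequently, on $\{\tau_k \le t\wedge T^n\}$ we have the a priori inequality
\[
\tau_k - \tau_{k-1} \le \log(xe^t/Y_{k-1}).
\]

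The second step is to use that, conditionally on $\filt_{\tau_{k-1}}$, the interjump time $\tau_k - \tau_{k-1}$ has survivor function $\surv_{Y_{k-1}}$. On the event $\{\tau_{k-1} < t\wedge T^n\}$ we have $Y_{k-1} > 1/n$, since $X$ decreases only at jump times and by definition of $T^n$ the process stays strictly above $1/n$ before $T^n$. Hence
\[
\Pr_x(\tau_k - \tau_{k-1} \le \log(xe^t/Y_{k-1}) \mid \filt_{\tau_{k-1}}) = 1 - \exp\Bigl(-\int_{Y_{k-1}}^{xe^t} R(u)\,du\Bigr) \le 1 - e^{-C}.
\]
Combining with the inclusion $\{\tau_k \le t\wedge T^n\} \subseteq \{\tau_{k-1} < t\wedge T^n\}$ (since $\tau_k > \tau_{k-1}$ whenever both are finite) and iterating in $k$ yields the claimed geometric bound. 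The series $\sum_{k\ge1}(1-e^{-C})^k$ is then finite, as required.

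The main obstacle I anticipate is just careful book-keeping at the boundary events: verifying that on $\{\tau_{k-1} < t\wedge T^n\}$ one genuinely has $Y_{k-1} > 1/n$ strictly, and noting that $T^n$ cannot be attained by a continuous crossing, only by a downward jump. Once these points are in hand, everything else is direct manipulation of the PDMP construction.
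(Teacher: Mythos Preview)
Your argument is correct. The key observations---that the process stays in $[1/n,xe^t]$ on the event in question, that $\{\tau_{k-1}<T^n\}$ is $\filt_{\tau_{k-1}}$-measurable and forces $Y_{k-1}>1/n$, and that the conditional law of $\tau_k-\tau_{k-1}$ is governed by $\surv_{Y_{k-1}}$---are exactly the ones the paper uses as well, and your bookkeeping of the boundary events is clean (indeed $\tau_k\le T^n$ gives $\tau_{k-1}<\tau_k\le T^n$, hence $Y_{k-1}=X_{\tau_{k-1}}>1/n$).

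Where you differ from the paper is in how you package the conclusion. The paper argues that, conditionally on the past, each interjump time stochastically dominates a fixed nondegenerate random variable with survivor function $E^n(s)=\exp\bigl(-\sup_{y\in[1/n,xe^{t}]}\int_y^{ye^s}R(u)\,du\bigr)$, and then compares the jump count to a renewal process on $[0,t]$ with that interarrival law. You instead bound directly the probability of each interjump being short enough to fit before time $t$, obtaining the geometric tail $\Pr_x(\tau_k\le t\wedge T^n)\le(1-e^{-C})^k$ with $C=\int_{1/n}^{xe^t}R$. Your route is a touch more elementary (no appeal to the renewal function) and in fact yields the stronger conclusion that the jump count has finite exponential moments; the paper's renewal comparison is perhaps more standard and makes the i.i.d.\ lower bound on interjump times explicit. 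Both rest on the same compactness-plus-local-integrability input.
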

\begin{proof}
Fix $t_*\ge0$. By \eqref{eq:def_X}, we have $X_t \le xe^t \le xe^{t_*}$ for all $t\le t_*$. In particular, for each $k\in\N$, conditioned on $\tau_1,\ldots,\tau_k$ and $J_1,\ldots,J_k$, on the event $\tau_k \le t_*\wedge T^n$, the difference $\tau^n_{k+1}-\tau^n_k$ is stochastically dominated from below by a random variable with survivor function
\[
E^n(t)
= \exp\left(- \sup_{x \in[1/n,xe^{t_*}]} \int_x^{e^tx} R(u)\,du\right) \quad \text{for all } 0 \leq t \leq t_*,
\]
and such a random variable is non-degenerate since $E^n(t) > 0$ for all $t$ by the local integrability of $R$. Hence, the sum $\sum_{k=1}^\infty \1_{\tau_k \le t\wedge T^n}$ is stochastically bounded from above by the number of points in the interval $[0,t_*]$ of a renewal process with interarrival times distributed according to the survivor function $E^n$. But this quantity has finite expectation, which finishes the proof.
\end{proof}

\subsection{Uniqueness of solutions to (\ref{eq:diff_mu})}
\label{sec:Lpi_and_uniqueness}
{\emph{Everywhere in this section, we work under Assumption~\ref{ass:stat}.}}

The goal of this section is to prove the following result:

\begin{proposition}
\label{prop:uniqueness}
Suppose that $(\mu_t)_{t \geq 0}$ is a family of Borel probability measures absolutely continuous with respect to $\pi$ satisfying that $t \mapsto \mu_t$ is Borel measurable and so that
  \[
    \sup_{t \geq 0} \left\|\tfrac{d\mu_t}{d\pi}\right\|_{\operatorname{L}^\infty(\pi)} < \infty.
  \]
  Suppose further that $(\mu_t)_{t\ge0}$ is a solution to \eqref{eq:diff_mu}, i.e., for all $f \in C_c^1(0,\infty)$ and $t\ge0$,
  \[
    (\mu_t, f) - (\mu_0,f) = \int_0^t (\mu_s, \RG f)\,ds.
  \]
  Then $\mu_t = \mu_0 P_t$ for all $t\ge0$.
\end{proposition}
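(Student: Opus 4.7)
The plan is to identify $(P_t)_{t\ge0}$ from \eqref{eq:P_t} with a strongly continuous contraction semigroup on $\Lpi$ and then run a resolvent-duality argument. Since $\pi$ is invariant (Proposition~\ref{prop:X_Markov}(ii)), Jensen's inequality under $\Pr_x$ yields $\int|P_tf|\,d\pi \le \int|f|\,d\pi$ for bounded Borel $f$, so $P_t$ extends by density to a contraction on $\Lpi$. Proposition~\ref{prop:X_Markov}(iii) supplies strong continuity at $0$ on the dense subspace $C_c^1(0,\infty)$, and combined with contractivity this gives strong continuity of $(P_t)$. Let $(A,D(A))$ be its infinitesimal generator; Proposition~\ref{prop:X_Markov}(iii) also gives $C_c^1(0,\infty)\subset D(A)$ with $A|_{C_c^1(0,\infty)} = \RG$, and by Hille--Yosida $(\lambda - A)\colon D(A)\to\Lpi$ is a bijection for every $\lambda>0$.

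The main analytic work is to prove that $C_c^1(0,\infty)$ is a \emph{core} for $A$, i.e.~dense in $D(A)$ for the graph norm. Following the decomposition strategy of \cite{Mischler2016}, I would split $\RG = \RG_0 + \RG_1$ with
\[
\RG_0 f(x) = xf'(x) - xR(x)f(x), \qquad \RG_1 f(x) = \frac{R(x)}{x}\int_0^x 2u\,f(u)\,du.
\]
The transport-plus-killing operator $\RG_0$ generates the explicit Feynman--Kac semigroup $P^0_t f(x) = \surv_x(t) f(xe^t)$, and standard cut-off and mollification arguments yield that $C_c^1(0,\infty)$ is a core for $\RG_0$; the only subtlety is positioning cut-offs near $0$ and $\infty$ so that mere local integrability of $R$ suffices, which can be handled using the invariance identity \eqref{eq:Fprime_exp}. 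The perturbation $\RG_1$ is actually a bounded operator on $\Lpi$: by Fubini and \eqref{eq:Fprime},
\[
\|\RG_1 f\|_{\Lpi} \le \int_0^\infty 2u|f(u)| \int_u^\infty R(x)\,\frac{F'(x)}{x}\,dx\,du = \int_0^\infty 2|f(u)|\,F'(u)\,du = 2\|f\|_{\Lpi}.
\]
A standard bounded-perturbation theorem then transfers the core property from $\RG_0$ to $A = \overline{\RG_0 + \RG_1}$.

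With the core property in hand, I would conclude by Laplace-transform duality. Multiplying the weak equation \eqref{eq:diff_mu} by $\lambda e^{-\lambda t}$, integrating in $t$ and applying Fubini gives, for every $\phi\in C_c^1(0,\infty)$ and $\lambda>0$,
\begin{equation*}
\lambda \hat\mu_\lambda(\phi) - \hat\mu_\lambda(\RG\phi) = (\mu_0,\phi), \qquad \hat\mu_\lambda(\phi) := \int_0^\infty e^{-\lambda t}(\mu_t,\phi)\,dt.
\end{equation*}
The candidate $\nu_t := \mu_0 P_t$ has density $d\nu_t/d\pi$ bounded in $L^\infty(\pi)$ by dual contractivity of $P_t$, and satisfies the same identity via $P_tf - f = \int_0^t P_s Af\,ds$ for $f\in D(A)$. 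Subtracting, the difference $\sigma_\lambda := \hat\mu_\lambda - \hat\nu_\lambda$ is represented by a density $g_\lambda \in L^\infty(\pi)$, and one obtains $\int(\lambda\phi - \RG\phi)\,g_\lambda\,d\pi = 0$ for all $\phi\in C_c^1(0,\infty)$. Because $C_c^1(0,\infty)$ is a core and $(\lambda - A)$ is surjective, the image $(\lambda - \RG)(C_c^1(0,\infty))$ is dense in $\Lpi$, forcing $g_\lambda \equiv 0$. Laplace-transform uniqueness applied to the continuous function $t\mapsto(\mu_t,\phi)$ (continuity comes directly from the weak equation) then yields $\mu_t = \mu_0 P_t$ for every $t\ge 0$.

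The hardest step is unambiguously the core property: the operator $\RG$ has an unbounded multiplicative piece $xR(x)$ at the boundaries of $(0,\infty)$ with only $\Lloc$ regularity on $R$, and it is this that forces the $\RG_0/\RG_1$ splitting together with invariance-based estimates, rather than any softer perturbative argument applied directly to $\RG$.
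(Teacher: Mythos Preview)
Your proposal is correct and follows essentially the same route as the paper: the decomposition $\RG=\RG_0+\RG_1$ with $\RG_0 f=xf'-xR(x)f$ and the bound $\|\RG_1\|_{\Lpi\to\Lpi}\le 2$ via \eqref{eq:Fprime} are exactly the paper's $\RGB,\RGK$ and Lemma~\ref{lem:rgk}, the core statement for $\RG_0$ is the paper's Proposition~\ref{prop:rgb}, and the Laplace-transform conclusion matches the paper's final argument. The only cosmetic difference is that the paper proves density of $(\lambda-\RG)(C_c^1)$ directly from the explicit resolvent bound $\|(\lambda-\RGB)^{-1}\|\le(2+\lambda)^{-1}$ and a Neumann series, whereas you package this as a bounded-perturbation/core-transfer statement and finish by $L^\infty(\pi)$ duality rather than by first extending \eqref{eq:diff_mu} to all of $\domRG$.
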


As mentioned above, the proof of Proposition~\ref{prop:uniqueness} will heavily rely on semigroup theory. The results from the previous section will be an important ingredient as well. We start with the following lemma:
\begin{lemma}
\label{lem:semigroup}
$(P_t)_{t\ge0}$ is a strongly continuous contraction semigroup on $\Lpi.$
\end{lemma}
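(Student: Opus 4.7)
My plan is to verify directly the three defining properties: the semigroup law, the contraction bound on $\Lpi$, and strong continuity at $0$. For bounded Borel $f$ the operator $P_tf(x)=\E_x[f(X_t)]$ is defined pointwise by \eqref{eq:P_t}, so the first task is the semigroup identity $P_{t+s}=P_tP_s$ on bounded Borel functions; I would obtain this immediately by applying the Markov property of Proposition~\ref{prop:X_Markov}(i) at the (deterministic) stopping time $\tau=s$, while $P_0=\mathrm{Id}$ is evident.

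Next I would prove contractivity. The key input is the $\pi$-invariance from Proposition~\ref{prop:X_Markov}(ii), which for nonnegative bounded Borel $f$ gives $\int P_tf\,d\pi=\int f\,d\pi$; combined with the pointwise bound $|P_tf|\le P_t|f|$ this yields $\|P_tf\|_{\Lpi}\le\|f\|_{\Lpi}$ for every bounded Borel $f$. The same invariance shows that if $f=g$ $\pi$-a.e.\ and both are bounded then $P_tf=P_tg$ $\pi$-a.e., so $P_t$ passes to a well-defined contraction on the subspace of $\Lpi$ represented by bounded functions. Since $\pi$ is a probability measure equivalent to Lebesgue on $(0,\infty)$, this subspace is dense in $\Lpi$ (e.g.\ by truncation), and $P_t$ extends uniquely to a contraction on all of $\Lpi$; the semigroup law transfers to the extension by continuity.

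Finally I would establish strong continuity at $0$. For $f\in C_c(0,\infty)$, the c\`adl\`ag property of $X$ (Proposition~\ref{prop:X_Markov}(i)) gives $X_t\to x$ $\Pr_x$-a.s.\ as $t\downarrow 0$, hence $P_tf(x)\to f(x)$ pointwise; since $|P_tf-f|\le 2\|f\|_\infty$ and $\pi$ is finite, dominated convergence yields $\|P_tf-f\|_{\Lpi}\to 0$. I would then upgrade to arbitrary $f\in\Lpi$ by density of $C_c(0,\infty)$ in $\Lpi$ together with the contraction bound, via a standard $3\varepsilon$-argument. The only point requiring any care is the descent of $P_t$ from a pointwise operator on bounded Borel functions to equivalence classes in $\Lpi$, but this is handled cleanly by the invariance of $\pi$; there is no substantive obstacle in this lemma.
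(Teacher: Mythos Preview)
Your proof is correct and follows essentially the same line as the paper: contractivity via positivity and $\pi$-invariance (Proposition~\ref{prop:X_Markov}(ii)), then strong continuity on a dense subspace extended by a $3\varepsilon$/closedness argument. The one minor difference is in how you check strong continuity on the dense subspace: the paper invokes Proposition~\ref{prop:X_Markov}(iii) (the generator limit $t^{-1}(P_tf-f)\to\RG f$ in $\Lpi$ for $f\in C_c^1(0,\infty)$) to get $P_tf\to f$, whereas you use the c\`adl\`ag paths directly to obtain pointwise convergence for $f\in C_c(0,\infty)$ and then dominated convergence. Your route is slightly more elementary in that it avoids the heavier part (iii); the paper's route simply reuses a result already established. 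Either way the argument is short and the extension-to-equivalence-classes point you raise is handled, as you say, by invariance of $\pi$.
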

\begin{proof}
By virtue of having shown that $\pi$ is invariant for $P_t$ (part (ii) of Proposition~\ref{prop:X_Markov}) and by virtue of the positivity of $P_t,$ we have that for $f \in \Lpi$,
\[
    \Lpin[ P_tf ]
    \leq 
    \Lpin[ P_t |f| ]
    =(\tfrac{d\pi}{dx}, P_t|f|) 
    =(\tfrac{d\pi}{dx},|f|) 
    = \Lpin[ f ].
  \]
  It follows that $P_t$ is contraction semigroup on $\Lpi$. Now define the space
    \[
    L_0 =\left\{ 
      f \in \Lpi~:~\lim_{t \to 0} \Lpin[ P_t f - f] = 0
    \right\} 
  \]
  By part (iii) of Proposition~\ref{prop:X_Markov}, it contains $C_c^1(0,\infty)$ and is thus dense in $\Lpi$. Moreover, this space is necessarily closed (see \cite[(1.3)]{DynkinMPBookI}), and hence in fact $L_0 = \Lpi.$ This shows that $(P_t)_{t\ge0}$ is also strongly continuous and finishes the proof of the lemma.
\end{proof}

It follows from Lemma~\ref{lem:semigroup} and classical semigroup theory \cite[Theorems~1.1 and~1.3]{DynkinMPBookI} that there is a subspace $\domRG \subset \Lpi$ and an operator $\RG : \domRG \to \Lpi$, called the infinitesimal generator of the semigroup $(P_t)_{t\ge0}$, so that: 
\begin{enumerate}[(i)]
  \item $\domRG$ is the space of all $f \in \Lpi$ such that $\lim_{t \to 0}t^{-1}(P_t f - f)$ exists in $\Lpi$, moreover, this limit equals $\RG f$.
    \item
      For all $\lambda > 0,$ the map $(\lambda - \RG)^{-1}$ given by $\int_0^\infty e^{-\lambda t}P_t\,dt$ is a bounded linear operator from $\Lpi$ to $\domRG.$
    \item
      We have that $P_t$ maps $\domRG \to \domRG$ and for all $f \in \domRG,$ we have that
      \begin{equation}
      \label{eq:Pt_RG}
	P_t f - f = \int_0^t P_s \RG f\,ds = \int_0^t \RG P_s f\,ds.
      \end{equation}
  \end{enumerate}
By part (iii) of Proposition~\ref{prop:X_Markov}, we see that there is no conflict of notation: for $f\in C_c^1(0,\infty)$, the operator $\RG$ defined above indeed coincides with the operator $\RG$ defined in \eqref{eq:RG}. In particular, $C_c^1(0,\infty) \subset \domRG$. The key to the proof of Proposition~\ref{prop:uniqueness} is now the following proposition:
  \begin{proposition}
    The space $C_c^1(0,\infty)$ is a core for $\RG;$ that is, for any $f \in \domRG,$ there is a sequence $\left\{ f_n \right\}_{n=1}^\infty \subset C_c^1(0,\infty)$ so that $f_n \to f$ and $\RG f_n \to \RG f$ in $\Lpi$, as $n\to\infty$.
    \label{prop:core}
  \end{proposition}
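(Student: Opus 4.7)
The plan is to use the standard criterion: a subspace $D_0\subset D(\RG)$ is a core iff it is dense in $D(\RG)$ under the graph norm, equivalently iff $(\lambda-\RG)D_0$ is dense in $\Lpi$ for some $\lambda>0$. The obstruction to the easiest proof is that the semigroup $(P_t)$ does not preserve $C_c^1(0,\infty)$ (downward jumps destroy compact support away from $0$), so Dynkin's invariance criterion is not available. Instead I would follow the decomposition strategy of Mischler: write
\[
\RG=A+K,\qquad Af(x)=xf'(x)-xR(x)f(x),\quad Kf(x)=R(x)\int_0^x\tfrac{2u}{x}f(u)\,du,
\]
prove the core property for $A$, and then transfer it to $\RG$ via a bounded perturbation.

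The first preliminary is that $K$ is bounded on $\Lpi$ with norm at most $2$. By Fubini and \eqref{eq:Fprime},
\[
\int_0^\infty|Kf(x)|\,\pi(dx)\le\int_0^\infty 2u|f(u)|\left(\int_u^\infty \frac{R(x)}{x}\,\pi(dx)\right)du=2\int_0^\infty|f|\,d\pi,
\]
since the inner integral equals $F'(u)/u$ by \eqref{eq:Fprime}. The second preliminary is that the growth-with-killing semigroup $S_tf(x)=f(xe^t)\surv_x(t)$ is a strongly continuous contraction on $\Lpi$ with $\|S_t\|=e^{-2t}$, by \eqref{eq:xes2} and the density of $C_c(0,\infty)$ in $\Lpi$. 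A direct differentiation along the lines of the proof of Proposition~\ref{prop:X_Markov}(iii) shows that $C_c^1(0,\infty)\subset D(A)$ with $A$ acting as written.

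The heart of the argument is to show that $C_c^1(0,\infty)$ is a core for $A$. For this I would change to logarithmic coordinates $y=\log x$, setting $g(y)=f(e^y)$; the semigroup becomes a killed translation semigroup $\tilde S_tg(y)=g(y+t)\tilde S_y(t)$ with $\tilde S_y(t)=\exp(-\int_y^{y+t}e^zR(e^z)\,dz)$, acting on a weighted $L^1$-space on $\R$ whose weight $F'(e^y)e^{2y}$ is continuous and strictly positive on $\R$ by \eqref{eq:Fprime_exp}. For $f\in D(A)$, approximate by $f_n(x)=\chi_n(\log x)\cdot (\rho_n* g)(\log x)$, where $\chi_n\in C_c^\infty(\R)$ is a standard cutoff with $|\chi_n'|\to 0$ and $\rho_n$ is a standard mollifier on $\R$. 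Both $f_n\to f$ and $Af_n\to Af$ in $\Lpi$ follow from the translation-invariance of mollification (taking care of the $xf'$ part of $A$), the uniform bound $\tilde S_y(t)\le 1$ on the killing factor, and dominated convergence for the $xR(x)f$ term (which is integrable on the support of $f$ by local integrability of $R$).

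To conclude, $A+K$ with domain $D(A)$ generates a strongly continuous semigroup on $\Lpi$ by the bounded perturbation theorem, and this semigroup coincides with $(P_t)$: both generators agree with $A+K$ on $C_c^1(0,\infty)$ by Proposition~\ref{prop:X_Markov}(iii), and a standard uniqueness argument for the associated abstract Cauchy problem identifies the two semigroups and shows $D(\RG)=D(A)$. Given $f\in D(\RG)=D(A)$ and approximating $f_n\in C_c^1$ supplied by the previous step, we then have $Kf_n\to Kf$ by boundedness of $K$, and hence $\RG f_n=Af_n+Kf_n\to Af+Kf=\RG f$ in $\Lpi$, completing the core property for $\RG$. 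The main obstacle is the middle step: because $R$ is only assumed locally integrable, the killing factor $\surv_x(t)$ has no pointwise regularity, so a mollifier in the original coordinates need not commute (even approximately) with $A$. The change to log-coordinates is precisely the device that restores a translation-invariant structure, under which the standard mollification and cutoff arguments go through.
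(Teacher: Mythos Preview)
Your overall architecture matches the paper's: the same splitting $\RG = A + K$ (the paper's $\RGB + \RGK$), the same bound $\|K\|\le 2$, and the same reduction to a core statement for $A$. Where you diverge is in the two technical steps, and one of them has a genuine gap.

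\textbf{The combining step.} Your route via the bounded perturbation theorem plus maximality of generators is a clean alternative to what the paper does. The paper instead establishes the sharp resolvent bound $\|(\lambda-A)^{-1}\|\le (2+\lambda)^{-1}$ (your observation $\|S_t\|=e^{-2t}$ already implies this), combines it with $\|K\|\le 2$ to make $I-K(\lambda-A)^{-1}$ invertible by Neumann series, and then checks directly that $(\lambda-\RG)C_c^1$ is dense. Your argument is arguably more robust, since it does not need the constants to match.

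\textbf{The core of $A$.} Here your sketch has a real gap. The mollification argument requires knowing, for a general $f\in D(A)$, that $f$ is locally absolutely continuous with weak derivative $f'=R\cdot f + Af$ (in log-coordinates: $g'=Vg+\tilde A g$). You have not established this, and it does not follow from ``translation-invariance of mollification'': the abstract definition of $D(A)$ as the domain of the generator of $(S_t)$ gives you no pointwise regularity of $f$. The only way to get it is through the explicit resolvent formula
\[
((\lambda-A)^{-1}k)(x)=\frac{x^{1+\lambda}}{F'(x)}\int_x^\infty \frac{k(y)}{y^{2+\lambda}}\,\pi(dy),
\]
which shows every element of $D(A)$ is a product of locally AC functions and hence locally AC itself. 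This is precisely the computation the paper performs; once you have it, the cutoff-then-mollify argument does go through (cutoff first is important: with $R$ only in $\Lloc$, you cannot control the commutator $[\rho_n\ast,\,V]$ or the weight ratio $w(y-u)/w(y)$ uniformly at infinity). Your parenthetical ``integrable on the support of $f$'' is also off: a general $f\in D(A)$ need not be compactly supported, and $xR(x)f(x)$ need not lie in $\Lpi$ on its own. The point is that after cutoff the support is compact, $g$ is continuous hence locally bounded, and then $V\chi_n g\in L^1_{\mathrm{loc}}$; the full $Vg$ never has to be globally integrable.

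In short: your combining step is a legitimate and slightly slicker alternative, but your mollification step presupposes the resolvent identity that the paper proves, and once that identity is in hand the paper's direct approximation of the resolvent image is no more work than your mollification.
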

  To show this, we will employ an idea used widely in the growth-fragmentation process literature (see for example \cite{Mischler2016} or \cite[Chapter 9]{Banasiak2006}).  We decompose the operator $\RG$ into two pieces $\RGB$ and $\RGK$ where $\RGB$ is roughly a differential part and $\RGK$ is in some sense a perturbation.  Here we define, for all $f \in C_c^1(0,\infty),$
  \begin{align*}
    \RGB f &= xf'(x) -xR(x)f(x) \\
    \RGK f &= xR(x) \int_0^x \frac{2u}{x^2} f(u)\,du.
  \end{align*}
  With this notation, we have that $\RG = \RGB + \RGK.$  We can view $\RGK$ as a perturbation because it is in fact a bounded linear operator on $\Lpi:$
  \begin{lemma}
    $\RGK$ extends uniquely to an operator on $\Lpi.$  Moreover,
    for all $f \in \Lpi,$ we have that 
    \[
      \Lpin[\RGK f] \leq 2\Lpin[f].
    \]
    \label{lem:rgk}
  \end{lemma}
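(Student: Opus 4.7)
The plan is a direct Fubini calculation leveraging the defining identity \eqref{eq:Fprime} for the stationary density $F'$. First I would write out the norm:
\[
\Lpin[\RGK f] = \int_0^\infty \left|\frac{2R(x)}{x}\int_0^x u f(u)\,du\right| F'(x)\,dx \le \int_0^\infty \frac{2R(x) F'(x)}{x}\int_0^x u |f(u)|\,du\,dx.
\]
Everything in sight being non-negative, Fubini applies unconditionally to yield
\[
\Lpin[\RGK f] \le \int_0^\infty u|f(u)| \left(\int_u^\infty \frac{2R(x)F'(x)}{x}\,dx\right)du.
\]
The inner integral is exactly $2F'(u)/u$ by \eqref{eq:Fprime}, so the $u/u$ cancels and one gets $\Lpin[\RGK f] \le 2\Lpin[f]$, which is the claimed bound.

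For the extension statement, the same computation applied to $|f|$ already shows that whenever $f\in\Lpi$ the pointwise formula $\RGK f(x) = \frac{2R(x)}{x}\int_0^x u f(u)\,du$ defines a function in $\Lpi$ (in particular the integral $\int_0^x u f(u)\,du$ is finite for $\pi$-a.e.\ $x$, hence for Lebesgue-a.e.\ $x$ by equivalence of $\pi$ and Lebesgue on $(0,\infty)$). Thus the formula itself gives a linear operator on $\Lpi$ with operator norm at most $2$, and uniqueness of the extension from $C_c^1(0,\infty)$ follows from density of $C_c^1(0,\infty)$ in $\Lpi$ together with the bound.

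There is essentially no obstacle: the only thing one needs to verify carefully is the applicability of Fubini, which is trivial here because the integrand is non-negative, and the fact that the Fubini identity gives the inner integral $\int_u^\infty R(x) F'(x)/x\,dx = F'(u)/u$ for every $u>0$, which is precisely the content of \eqref{eq:Fprime}. This is why Assumption~\ref{ass:stat} plays a central role: the bound reflects the fact that $\RGK$ is, heuristically, a mass-conservative piece of the generator against the invariant measure.
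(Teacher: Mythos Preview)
Your proof is correct and is essentially the same as the paper's: both compute $\Lpin[\RGK f]$ by Fubini--Tonelli and then invoke the identity \eqref{eq:Fprime} to collapse the inner integral $\int_u^\infty R(x)F'(x)/x\,dx$ to $F'(u)/u$. The only cosmetic difference is that the paper first proves the bound for $f\in C_c^1(0,\infty)$ and extends by density, whereas you apply Tonelli directly to $|f|$ for arbitrary $f\in\Lpi$; both are fine.
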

  \begin{proof}
    It suffices to show the estimate in the statement of the Lemma for all $f\in C_c^1(0,\infty).$  For these $f$, we have that by the Fubini-Tonelli theorem,
    \begin{align*}
      \Lpin[\RGK f]
      &\leq \int_0^\infty xR(x) \int_0^x \frac{2u}{x^2} |f(u)|\,du\pi(dx) \\
      &\leq \int_0^\infty 2u |f(u)| \int_u^\infty \frac{R(x)}{x}\pi(dx) \\
      &= \int_0^\infty 2|f(u)|\pi(du),\qquad\text{by \eqref{eq:Fprime}}.
    \end{align*}
This proves the lemma.
  \end{proof}
  
  In light of this, there is a sense in which $\RGB$ and $\RG$ can be compared analytically.  Further, for $\RGB,$ the core statement we wish to prove is relatively straightforward.
  \begin{proposition}
    Let $(Q_t)_{t\ge0}$ be the semigroup 
    \[
      Q_tf(x) = \Exp_x f(X_t) \one[ \tau_1 > t] = f(xe^t)S_x(t),
    \]
    for all bounded Borel measurable $f.$  Then
    \begin{enumerate}[(i)]
      \item $Q_t$ is a strongly continuous contraction semigroup on $\Lpi.$  
      \item For all $f \in C_c^1(0,\infty),$ we have that 
	\[
	  \lim_{t \to 0} t^{-1}(Q_t f - f) = \RGB f,\quad\text{in $\Lpi$.}
	\]
	\end{enumerate}
	Point (ii) says that $C_c^1(0,\infty)$ is a subset of the domain of the infinitesimal generator of the semigroup $(Q_t)_{t\ge0}$ and that $\RGB$ coincides with this generator on $C_c^1(0,\infty)$. We therefore denote without risk of ambiguity by $\RGB$ as well this infinitesimal generator and by $\domRGB\subset \Lpi$ its domain.
	\begin{enumerate}[(i)]\setcounter{enumi}{2}
      \item For any $\lambda > 0,$ the resolvent $(\lambda - \RGB)^{-1} = \int_0^\infty e^{-\lambda t} Q_t\,dt$ maps $\Lpi \to \domRGB$ and has operator norm
	\[
	  \|(\lambda - \RGB)^{-1}\| < \frac{1}{2+\lambda}.
	\]
      \item $C_c^1(0,\infty)$ is a core for $\RGB.$  
    \end{enumerate}
    \label{prop:rgb}
  \end{proposition}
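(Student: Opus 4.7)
The plan is to establish the four claims in order, with the main work saved for (iv).

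For (i), the semigroup property reduces to the composition identity $S_x(t+s) = S_x(t)\,S_{xe^t}(s)$, which follows immediately from the definition $S_x(t) = \exp(-\int_0^t r(xe^u)\,du)$ combined with the deterministic flow $x \mapsto xe^t$. Contraction in $\Lpi$ is obtained by applying \eqref{eq:xes2} to $h = |f|$: this yields in fact the sharper identity $\Lpin[Q_t f] = e^{-2t}\Lpin[f]$ for $f\ge 0$, and hence $\|Q_t\|_{\Lpi \to \Lpi} \le e^{-2t}$. Strong continuity then follows once (ii) is in hand by the standard density argument: the set of $f \in \Lpi$ on which $\Lpin[Q_tf - f] \to 0$ is closed by contractivity, contains $C_c^1(0,\infty)$, and $C_c^1(0,\infty)$ is dense in $\Lpi$ since $\pi \sim \mathrm{Leb}$ on $(0,\infty)$.

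For (ii), I would decompose the difference quotient as
\[
\frac{Q_tf(x) - f(x)}{t} = \frac{f(xe^t) - f(x)}{t}\,S_x(t) + f(x)\,\frac{S_x(t) - 1}{t}.
\]
The first term converges to $xf'(x)$ in $\Lpi$ by dominated convergence, using that $f \in C_c^1$ makes $[f(xe^t)-f(x)]/t$ uniformly bounded and supported in a fixed compact set while $S_x(t) \to 1$. For the second term, I would reuse the Scheff\'e argument from the proof of Proposition~\ref{prop:X_Markov}(iii): $(1-S_x(t))/t \to xR(x)$ a.e.\ by the Lebesgue differentiation theorem, and $\int (1-S_x(t))/t\,\pi(dx) = (1-e^{-2t})/t \to 2 = \int xR(x)\,\pi(dx)$ by \eqref{eq:integral_R}, so the nonnegative integrands converge in $\Lpi$, and multiplication by the bounded $f$ preserves this. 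Point (iii) is then immediate: the contractivity $\|Q_t\|_{\Lpi \to \Lpi} \le e^{-2t}$ yields $\|(\lambda - \RGB)^{-1} f\|_{\Lpi} \le \int_0^\infty e^{-(\lambda+2)t}\,dt \cdot \Lpin[f] = \Lpin[f]/(\lambda+2)$.

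The main work is in (iv), where I plan a duality argument. Showing $C_c^1(0,\infty)$ is a core is equivalent, by standard semigroup theory, to showing that $(\lambda - \RGB)(C_c^1(0,\infty))$ is dense in $\Lpi$ for some $\lambda > 0$. Suppose $\psi \in L^\infty(\pi) = (\Lpi)^*$ annihilates this image, i.e.
\[
\int \psi(x)\,(\lambda f(x) - xf'(x) + xR(x)f(x))\,F'(x)\,dx = 0, \quad \forall f \in C_c^1(0,\infty).
\]
Integrating by parts distributionally on the $xf'$ term yields the weak first-order ODE $(\psi x F')' = -\lambda\psi F' - xR\psi F'$ on $(0,\infty)$. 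Since the right-hand side is in $L^1_{\mathrm{loc}}$, $\psi xF'$ is absolutely continuous, and hence so is $\psi$. Solving the linear ODE and using \eqref{eq:Fprime_exp} to simplify $F'$, all exponential factors cancel and one obtains $\psi(x) = C\,x^{-\lambda-2}$ for some constant $C$. The constraint $\psi \in L^\infty$ then forces $C = 0$, because $x^{-\lambda-2}$ is unbounded near the origin; hence $\psi \equiv 0$ and the image is dense. The principal obstacle I anticipate is the careful setup of the distributional calculus that legitimizes the integration by parts and the explicit ODE analysis under only $L^\infty$-regularity of $\psi$, and ensuring that the various identities \eqref{eq:Fprime}--\eqref{eq:Fprime_exp} produce exactly the cancellation needed for the simple closed form of $\psi$.
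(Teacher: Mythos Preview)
Your proposal is correct, and for parts (i)--(ii) it tracks the paper closely (you even extract the sharper bound $\|Q_t\|_{\Lpi\to\Lpi}\le e^{-2t}$ directly from \eqref{eq:xes2}, whereas the paper only uses the comparison $Q_tf \le P_tf$ for $f\ge 0$).

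Where you diverge is in (iii) and (iv). The paper proves both together by writing down the explicit resolvent kernel: solving the first-order ODE $(\lambda-\RGB)f=g$ gives
\[
f(x)=\frac{x^{1+\lambda}}{F'(x)}\int_x^\infty \frac{g(y)}{y^{2+\lambda}}\,\pi(dy),
\]
from which the $1/(2+\lambda)$ bound drops out by Fubini. For the core property the paper then fixes $g\in C_c(0,\infty)$, takes the corresponding $f$ (which is supported in $(0,M]$ but not compactly away from $0$), truncates near the origin with bump functions $\rho_n$, and shows $(\lambda-\RGB)(f\rho_n)\to g$; this last step requires a separate verification that $x^2R(x)$ is integrable at $0$. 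Your route is cleaner on both counts: the resolvent bound is immediate from $\|Q_t\|\le e^{-2t}$ without ever writing the kernel, and your duality argument for (iv) is correct---the distributional ODE $(\psi xF')'=-(\lambda/x+R)\psi xF'$ integrates to $\psi xF'(x)=C x^{-\lambda}\exp(-\int_1^x R)$, and substituting \eqref{eq:Fprime_exp} yields $\psi(x)=C'x^{-\lambda-2}$, which is not in $L^\infty(\pi)$ unless $C'=0$ since $\pi\sim\mathrm{Leb}$. The paper's constructive approach has the advantage of producing an explicit approximating sequence in $C_c^1$, which is morally closer to what one uses downstream in the proof of Proposition~\ref{prop:core}; your dual argument trades this for brevity and avoids the somewhat fiddly truncation estimates near zero.
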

  \begin{proof}
    We begin by observing that for $f \geq 0,$ we have that $Q_t f \leq P_t f,$ at all points.  Hence we have that for all $f\in \Lpi,$
    \[
      \Lpin[ Q_t f ]
      \leq \Lpin[ Q_t |f| ]
      \leq \Lpin[ P_t |f| ] 
      \leq \Lpin[ f ]. 
    \]
    Hence $Q_t$ is indeed a contraction semigroup on $\Lpi.$ Strong continuity follows from the fact that for $f\in C_c(0,\infty)$, we have $Q_tf(x) =  f(xe^t)S_x(t) \to f(x)$ in $\mathrm L^\infty$, hence in $\Lpi$. 
   And hence, we also have it in $\Lpi,$ by the same argument as in Lemma~\ref{lem:semigroup}.
   This proves point (i).
   
   The proof of point (ii) is contained in the proof of part (iii) of Proposition~\ref{prop:X_Markov}.

   We now get to the main part of the proof, which is the proof of the resolvent estimate (iii) and that $C_c^1(0,\infty)$ is a core for $\RGB$ (point (iv)). We will prove them both together. For the statement about the core, we use the following condition from~\cite{EthierKurtz}.
   \begin{lemma}[Chapter 1, Proposition 3.1 of \cite{EthierKurtz}]
     \label{lem:eth}
     Let $A$ generate a strongly continuous contraction semigroup on a Banach space $L.$  Then a subspace $V \subset D_A$ is a core for $A$ if and only if $V$ is dense in $L$ and the range of $\lambda - A$ restricted to $V$ is dense in $L$ for some $\lambda > 0.$
   \end{lemma}
   The density of $C_c^1(0,\infty) \subset \Lpi$ is immediate.  Hence we must show that $\lambda - \RGB$ has dense range for some $\lambda > 0$ to complete the proof.  
   
   Fix $\lambda > 0.$  Suppose we would like to solve the equation
   \begin{align}
     \label{eq:diffeq}
     g = (\lambda - \RGB)f = \lambda f - xf' + xR(x)f(x),
   \end{align}
   for some function $g$.
   This is nothing but a first order linear differential equation, and hence we have the formal solution
   \[
     f(x) = \frac{1}{\mu(x)}
       \int_x^\infty \frac{g(y)}{y}\mu(y)\,dy
   \]
   where $\mu(y) = y^{-\lambda}\exp(-\int_1^y R(x)\,dx).$  Note that we can in fact express $\mu$ in terms of $\pi$ since by \eqref{eq:Fprime_exp} we have that
   \[
     R(z) = - \frac{d}{dz} \log\left( z^{-1} d\pi/dz \right).
   \]
   Hence we may formally express the solution to \eqref{eq:diffeq} by
   \begin{align}
     \label{eq:resol}
     f(x) = \frac{x^{1+\lambda}}{\tfrac{d\pi}{dx}}\int_x^\infty \frac{g(y)}{y^{2+\lambda}}\,\pi(dy)
   \end{align}
(note that $d\pi/dx$ is positive and continuous on $(0,\infty)$, see the discussion around \eqref{eq:Fprime}).
   
   When $g \in C_c(0,\infty),$ then this is a well-defined absolutely continuous solution to \eqref{eq:diffeq}. Hence, $(\lambda-\RGB)^{-1} g = f\in\domRGB$ and %
   \begin{align}
     \Lpin[(\lambda-\RGB)^{-1}g]
     & = \Lpin[f] \nonumber\\
     &\leq \int_0^\infty x^{1+\lambda}\int_x^{\infty} \frac{|g(y)|}{y^{2+\lambda}}\,\pi(dy)\,dx \nonumber\\
     &=\int_0^\infty \frac{|g(y)|}{y^{2+\lambda}}\int_0^{y} x^{1+\lambda}\,dx\,\pi(dy) \nonumber\\
     &=\frac{1}{2+\lambda}\int_0^\infty{|g(y)|}\,\pi(dy) \nonumber\\
  	  &=\frac{1}{2+\lambda}\Lpin[g]\label{eq:resolventbound}
   \end{align}
   By density, we conclude the desired estimate on $\|(\lambda - \RGB)^{-1}\|$ and thus prove point (iii).

   Fixing $g \in C_c(0,\infty),$ we will now show that $f$ can be approximated by a sequence $f_n \in C_c^1(0,\infty)$ so that $(\lambda-\RGB)f_n \to g$ in $\Lpin.$  We start by truncating the support of $f.$ By \eqref{eq:resol}, $f(x) = 0$ for large enough $x$, so that it is enough to truncate near the origin. For $n \in \mathbb{N},$ let $\rho_n$ be a $C_c^1(0,\infty)$ bump function that is $1$ for $x > 3n^{-1}$ and $0$ for $x < 2n^{-1}.$ We can choose these functions so their derivatives are bounded uniformly by $O(n)$ on $[2n^{-1},3n^{-1}].$  In particular, we have that $|\rho_n'(x)| \leq Cx^{-1}$ for some constant $C>0.$ We have,
   \begin{align}
     \label{eq:rgbtrunc}
      \Lpin[ (\lambda - \RGB)(f-f\rho_n)] &
      	\leq 
	\int\limits_0^\infty |\lambda f(x) - xf'(x)|(1-\rho_n(x))\,\pi(dx)\\
	&+
	\int\limits_0^\infty \left|xf(x)\rho_n'(x) \right|\,\pi(dx) \nonumber \\
	&+\int\limits_0^\infty xR(x)|f(x)|(1-\rho_n(x))\,\pi(dx). \nonumber
    \end{align}
  We wish apply dominated convergence to all of these integrals to get their convergence to 0.  We just need to exhibit suitable dominators.  Observe that $\lambda f(x) - xf'(x) = g(x) - xR(x)f(x),$ and $\Lpi[g] < \infty$. Hence, for the first and last integrals, it suffices to show that $\int_0^\infty xR(x)|f(x)|\pi(dx) < \infty.$ Moreover, since $f(x)$ vanishes for large $x$, it is enough to show that this integral converges near zero.

  From \eqref{eq:resol}, we have that $|f(x)|\tfrac{d\pi}{dx} < x^{1+\lambda}C_g$ for all $x \geq 0$ and some constant $C_g.$  Thus it suffices to show that $x^2R(x)$ is integrable at $0.$  Recall from Assumption~\ref{ass:stat} that $\int_0^\infty xR(x)\tfrac{d\pi}{dx}\,dx < \infty.$  
Furthermore, by \eqref{eq:Fprime_exp}, we have for all $x \le 1,$ 
  \[
    \frac{d\pi}{dx} \geq cx,
  \]
  where $c \coloneqq (d\pi/dx)(1)>0$, by the discussion around \eqref{eq:Fprime_exp}. Hence,
  \[
    \int_0^1 cx^2 R(x) \leq \int_0^\infty xR(x)\frac{d\pi}{dx}\,dx < \infty,
  \]
which was to be proven.

  For the second integral of \eqref{eq:rgbtrunc}, since $|\rho_n'(x)| \leq Cx^{-1},$ we can dominate the integrand by $C|f|$, since $\Lpin[f]<\infty$.

  We now argue that it is possible to smooth $f \rho_n$ slightly so that it is in $C_c^1(0,\infty).$  This is relatively straightforward by density.  Just observe that $(f\rho_n)'$ is a compactly supported $\Lpi$ function.  Hence we can choose $f_n \in C_c^1(0,\infty)$ so that 
  \[
    \max\left(\int_0^\infty x|(f\rho_n)'(x) - f_n'(x)|\,\pi(dx) , \|f_n - f\rho_n\|_\infty \right) < \frac{1}{n}.
  \]
  It now follows immediately that
  \(
    (\lambda - \RGB)(f_n - f\rho_n) \to 0
  \)
  in $\Lpin.$  As we also have that 
 \(
    (\lambda - \RGB)(f\rho_n - f) \to 0
  \)
  in $\Lpin,$ the proof is complete.
  \end{proof}

  \begin{proof}[Proof of Proposition~\ref{prop:core}]

    From Lemma~\ref{lem:eth} and Proposition~\ref{prop:rgb} part (iv), there is a $\lambda >0$ so that $\lambda - \RGB$ has dense range when restricted to $C_c^1(0,\infty).$ Then by Proposition~\ref{prop:rgb} part (iii), the resolvent operator $(\lambda - \RGB)^{-1} : \Lpi \to \domRGB$  has operator norm bounded by $(2+\lambda)^{-1}.$ Hence, by Lemma~\ref{lem:rgk}, we have that
\[
  I - \RGK(\lambda - \RGB)^{-1}
\]
is invertible on $\Lpi$ with bounded inverse.   

We now show that the range of $\lambda - \RG$ on $C_c^1(0,\infty)$ is dense in $\Lpi.$  Fix $f \in \Lpi,$ and let $h \in \Lpi$ have that $f = (I - \RGK(\lambda - \RGB)^{-1})h.$  By Proposition~\ref{prop:rgb}, the operator $\lambda - \RGB$ is dense on $C_c^1(0,\infty)$ and so for every $\epsilon > 0$ we can find a $g \in C_c^1(0,\infty)$ so that $\Lpin[ (\lambda - \RGB)g - h] < \epsilon.$  Then we have that
\[
\Lpin[ (\lambda - \RGB - \RGK)g - f] =
\Lpin[ (I - \RGK(\lambda - \RGB)^{-1})((\lambda - \RGB)g - h)] < 2\epsilon,
\]
which completes the proof.
  \end{proof}

We can now finally pass to the proof of the uniqueness result for \eqref{eq:diff_mu}:

\begin{proof}[Proof of Proposition~\ref{prop:uniqueness}]
  By Proposition~\ref{prop:core}, for any $f \in \domRG,$ there is a sequence $f_n \in C_c^1(0,\infty)$ so that 
  \(
f_n \to f
  \)
  and $\RG f_n \to \RG f$ in $\Lpi.$  Since
  $\|\tfrac{d\mu_t}{d\pi}\|_{\operatorname{L}^\infty(\pi)}
  $
  is uniformly bounded, we have that $(\mu_s, \RG f_n) \to (\mu_s, \RG f)$ uniformly in $s \in [0,\infty).$  By taking limits, we obtain:
  \begin{equation}
  \label{eq:diff_mu_full}
   \forall f \in \domRG: (\mu_t, f) - (\mu_0,f) = \int_0^t (\mu_s, \RG f)\,ds.
  \end{equation}

It remains to conclude from \eqref{eq:diff_mu_full} that $\mu_t = \mu_0 P_t$ for all $t\ge0$.  
This follows from a standard argument (see e.g.~\cite[Theorem 1.3]{DynkinMPBookI}), which we recall for convenience.
Set $\nu_t = \mu_0 P_t$ for all $t\ge0$. Then we have for all $f\in \domRG$ and $t\ge0$,
\begin{align*}
 (\nu_t, f) - (\nu_0,f) 
 &= (\nu_0, P_t f - f)\\
 &= (\nu_0, \int_0^t P_s \RG f\,ds) && \text{ by \eqref{eq:Pt_RG}}\\
 &= \int_0^t (\nu_0,P_s\RG f)\,ds\\
 &= \int_0^t (\nu_s,\RG f)\,ds.
\end{align*}
Hence, $(\nu_t)_{t\ge0}$ satisfies \eqref{eq:diff_mu_full}. By linearity, the difference $(\mu_t - \nu_t)_{t\ge0}$ then satisfies \eqref{eq:diff_mu_full} as well.

It now suffices to show that for 
    all $f \in C_c^1(0,\infty)$ and all $\lambda >0,$
    \[
      \int_0^\infty 
      e^{-\lambda t}
      (\mu_t-\nu_t,f)
      \,dt
      = 0,
    \]
    for then it follows that $(\mu_t-\nu_t,f)=0$ for all $t \geq 0$ and thus $\mu_t = \nu_t$ for all $t\ge0$.  Let $g = (\lambda -\RG)^{-1} f \in \domRG \subset \Lpi.$  Then 
    \begin{equation} \label{eq:una}
      \int_0^\infty 
      e^{-\lambda t}
      (\mu_t-\nu_t,f)
      \,dt
      = 
\int_0^\infty 
      e^{-\lambda t}
      (\mu_t-\nu_t,(\lambda - \RG)g)
      \,dt.
    \end{equation}
      Since $(\mu_t-\nu_t)_{t\ge0}$ solves \eqref{eq:diff_mu_full}, a primitive of $(\mu_t-\nu_t,\RG g)$ in $t$ is $(\mu_t-\nu_t,g).$  Note that, by assumption for all $t \geq 0,$
      \[
	|(\mu_t, g)| = |(\pi,g \tfrac{d\mu_t}{d\pi})|
	\leq 
	\|g\|_{\operatorname{L}^1(\pi)}
	\cdot
	\sup_{t \geq 0}\|\tfrac{d\mu_t}{d\pi}\|_{\operatorname{L}^\infty(\pi)}
	< \infty.
      \]
      Note also that for all $t \geq 0,$ by Lemma \ref{lem:semigroup}
      \[
	|(\nu_t, g)| 
	= |(\mu_0,P_t g)|
	\leq \int_0^\infty P_t(|g|) \tfrac{d\mu_0}{d\pi} d\pi 
	\leq \|g\|_{\operatorname{L}^1(\pi)}
	\cdot
	\sup_{t \geq 0}\|\tfrac{d\mu_t}{d\pi}\|_{\operatorname{L}^\infty(\pi)}
	< \infty.
      \]
      Combining the previous two display equations, we conclude that $(\mu_t-\nu_t,g)$ is bounded uniformly in $t.$  Thus for any $\lambda > 0,$ we have using integration by parts that
      \[
	\int_0^\infty 
	e^{-\lambda t}
	(\mu_t-\nu_t,\RG g)
	\,dt
	=
	\int_0^\infty 
	\lambda e^{-\lambda t}
	(\mu_t-\nu_t, g)
	\,dt.
      \]
      We conclude by splitting the right hand side of \eqref{eq:una} and applying the previous display that
      \begin{align*}
      \int_0^\infty 
      e^{-\lambda t}
      (\mu_t-\nu_t,f)
      \,dt
      &= 
\int_0^\infty 
      e^{-\lambda t}
      (\mu_t-\nu_t,\lambda g)
      \,dt 
      +
      \int_0^\infty 
      -\lambda
      e^{-\lambda t}
      (\mu_t-\nu_t,g)
      \,dt \\
      &= 0,
    \end{align*}
    which was to be proven.
\end{proof}

\subsection{Ergodicity}
\label{sec:ergodicity}

Recall the definition of the Markov process $X = (X_t)_{t\ge0}$ from Section~\ref{sec:construction_markov_process} and its associated semigroup $(P_t)_{t\ge0}$. 
The main goal of this section is to prove the following proposition:

\begin{proposition}
\label{prop:ergodicity_gf}
Under Assumption~\ref{ass:stat}, the Markov process $X$ is ergodic, i.e.~for every initial distribution $\nu$, we have
\begin{equation}
\label{eq:ergodicity}
\dtv\left(\nu P_t, \pi\right) \to 0,\quad \text{as $t\to\infty$.}
\end{equation}
Moreover, if $\mathcal M$ is a tight family of distributions, the convergence is uniform on $\mathcal M$, i.e.
\[
\sup_{\nu\in\mathcal M} \dtv\left(\nu P_t, \pi\right) \to 0,\quad \text{as $t\to\infty$.}
\]
\end{proposition}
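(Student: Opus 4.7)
The plan is to couple two copies of the cell process — one started from $\nu$ and one from $\pi$ — so that the two trajectories coincide after some almost-surely finite random time; combined with $\pi P_t = \pi$, this yields $\dtv(\nu P_t, \pi) \to 0$. The central probabilistic input I would exploit is the absolute continuity of the jump distribution: at every jump the position is multiplied by $J = \sqrt U$, which has density $2j\,dj$ on $(0,1)$. Hence the law of the process at a single post-jump epoch has a continuous density that is bounded below on an explicit interval uniformly over pre-jump positions lying in any compact subset of $(0,\infty)$.

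First I would establish a small-set (Doeblin) minorization: there exist a compact $K = [a,b] \subset (0,\infty)$, a time $T_0 > 0$, a constant $\delta > 0$, and a probability measure $\rho$ supported in the interior of $K$ such that $P_{T_0}(x,\cdot) \ge \delta\,\rho(\cdot)$ for all $x \in K$. To prove this one conditions on the event that exactly one jump occurs in $[0,T_0]$, whose probability is bounded below uniformly on $K$ because $r(y) = yR(y)$ is locally integrable on $K$; on that event the post-jump value is $xe^{\tau_1} J$, whose conditional density is bounded below on an open subinterval of $K$ uniformly in $x \in K$ by a direct computation using the density $2j$ of $J$. Second, I would show that $K$ is hit quickly by both copies. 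For the stationary copy, the process lies in $K$ at any given time with probability $\pi(K)$, which is close to $1$ for $K$ large. For the copy started from a tight family $\mathcal M$, deterministic exponential growth drives small starting points into $K$ before the first jump with uniformly positive probability, while \eqref{eq:R_positivity} forces large starting points to descend through $K$ in a uniformly bounded number of jumps (each factor $J<1$ being bounded below any given value with fixed positive probability).

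Having these two ingredients, the proof would be completed by iteration. At times $T_1 + kT_0$ (where $T_1$ is the uniform entry time into $K$ just established) we check whether both processes lie in $K$; if so we attempt a splitting coupling against the minorizing measure $\delta\rho$, which succeeds with probability at least some $\delta' > 0$ depending only on $K$ and $T_0$. On success, from that moment on the two processes can be driven by a common Poisson clock and a common sequence of multipliers $J_k$, so their trajectories agree for all subsequent times. The number of attempts before success is stochastically dominated by a geometric random variable whose parameter can be chosen uniformly in $\nu \in \mathcal M$, which yields the uniform statement of the proposition. The hardest part will be arranging the coupling across attempts: because the jump rate $r(y) = yR(y)$ is not uniformly bounded, one cannot naively drive the two processes by a single dominating Poisson clock, and I expect to have to restrict each coupling attempt to the high-probability event that both processes remain inside a large compact set during $[T_1 + kT_0, T_1 + (k+1)T_0]$, so that a Nummelin-style splitting can be set up on that event and then concatenated across failed attempts.
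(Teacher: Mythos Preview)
Your approach is genuinely different from the paper's, and it is worth contrasting the two. You propose a Doeblin minorization on a compact set $K$ followed by Nummelin splitting and geometric iteration of coupling attempts. The paper instead exploits a special feature of this PDMP: \emph{single points are non-polar}. Because the motion between jumps is the deterministic flow $x\mapsto xe^t$, the process started at any $y\le 1$ hits the point $1$ at time $\log(1/y)$ with probability $S_y(\log(1/y))>0$ (Lemma~\ref{lem:hit_points}). The paper then takes the successive hitting times of $1$ as genuine regeneration epochs; the interarrival law has a non-zero absolutely continuous part (Lemma~\ref{lem:H_abs_cont}), so Thorisson's spread-out renewal coupling gives $\dtv(\mu P_t,\nu P_t)\to 0$ for any pair of initial laws. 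Uniformity over tight families is obtained afterwards, not through geometric tails of the coupling time, but by Dini's theorem applied to the monotone family $x\mapsto \dtv(\delta_x P_t,\pi)$.

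The substantive difference lies in how recurrence is obtained, and this is where your sketch has a gap. You need the non-stationary copy to return to $K$ after each failed attempt, and your justification (``large starting points descend through $K$ in a uniformly bounded number of jumps'') is not supported by the hypotheses: beyond $R\in\Lloc$ and $Z^R<\infty$ you have no quantitative control on $xR(x)$ for large $x$, so neither the time between successive jumps nor a Foster--Lyapunov drift is available without extra assumptions. You yourself flag this as ``the hardest part.'' The paper sidesteps it entirely: since $\pi$ is invariant (Proposition~\ref{prop:X_Markov}(ii)), Birkhoff's ergodic theorem gives that $\pi$-a.e.\ starting point spends a positive time fraction in $[1/2,1]$; a short total-variation continuity lemma (Lemma~\ref{lem:couple_small_time}) upgrades this to every starting point, and then Lemma~\ref{lem:hit_points} converts visits to $[1/2,1]$ into infinitely many hits of the point $1$. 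Thus recurrence comes for free from the mere existence of the stationary law, with no drift condition and no compact-confinement argument. Your minorization step is fine and your strategy could in principle be completed, but the missing recurrence ingredient is exactly what the paper's regenerative/Birkhoff route supplies under the minimal assumption~\ref{ass:stat}.
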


We could prove this using tools from the theory of piecewise deterministic processes from Costa and Dufour \cite{Costa2008}, which ultimately relies on Meyn and Tweedie \cite{Meyn1993}. However, we prefer a more elementary approach using regeneration which is possible without too many technicalities due to the fact that points are hit almost surely (and are thus \emph{non-polar}).

 Define for $x\in(0,\infty)$,
\[
H_{x} = \inf\{t>0: X_t = x\}.
\]
We will need the following three preliminary lemmas:

\begin{lemma}
 \label{lem:couple_small_time}
For every $t\ge0$ and $x\in(0,\infty)$, denote by $\nu^x_t$ the law of $(X_{s+t})_{s\ge0}$ under $\Pr_{xe^{-t}}$. Then for every $x\in(0,\infty)$,
 \[
 \dtv(\nu^x_t,\nu^x_0) \to 0,\quad\text{as $t\to 0$}.
 \]
\end{lemma}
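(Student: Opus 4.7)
The plan is a one-step coupling argument driven by the event that no jump of $X$ occurs in $[0,t]$. The key observation is that under $\Pr_{xe^{-t}}$, on the event $\{\tau_1 > t\}$ the process evolves along the deterministic flow, so that $X_t = (xe^{-t})\cdot e^t = x$. By the strong Markov property (Proposition~\ref{prop:X_Markov}(i)) applied at the deterministic time $t$, conditionally on $\filt_t$ the post-$t$ process $(X_{s+t})_{s\ge0}$ has law $\Pr_{X_t}$; on $\{\tau_1 > t\}$ this equals $\Pr_x$, which is precisely the law $\nu^x_0$.

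Concretely, I would construct a coupling $(\mathbf Y,\mathbf Y')$ of $\nu^x_t$ and $\nu^x_0$ by first sampling $X$ under $\Pr_{xe^{-t}}$ and setting $\mathbf Y = (X_{s+t})_{s\ge0}$; on the event $\{\tau_1 > t\}$ we set $\mathbf Y' = \mathbf Y$, and on $\{\tau_1 \le t\}$ we draw $\mathbf Y'$ independently from $\nu^x_0$. The coupling inequality then yields
\[
\dtv(\nu^x_t,\nu^x_0) \le \Pr_{xe^{-t}}(\tau_1 \le t) = 1 - S_{xe^{-t}}(t) = 1 - \exp\left(-\int_{xe^{-t}}^{x} R(u)\,du\right).
\]
Since $R\in \Lloc$, the integral vanishes as $t\to 0$ by absolute continuity of the Lebesgue integral, so the right-hand side tends to $0$, which is the claim.

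I do not anticipate any real obstacle: the Markov property was established in Proposition~\ref{prop:X_Markov}(i), and the only analytic input is local integrability of $R$. The one point that deserves care is the measurability needed to set $\mathbf Y' = \mathbf Y$ on $\{\tau_1 > t\}$ versus an independent draw otherwise, but this is standard since $\{\tau_1 > t\}\in\filt_t$ and $\nu^x_0$ is a fixed probability law on Skorohod space.
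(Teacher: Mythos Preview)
Your proposal is correct and follows essentially the same argument as the paper: both reduce to the observation that under $\Pr_{xe^{-t}}$ one has $X_t=x$ on $\{\tau_1>t\}$, then invoke the Markov property so that the post-$t$ path has law $\Pr_x=\nu^x_0$ on this event, and finally use $\Pr_{xe^{-t}}(\tau_1\le t)=1-S_{xe^{-t}}(t)\to 0$ by local integrability of $R$. The paper merely phrases this more tersely, noting that it suffices for the law of $X_t$ under $\Pr_{xe^{-t}}$ to converge to $\delta_x$ in total variation, while you spell out the path-space coupling explicitly; the content is the same.
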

\begin{proof}
Since $X$ is a Markov process, it suffices to show that the law of $X_t$ under $\Pr_{xe^{-t}}$ converges to $\delta_x$ in total variation. But since $X_t = x$ on the event $\tau_1 > t$ under $\Pr_{xe^{-t}}$, we have,
\[
\Pr_{xe^{-t}}(X_t = x) \ge \Pr_{xe^{-t}}(\tau_1 > t) = S_{xe^{-t}}(t) = \exp\left(-\int_{xe^{-t}}^x R(z)\,dz\right),
\]
and this goes to 1 as $t\to0$.
This proves the claim.
\end{proof}

\begin{lemma}
\label{lem:hit_points}
For any $x\in(0,\infty)$ and $I\subset (0,x]$ compact, there exist $T>0$, such that
\[
\inf_{y\in I} \Pr_y(H_x < T) > 0.
\]
\end{lemma}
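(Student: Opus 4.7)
The plan is to exploit the deterministic growth of $X$ between jumps. Starting from $y \in (0,x]$, the process evolves as $X_t = ye^t$ on the event $\{\tau_1 > t\}$. Define $t_y = \log(x/y)$, so that $ye^{t_y} = x$. Then on the event $\{\tau_1 > t_y\}$, we have $X_{t_y} = x$ and hence $H_x \leq t_y$. By the definition of $\tau_1$ and of the survivor function,
\[
\Pr_y(\tau_1 > t_y) = S_y(t_y) = \exp\left(-\int_y^x R(z)\,dz\right).
\]

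Since $I \subset (0,x]$ is compact, $y_* \coloneqq \inf I > 0$. Set $T = \log(x/y_*) + 1$. For every $y \in I$, we have $t_y \leq T-1 < T$, and the monotonicity of the integral gives
\[
\Pr_y(H_x < T) \ge \Pr_y(\tau_1 > t_y) \ge \exp\left(-\int_{y_*}^x R(z)\,dz\right).
\]
The local integrability of $R$ (we have assumed throughout that $R \in \Lloc$) ensures that $\int_{y_*}^x R(z)\,dz < \infty$, so the right-hand side is a strictly positive constant independent of $y \in I$. Taking the infimum over $y\in I$ yields the claim.

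There is no real obstacle here; the statement is essentially a consequence of the construction of $X$ in Section~\ref{sec:construction_markov_process}, where the flow hits every point above the starting value in finite time, and the jump rate is integrable on compacts. The only mild subtlety is that one must use the closed interval structure of $I$ (i.e.\ $\inf I > 0$) so that $t_y$ is uniformly bounded and the integral of $R$ is finite; this is precisely where compactness of $I \subset (0,x]$ is used.
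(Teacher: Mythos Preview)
Your proof is correct and essentially identical to the paper's: both exploit that on $\{\tau_1 > \log(x/y)\}$ the deterministic flow carries $y$ to $x$, giving $\Pr_y(H_x < T) \ge \exp(-\int_{y_*}^x R(z)\,dz)$ for any $T > \log(x/y_*)$, where $y_* = \min I > 0$ by compactness. The only cosmetic difference is your explicit choice $T = \log(x/y_*)+1$ versus the paper's ``any $T > \log(x/y_0)$''.
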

\begin{proof}
Fix $x\in(0,\infty)$. Similarly to the proof of Lemma~\ref{lem:couple_small_time}, we have for every $y\in (0,x]$,
\[
\Pr_y(H_x = \log(x/y)) \ge \Pr_y(\tau_1 > \log(x/y)) = \exp\left(-\int_y^x R(z)\,dz\right).
\]
Hence, if $I\subset (0,x]$ is compact and $y_0 = \min I > 0$, then for every $T > \log(x/y_0)$,
\[
\inf_{y\in I} \Pr_y(H_x < T) \ge \exp\left(-\int_{y_0}^x R(z)\,dz\right) > 0.
\]
This shows the result.
\end{proof}

\begin{lemma}
\label{lem:H_abs_cont}
Suppose Assumption~\ref{ass:stat} is in place. Then for any $x\in(0,\infty)$, the law of $H_x$ under $\Pr_x$ has a non-zero absolutely continuous component.
\end{lemma}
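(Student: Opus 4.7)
The plan is to exhibit a positive-probability event $E$ on which $H_x$ can be expressed explicitly as a function of a continuous random variable, and to then produce an absolutely continuous density for the sub-probability measure $\Pr_x(H_x\in\cdot,\,E)$. In the notation of Section~\ref{sec:construction_markov_process}, let $E$ be the intersection of the events $\{\tau_1<\infty,\,e^{\tau_1}J_1<1\}$ and ``no second jump occurs before the deterministic flow carries $Y_1=xe^{\tau_1}J_1$ up to $x$''. The first event ensures the post-jump position $Y_1$ lies strictly below $x$, and the time to flow from $Y_1$ back to $x$ is $\log(x/Y_1) = -\tau_1-\log J_1$. On $E$ we therefore have
\[
H_x = \tau_1 + (-\tau_1-\log J_1) = -\log J_1.
\]

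Next, I will compute the joint density of $(\tau_1,J_1)$ restricted to $E$. The construction of $X$ gives $(\tau_1,J_1)$ the density $r(xe^t)S_x(t)\cdot 2j$ on $(0,\infty)\times(0,1)$, and conditional on $(\tau_1,J_1)=(t,j)$ with $e^tj<1$, the strong Markov property (Proposition~\ref{prop:X_Markov}(i)) yields that no further jump occurs during the deterministic flow from $Y_1=xe^tj$ up to $x$ with conditional probability $S_{xe^tj}(-t-\log j)$, which is strictly positive by local integrability of $R$. Substituting $v=-\log j$ to pass from $J_1$ to $H_x$ (Jacobian $e^{-v}$, and the constraint $j<e^{-t}$ becomes $v>t$) and integrating out $\tau_1$, I obtain that $\Pr_x(H_x\in\cdot,\,E)$ has the Lebesgue density
\[
g(v)\;=\;2e^{-2v}\int_0^v r(xe^t)\,S_x(t)\,S_{xe^{t-v}}(v-t)\,dt, \qquad v>0.
\]

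It remains to show that $g$ is not identically zero, which is the one spot where Assumption~\ref{ass:stat} is really used (via \eqref{eq:R_positivity}). Since the two survivor factors in the integrand are strictly positive everywhere, if $g$ vanished a.e.\ on $(0,\infty)$ then by Tonelli we would have $r(xe^t)S_x(t)=0$ for a.e.\ $t>0$, hence $R(y)=0$ for a.e.\ $y>x$, contradicting \eqref{eq:R_positivity}. Therefore $g>0$ on a set of positive Lebesgue measure, and the finite Borel measure $A\mapsto \Pr_x(H_x\in A,\,E)$ is a non-zero absolutely continuous component of the law of $H_x$ under $\Pr_x$. The only delicate point in the plan is the conditioning argument justifying the factor $S_{xe^tj}(-t-\log j)$ from the strong Markov property; everything else is a direct computation from the PDMP construction and the explicit density $2u$ of $J_1$.
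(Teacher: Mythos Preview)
Your proposal is correct and follows essentially the same approach as the paper: both restrict to the event on which exactly one jump occurs before the return to $x$ (your $E$ is the paper's $\{\tau_1\le H_x<\tau_2\}=\{\tau_1\le -\log J_1<\tau_2\}$ up to a null set), observe that $H_x=-\log J_1$ there, use the Markov property at $\tau_1$ to bring in the survivor factor $S_{xe^{\tau_1}J_1}(-\log J_1-\tau_1)$, and finally invoke \eqref{eq:R_positivity} to ensure non-triviality. The only difference is stylistic: you compute the density $g(v)$ explicitly and argue it is not identically zero via Tonelli, whereas the paper stops at the bound $\E_x[f(H_x)]\ge\E_x[f(-\log J_1)\,W\,\one[-\log J_1\ge\tau_1]]$ with $W>0$ a.s.\ and concludes directly from the absolute continuity and full support of $-\log J_1$ together with $\Pr_x(\tau_1<\infty)>0$.
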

\begin{proof}
Fix $x\in(0,\infty)$. Let $f:[0,\infty)\to[0,1]$ be Borel. Then
\begin{align*}
\E_x[f(H_x)] 
&\ge \E_x[f(H_x)\one[\tau_1 \le H_x < \tau_2]].
\end{align*}
By the definition of the process $X$, we have equality of the events
\[
\{\tau_1 \le H_x < \tau_2\} = \{\tau_1 \le -\log(J_1) < \tau_2\},
\]
and on both events we have $H_x = -\log(J_1)$. Applying the strong Markov property at time $\tau_1$, we then get
\[
 \E_x[f(H_x)\one[\tau_1 \le H_x < \tau_2]]= \E_x[\E_{xe^{\tau_1}J_1}[\tau_1 > -\log(J_1)] f(-\log(J_1))\one[-\log(J_1) \ge \tau_1]].
\]
Now note that $\E_{xe^{\tau_1}J_1}[\tau_1 > -\log(J_1)] = S_{xe^{\tau_1}J_1}(-\log(J_1)) > 0$, $\Pr_x$-almost surely. Collecting all the previous equations then gives, for some r.v.~$W>0$,
\[
\E_x[f(H_x)] \ge \E_x[f(-\log(J_1))W \one[-\log(J_1) \ge \tau_1]].
\]
Now note by definition of $J_1$, the law of the r.v. $-\log(J_1)$ is absolutely continuous w.r.t.~Lebesgue measure and has full support in $(0,\infty)$. Also, $J_1$ and $\tau_1$ are independent random variables. Furthermore, by \eqref{eq:R_positivity}, we have $\Pr_x(\tau_1 < \infty) = 1-S_x(\infty) > 0$ (this is the only place where Assumption~\ref{ass:stat} is used). The statement easily follows.
\end{proof}

We can now proceed to the proof of the main result from this section.

\begin{proof}[Proof of Proposition~\ref{prop:ergodicity_gf}]
We first prove the pointwise convergence result \eqref{eq:ergodicity}. This will be shown using a classical result on coupling of regenerative processes. Define a sequence of random times by
\[
\tau_0 = 0,\quad \forall n\ge0:\tau_{n+1} = \inf\{t>\tau_n:X_t = 1\}.
\]
Suppose for the moment we have established the following: for every initial distribution $\nu$, 
\begin{equation}
\label{eq:regeneration}
\Pr_\nu(\forall n:\tau_n < \infty) = 1.
\end{equation}
Then by the strong Markov property, the sequence $(\tau_n)_{n\ge1}$ is a sequence of regeneration times for the process $X$, in particular, $(\tau_n)_{n\ge1}$ is a renewal process with a delay distribution depending on $\nu$. Furthermore, by Lemma~\ref{lem:H_abs_cont}, the interarrival time $\tau_{n+1}-\tau_n$, $n\ge1$, which has the law of $H_1$ under $\Pr_1$, has a non-zero absolutely continuous component. This implies \cite[Chapter 10, Theorem 3.3]{ThorissonBook} that
\[
 \dtv\left(\mu P_t,\nu P_t\right)\to 0,
\]
for all initial distributions $\mu$ and $\nu$. In particular, this is the case for $\mu = \pi$, which satisfies $\pi P_t = \pi$ (Proposition \ref{prop:X_Markov} (ii)) for all $t \geq 0$ and which proves the pointwise convergence result \eqref{eq:ergodicity}.

It remains to show \eqref{eq:regeneration}. This will rely on Lemma~\ref{lem:hit_points} together with Birkhoff's ergodic theorem. We seperate the proof into several steps.

1) By the ergodic theorem, we have for every Borel $A\subset(0,\infty)$,
\begin{equation}
\label{eq:birkhoff}
\frac 1 t \int_0^t \1_{X_s\in A}\,ds \to \E_\pi[\1_{X_0\in A}\,|\,\mathscr I],\quad \text{$\Pr_\pi$-almost surely},
\end{equation}
where $\mathscr I$ is the \emph{invariant $\sigma$-field}, i.e.~the $\sigma$-field of events invariant under time shifts. Fixing a version of the conditional expectation on the right-hand side, this gives for every Borel $A\subset(0,\infty)$:
\begin{equation}
\label{eq:birkhoff2}
\text{for $\pi$-a.e. $x$: } \frac 1 t \int_0^t \1_{X_s\in A}\,ds \to \E_\pi[\1_{X_0\in A}\,|\,\mathscr I],\quad \text{$\Pr_x$-almost surely},
\end{equation}

2) We now show that $\mathscr I$ is trivial\footnote{This seems to be a general property of positive Harris recurrent Markov processes (maybe under some extra irreducibility assumption), but we could not find a proper reference.} under $\Pr_\pi$ (although one could work around it). Taking expectations in \eqref{eq:birkhoff2} and using dominated convergence gives for every Borel $A\subset(0,\infty)$, 
\[
\text{for $\pi$-a.e. $x$: } \frac 1 t \int_0^t \Pr_x(X_s\in A)\,ds \to \pi(A),
\]
In fact, the above statement is true for every $x$, which is an easy consequence of Lemma~\ref{lem:couple_small_time} and the fact that $\pi$ has full support in $(0,\infty)$ (see the discussion after \eqref{eq:Fprime_exp}). This implies
\[
\text{for all $x\in(0,\infty)$: } \dtv\left(\frac 1 t \int_0^t \delta_xP_s\,ds, \pi \right) \to 0,
\]
which, by \cite[Chapter 6, Theorem 5.2]{ThorissonBook} proves triviality of $\mathscr I$ under $\Pr_\pi$. 

3) By the triviality of $\mathscr I$ under $\Pr_\pi$, Equations \eqref{eq:birkhoff} and \eqref{eq:birkhoff2} hold with the right-hand side replaced by $\pi(A)$. Using again Lemma~\ref{lem:couple_small_time}, this implies the following for every Borel $A\subset(0,\infty)$:
\begin{equation}
\label{eq:birkhoff3}
\text{for all $x\in(0,\infty)$: } \frac 1 t \int_0^t \1_{X_s\in A}\,ds \to \pi(A),\quad \text{$\Pr_x$-almost surely.}
\end{equation}

4) Since $\pi$ has full support, we have $\pi([1/2,1]) > 0$. It follows from \eqref{eq:birkhoff3} that for every starting distribution $\nu$, the interval $[1/2,1]$ is visited a positive proportion of times $\Pr_\nu$-almost surely. Together with Lemma~\ref{lem:hit_points} and the strong Markov property, this easily allows to show that the point $1$ is visited an infinite number of times $\Pr_\nu$-almost surely. This is exactly \eqref{eq:regeneration}, which was to be shown.

\medskip

We have shown \eqref{eq:ergodicity}. We now show that the convergence is uniform on tight families of distributions on $(0,\infty)$. Define for all $t \geq 0$ and all $x > 0,$
  \[
    m_t(x) = \dtv\left( 
    \delta_x P_t, \pi
    \right).
  \]
  As $P_t$ is a contraction in the total variation distance, this function is monotone decreasing in $t$. Further, by \eqref{eq:ergodicity}, it converges to $0$ pointwise as $t \to \infty.$  We claim that the convergence is uniform on compact sets. For this, it is enough to show the same for the function
  \[
  \widetilde m_t(x) = \begin{cases}
  m_{t-\log x}(x), & x\le e^t\\
  1, & \text{otherwise}.
  \end{cases}
  \]
 This function converges again pointwise to 0 as $t\to\infty$ and is monotone decreasing in $t$. Further, using Lemma~\ref{lem:couple_small_time} it is easy to show that $\widetilde m_t(x)$ is continuous in $x$ for every $x < e^t$. By Dini's theorem, $\widetilde m_t$ therefore converges to $0$ as $t\to\infty$, uniformly on compact sets, and the same easily follows for $m_t$. 
 
 Now let $\mathcal M$ be a tight family of distributions on $(0,\infty)$. Fix $\epsilon > 0.$ We wish to show that $\sup_{\nu\in\mathcal M} \dtv\left( \nu P_t, \pi \right)<\epsilon$ for every $t$ large enough. By tightness of $\mathcal{M}$ we can find a compact set $K \subset \left( 0,\infty \right)$ such that for any $\nu \in \mathcal{M},$ $\nu(K^c) < \epsilon/2.$  We then bound
  \begin{align*}
    \dtv\left( \nu P_t, \pi \right) 
    &\leq
    \int\limits_0^\infty m_t(x)\,\nu(dx) \\
    &\leq 
    \int\limits_K m_t(x)\,\nu(dx) + \epsilon/2 \\
    &\leq \sup_{x \in K} m_t(x) + \epsilon/2.
  \end{align*}
The first term on the right-hand side is smaller than $\epsilon/2$ for large $t$ by the uniform convergence on compact sets of $m_t$. Since $\epsilon$ was arbitrary, this shows that $\sup_{\nu\in\mathcal M} \dtv\left( \nu P_t, \pi \right)\to 0$ goes to $0$ as $t\to\infty$, which was to be proven.
\end{proof}

\subsection{Proof of Proposition~\ref{prop:convergence_RFPSpace}}
\label{sec:proof_prop}

We now wrap up the results from the previous sections for the proof of Proposition~\ref{prop:convergence_RFPSpace}:

\begin{proof}[Proof of Proposition~\ref{prop:convergence_RFPSpace}]

Let $\F\in\RFPSpacea$ and define $\mu_t = d\F[t]$. By Proposition~\ref{prop:markov}, the family of measures $(\mu_t)_{t\ge0}$ satisfies \eqref{eq:diff_mu} for every $f\in C_c^1(0,\infty)$. Furthermore, by definition of the space $\RFPSpacea$, the family $(\mu_t)_{t\ge0}$ satisfies the first assumption of Proposition~\ref{prop:uniqueness}. Proposition~\ref{prop:uniqueness} then shows that $\mu_t = \mu_0 P_t$ for all $t\ge0$. The result then follows from Proposition~\ref{prop:ergodicity_gf}.
\end{proof}

\section{Continuity of $\COP$}
\label{sec:proof_continuity}
Recall the notion of locally uniform convergence on the space  $\DTSpace = \mathcal B([0,\infty),\DSpace)$ of Borel measurable maps from $[0,\infty)$ to $\DSpace$: $\F^{(n)} \Dto \F$ as $n\to\infty$ if and only if for all compact $K \subseteq [0,\infty)$ and all $T> 0,$
\[
\lim_{n \to \infty} \sup_{0 \leq t \leq T} \int\limits_K | \F[t]^{(n)}(x) - \F[t](x) |\,dx = 0.
\]
For clarity, we define a finer topology on $\DTSpace$ by saying $\F^n \DSto \F$ if for all $T > 0$
  \[
    \lim_{n \to \infty}
    \sup_{0 \leq s \leq T}
    \sup_{0 \leq x}
    | \F[s]^n(x) - \F[s](x)| \to 0,
  \]
which is metrizable in a similar way as was done for $\dblah.$  We let $\DTSSpace$ denote the resulting metric space.
\begin{lemma}
  Suppose that $\F^n \Dto \F,$ where $(t,x) \mapsto \F[t](x)$ is a continuous map from $[0,\infty)^2 \to [0,1]$ and for each $t,$ $\F[t](+\infty)=1$, then $\F^n \DSto \F.$
  \label{lem:conv_upgrade}
\end{lemma}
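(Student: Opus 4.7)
The plan is to combine three ingredients: tightness of the limit family $\{\F[t]:t\in[0,T]\}$, a monotonicity-plus-averaging argument upgrading the $\dblah$-bound to uniform pointwise control on compact $x$-intervals, and monotonicity together with tightness to dispose of the tail $x\geq N$. I will fix $T>0$ throughout and extract the supremum bound on $[0,T]\times[0,\infty)$.

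First I would establish tightness of $\{\F[t]:t\in[0,T]\}$: the function $g(t,x)=1-\F[t](x)$ is jointly continuous on $[0,T]\times[0,\infty)$, decreasing in $x$, and vanishes as $x\to\infty$ for each $t$. A standard covering argument on the compact interval $[0,T]$ then produces, for any $\epsilon>0$, a cutoff $N=N(\epsilon)$ with $\F[t](N)>1-\epsilon$ uniformly in $t\in[0,T]$. The central analytic step is uniform convergence on the rectangle $[0,T]\times[\delta, N]$ for any fixed $\delta\in(0,1)$. Monotonicity of $\F[t]^n$ and $\F[t]$ in $x$ yields the sandwich
\[
\frac{1}{\delta}\int_{x-\delta}^{x}\F[t]^n(y)\,dy \leq \F[t]^n(x) \leq \frac{1}{\delta}\int_{x}^{x+\delta}\F[t]^n(y)\,dy,
\]
and analogously for $\F[t]$. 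Taking differences (adding and subtracting an average of $\F[t]$ and using $\F[t](x+\delta)-\F[t](x-\delta)\leq 2\omega(\delta)$) leads to
\[
|\F[t]^n(x)-\F[t](x)| \leq \frac{1}{\delta}\int_{x-\delta}^{x+\delta}|\F[t]^n(y)-\F[t](y)|\,dy + 2\omega(\delta),
\]
where $\omega(\delta)$ denotes the modulus of continuity of $\F$ on the compact set $[0,T]\times[0,N+1]$. Uniform continuity gives $\omega(\delta)\to0$ as $\delta\to0$, while the hypothesis $\F^n\Dto \F$ makes the first term tend to $0$ uniformly in $(t,x)$ as $n\to\infty$; choosing $\delta$ small first and then $n$ large closes this step.

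For the tail $x\geq N$, applying the preceding uniform estimate at the single point $x=N$ gives $\F[t]^n(N)\geq 1-2\epsilon$ uniformly in $t$ for $n$ large, and monotonicity extends this to all $x\geq N$; since $\F[t](x)\in[1-\epsilon,1]$ there, the gap is at most $2\epsilon$. A parallel squeeze, using continuity of $\F$ at $0$ and the preceding step applied at $x=\delta$, handles the small region $[0,\delta)$. I expect the only subtle point to be the behaviour at $x=0$, where the averaging sandwich breaks and one must verify $\F[t]^n(0)\to\F[t](0)$ uniformly in $t$: this is automatic in the intended application to Proposition~\ref{prop:continuity}, where $\F[t](0)=0$, and in general can be recovered from the cadlag property together with the $\dblah$-convergence on small intervals adjacent to $0$. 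Otherwise the argument is a routine uniform extraction of pointwise convergence from $L^1_{\mathrm{loc}}$-convergence via monotonicity and uniform continuity on compacts.
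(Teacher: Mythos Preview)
Your proposal is correct and follows essentially the same strategy as the paper's proof: both exploit monotonicity of the $\F[t]^n$ together with uniform continuity of the limit $\F$ on compacts (plus Dini-type tightness) to upgrade the $L^1_{\mathrm{loc}}$ bound to a uniform pointwise bound, and both handle the tail and the neighbourhood of $0$ separately. The only stylistic difference is that you phrase the core step as a direct sandwich/averaging estimate, whereas the paper phrases it contrapositively (if $\F[t]^n(x)$ exceeds $\F[t](x)+2\epsilon$, then by monotonicity and equicontinuity the excess persists on an interval of length $\delta$, forcing the $L^1$ norm to be at least $\epsilon\delta$). These are two sides of the same coin. You are also right to flag the point $x=0$: the paper's own proof implicitly uses $\F[t](0)=0$ when it asserts $\F[t](x)\in[0,\epsilon]$ for $x\in[0,\delta]$, so your caveat is exactly on target and matches how the lemma is actually applied.
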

\begin{proof}
  Fix $T>0$ and $\epsilon>0$ and define the sequence of functions $f_n(t) = F_t(n)$ on $[0,T]$. By assumption, the functions $f_n$ are continuous and converge pointwise to $1$ as $n\to\infty$. By monotonicity, Dini's theorem implies that the convergence holds uniformly on $[0,T]$. Hence, we can find a $K > 0$ sufficiently large that $\F[t](K) > 1 - \epsilon/2$ for all $0 \leq t \leq T.$  By the continuity of $(t,x) \mapsto \F[t](x),$ we have that there exists a $\delta > 0$ sufficiently small that $|x-y| < \delta$ imply that $|\F[t](x) - \F[t](y)| < \epsilon/2,$ for all $t\in[0,T]$ and $x,y\in[0,K]$. Using that $\F[t](x) > 1-\epsilon/2$ for all $x\ge K$, it is easily shown that
\begin{align}
\label{eq:equicont}
|\F[t](x) - \F[t](y)| < \epsilon\quad\text{$\forall t\in[0,T],\,x,y\ge 0$ s.t. $|x-y| < \delta$}.
\end{align}
  
Now let $t\in[0,T]$ and $x\ge0$ such that $\F[t]^n(x) \ge \F[t](x) + 2\epsilon$. Note that necessarily, $x\le K$, otherwise $\F[t]^n(x) > 1+\epsilon$. Then by monotonicity of $\F[t]^n(x)$ in $x$ and \eqref{eq:equicont}, we have $\F[t]^n(y) > \F[t](y)+\epsilon$ for all $x < y < x+\delta.$  Hence, 
  \begin{align*}
    \forall t\in[0,T]:\int_0^{K+\delta} |\F[t]^n(x) - \F[t](x)|\,dx \ge \epsilon\delta.
  \end{align*}
  Thus, since $\sup_{0\le t\le T} \int_0^{K+\delta} |\F[t]^n(x) - \F[t](x)|\,dx\to 0$ as $n\to\infty$, we must have
\[
\limsup_{n\to\infty} \sup_{0\le t\le T} \sup_{x\ge0} \F[t]^n(x) - \F[t](x) \le 2\epsilon.
\]  

A similar argument shows that
\[
\liminf_{n\to\infty} \inf_{0\le t\le T} \inf_{x\in [\delta,K]} \F[t]^n(x) - \F[t](x) \ge -2\epsilon.
\]
By the monotonicity of $\F[t]^n$ and the fact that $\F[t](x) \ge 1-\epsilon$ for all $x\ge K$, we also have
\[
\liminf_{n\to\infty} \inf_{0\le t\le T} \inf_{x\ge K} \F[t]^n(x) - \F[t](x) \ge -3\epsilon.
\]
Furthermore, since $\F[t]^n\ge 0$ and $\F[t](x) \in [0,\epsilon]$ for all $x\in[0,\delta]$ by \eqref{eq:equicont}, we have
\[
\liminf_{n\to\infty} \inf_{0\le t\le T} \inf_{x\in [0,\delta]} \F[t]^n(x) - \F[t](x) \ge -\epsilon.
\]
Altogether, this shows that
\[
\limsup_{n\to\infty} \sup_{0\le t\le T} \sup_{x\ge0} |\F[t]^n(x) - \F[t](x)| \le 3\epsilon.
\]
As $\epsilon$ was arbitrary, we have that $\F^n \DSto \F.$
\end{proof}

This improved convergence allows us to control the Radon--Nikodym derivatives.

\begin{lemma}
  Suppose that $\Psi \in C^1[0,1]$ with derivative $\psi.$
  Suppose that $\F^n \DSto \F,$ where $(t,x) \mapsto \F[t](x)$ is a continuous map from $[0,\infty)^2 \to [0,1]$ with $\left\{ \F[t] : t \geq 0 \right\}$ tight.
  Then
  \[
    \lim_{n \to \infty} \sup_{0 \leq s \leq T} \int_0^\infty
    \left|
    \frac{d\Psi(\F[s]^n)}{d\F[s]^n}(x)
    -\psi(\F[s](x))
    \right|
    \,d\F[s]^n(x)
    =0
  \]
  \label{lem:rnderiv}
\end{lemma}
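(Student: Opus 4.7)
The plan is to identify the Radon--Nikodym derivative explicitly via the mean value theorem, and then exploit the uniform convergence $\F^n \DSto \F$ together with the continuity of the limit $\F$.

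First, since $\Psi \in C^1[0,1]$ is Lipschitz with constant $L := \|\psi\|_\infty$, the measure $d\Psi(\F[s]^n)$ is absolutely continuous with respect to $d\F[s]^n$ with density bounded by $L$. I claim that for $d\F[s]^n$-a.e.~$x$,
\[
\frac{d\Psi(\F[s]^n)}{d\F[s]^n}(x) = \psi(\xi_{n,s}(x)),\qquad \xi_{n,s}(x) \in [\F[s]^n(x^-), \F[s]^n(x)].
\]
Indeed, by Lebesgue differentiation, for $d\F[s]^n$-a.e.~$x$ the derivative equals $\lim_{r\to 0} \nu(B(x,r))/\mu(B(x,r))$ with $\mu = d\F[s]^n$ and $\nu = d\Psi(\F[s]^n)$, and each such ratio equals $\psi$ evaluated at some intermediate value in $[\F[s]^n((x-r)^-), \F[s]^n(x+r)]$ by the mean value theorem; passing to the limit confines this intermediate value to $[\F[s]^n(x^-), \F[s]^n(x)]$.

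Second, I would upgrade this to the uniform statement $\xi_{n,s}(x) \to \F[s](x)$ uniformly in $s\in[0,T]$ and $x\ge 0$. By the definition of $\DSto$, the quantity $\ep_n := \sup_{s\in[0,T],\, x\ge 0}|\F[s]^n(x) - \F[s](x)|$ tends to $0$. Taking left limits $y \uparrow x$ in the inequality $|\F[s]^n(y) - \F[s](y)| \le \ep_n$ and using that $\F[s](x^-) = \F[s](x)$ by the assumed joint continuity of $(t,x)\mapsto \F[t](x)$ yields $|\F[s]^n(x^-) - \F[s](x)| \le \ep_n$ as well. Since $\xi_{n,s}(x)$ is sandwiched between $\F[s]^n(x^-)$ and $\F[s]^n(x)$, it too satisfies $|\xi_{n,s}(x) - \F[s](x)| \le \ep_n$.

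Finally, $\psi$ is continuous on the compact interval $[0,1]$ and hence uniformly continuous there, so $\sup_{s\in[0,T],\, x\ge 0} |\psi(\xi_{n,s}(x)) - \psi(\F[s](x))| \to 0$ as $n\to\infty$. Since $\F[s]^n \in \DSpace$, the measure $d\F[s]^n$ has total mass at most $1$, so the integral in the statement is bounded by this uniform supremum and the conclusion follows. The only conceptually delicate point is that atoms of $\F[s]^n$ prevent one from simply using $\F[s]^n(x)$ in place of $\xi_{n,s}(x)$; this is harmless because the limit $\F[s]$ is continuous in $x$, which collapses the sandwich interval $[\F[s]^n(x^-), \F[s]^n(x)]$ to the single point $\F[s](x)$ in the uniform limit.
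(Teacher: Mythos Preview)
Your argument is correct and, in a sense, more direct than the paper's. Both proofs ultimately rest on the same two facts: the mean value theorem applied to $\Psi$, and the uniform continuity of $\psi$ on $[0,1]$. The paper, however, implements this by introducing upper and lower envelope functions
\[
  \psi_\delta^\pm(a) = \sup/\inf_{|a-x|,|a-y|<\delta} \frac{\Psi(x)-\Psi(y)}{x-y},
\]
and then shows, via a covering of the real line by short half-open intervals, that $\psi_\delta^-(\F[s]^n(x)) \le \tfrac{d\Psi(\F[s]^n)}{d\F[s]^n}(x) \le \psi_\delta^+(\F[s]^n(x))$ holds $d\F[s]^n$-a.e.\ once the atoms of $\F[s]^n$ are uniformly smaller than $\delta/2$. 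This last uniformity is where the paper invokes the tightness hypothesis (to control the modulus of continuity of $\F[s]$ uniformly in $s$ and $x$).

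Your route via the Besicovitch/Lebesgue differentiation theorem bypasses the envelope construction entirely: you pin down the Radon--Nikodym derivative at $d\F[s]^n$-a.e.\ point as a value of $\psi$ on the jump interval $[\F[s]^n(x^-),\F[s]^n(x)]$, and then use only the \emph{pointwise} continuity of $\F[s]$ (via $\F[s](x^-)=\F[s](x)$) together with the sup-norm bound $\varepsilon_n$ from $\DSto$-convergence to collapse that interval. In particular your argument never appeals to tightness of $\{\F[t]\}_{t\ge 0}$, so it is formally a slight strengthening of the lemma. The price you pay is invoking the differentiation theorem for general Radon measures on $\R$, which is a standard but somewhat heavier piece of machinery than the elementary partition argument the paper uses.
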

\begin{proof}
It is enough to prove the statement with $\psi(\F[s](x))$ replaced by $\psi(\F[s]^n(x))$, as is easily shown using the uniformity in the definition of the convergence $\F^n \DSto \F$. 
  Define for any $a \in [0,1]$ and any $\delta > 0$
  \[
    \psi_\delta(a) = \sup_{\substack{|a-x| < \delta \\ |a - y| < \delta}} 
    \frac{\Psi(x)-\Psi(y)}{x-y},
  \]
  where $x,y \in [0,1].$  %
  For any distribution function $F$ all of whose atoms are at most $\delta/2,$ there is a $\eta_0$ sufficiently small such that for all $a \geq 0$ and all $0 < \eta < \eta_0$
  \[
    F(a+\eta)-F(a) < \delta.
  \]
  Then by definition of $\psi_\delta,$
  \begin{align*}
    \int_a^{a+\eta} \psi_\delta(F(x))\,dF(x)
    &\geq 
    \int_a^{a+\eta}\frac{\Psi(F(a+\eta)) - \Psi(F(a))}{F(a+\eta) - F(a)}\,dF(x)\\
    &=\Psi(F(a+\eta)) - \Psi(F(a))
  \end{align*}
  It follows by taking unions that for any $U = (a,b]$ with $a<b$
  \[
    \int_U \psi_\delta(F(x))\,dF(x) > \int_U \frac{d\Psi(F)}{dF}(x)\,dF(x),
  \]
  and hence by a monotone class argument we have $\psi_\delta(F(x)) \geq \frac{d\Psi(F)}{dF}(x)$ $dF$--almost everywhere. %
  
  From the convergence of $\F^n \DSto \F,$ the continuity of $\F$ and the tightness of $\F[t],$ for any $\delta >0$ there is an $n_0$ sufficiently large and a $\eta_0$ sufficiently small that for $n \geq n_0$ and $0 < \eta <\eta_0,$
  \[
    \sup_{a \geq 0} \sup_{0 \leq s \leq T} (\F[s]^n(a+\eta) - \F[s]^n(a)) < \delta.
  \]
  Hence by the same argument as above, for any $0 \leq s \leq T$
  \[
    \frac{d\Psi(\F[s]^n)}{d\F[s]^n}(x) \leq \psi_\delta(\F[s]^n(x)).
  \]
  For any $\epsilon > 0$, we can pick $\delta$ sufficiently small that 
  $\psi_\delta(p) \leq \psi(p) + \epsilon$ for all $p \in [0,1].$
Then,
  \[
    \limsup_{n \to \infty} \sup_{0 \leq s \leq T} \int_0^\infty
    \left(
    \frac{d\Psi(\F[s]^n)}{d\F[s]^n}(x)
    -\psi(\F[s]^n(x))
    \right)_+   \,d\F[s]^n(x)
    \leq \epsilon.
  \]
  Taking $\epsilon \to 0$ shows this $\limsup$ is at most 0.

  An almost identical proof using
  \[
    \psi^{-}_\delta(a) = \inf_{\substack{|a-x| < \delta \\ |a - y| < \delta}} 
    \frac{\Psi(x)-\Psi(y)}{x-y},
  \]
  shows
  \[
    \limsup_{n \to \infty} \sup_{0 \leq s \leq T} \int_0^\infty
    \left(
    \frac{d\Psi(\F[s]^n)}{d\F[s]^n}(x)
    -\psi(\F[s]^n(x))
    \right)_{-}  \,d\F[s]^n(x)
    \leq 0.
  \]
  By what was mentioned at the beginning of the proof, this implies the statement of the lemma.
\end{proof}

Using this additional strengthening of the mode of convergence, we now show the continuity of $\COP$ at those limit points at which the first time evolving distribution function is continuous.
\begin{proof}[Proof of Proposition~\ref{prop:continuity}]
Let $(\F^n,\G^n)_{n\ge1}$ and $(\F,\G)$ be as in the statement of the proposition.
  We must show that for any fixed $T, K >0,$
  \[
    \lim_{n \to \infty}
    \sup_{0 \leq t \leq T} 
    \int\limits_0^K
    |\COP(\F^n,\G^n)_t(x)
    -\COP(\F,\G)_t(x)|\,dx = 0.
  \]
We bound $\COP(\F^n,\G^n)_t(x)-\COP(\F,\G)_t(x)$ pointwise by the following pieces:
\begin{align}
|\COP(\F^n&,\G^n)_t(x)-\COP(\F,\G)_t(x)| \nonumber \\ 
&\leq |\G[0]^n(e^{-t}x) - \G[0](e^{-t}x)| \label{eq:cop1} \\
&+\int_0^t (e^{s-t}x)^2\int_{e^{s-t}x}^\infty
{\left|\frac{d\Psi(\F[s]^n)}{d\F[s]^n}(z) - \psi(\F[s](z))\right|}
\frac{d\G[s]^n(s)}{z}
\,ds \label{eq:cop2} \\
&+\int_0^t (e^{s-t}x)^2
\left|\int_{e^{s-t}x}^\infty
\frac{\psi(\F[s](z))}{z}
(
d\G[s]^n(z) 
-d\G[s](z) 
)
\right|\,ds.
\label{eq:cop3}
\end{align}
The first piece \eqref{eq:cop1} converges to $0$ simply by definition of $\G^n \Dto \G.$  As for the second piece \eqref{eq:cop2}, we can bound
\begin{align*}
\int_0^t (e^{s-t}x)^2&\int_{e^{s-t}x}^\infty
{\left|\frac{d\Psi(\F[s]^n)}{d\F[s]^n}(z) - \psi(\F[s](z))\right|}
\frac{d\G[s]^n(s)}{z}\\
&\leq
\sup_{\substack{0 \leq s \leq t \\ 0 \leq z}} 
\left|\frac{d\Psi(\F[s]^n)}{d\F[s]^n}(z) - \psi(\F[s](z))\right|
\int_0^t (e^{s-t}x)\int_{e^{s-t}x}^\infty
\,d\G[s]^n(z)ds \\
&\leq
x
\sup_{\substack{0 \leq s \leq t \\ 0 \leq z}} 
\left|\frac{d\Psi(\F[s]^n)}{d\F[s]^n}(z) - \psi(\F[s](z))\right|.
\end{align*}
Hence by Lemma~\ref{lem:rnderiv}, this converges to $0$ in $\DTSpace.$

Finally, for the last piece \eqref{eq:cop3}, we let, for any $s,u \geq 0,$
\(
Q_s^n(u) = \int_u^\infty \frac{\psi(\F[s](z))}{z}\,d\G[s]^n(z),
\)
and let $Q$ denote the same with $\G^n$ replaced by $\G.$  By weak convergence of $\G[s]^n$ to $\G[s],$ at every point of continuity of $\G[s],$ we have that $Q_s^n(u) \to Q_s(u)$ as $n \to \infty.$

Hence, we have that at fixed $t,$
\begin{align*}
  \int_0^K
  \int_0^t (e^{s-t}x)^2
&\left|\int_{e^{s-t}x}^\infty
\frac{\psi(\F[s](z))}{z}
(
d\G[s]^n(z) 
-d\G[s](z) 
)
\right|\,ds\,dx \\
&=
  \int_0^K
  \int_0^t (e^{s-t}x)^2
  \left|Q^n_s(e^{s-t}x) - Q_s(e^{s-t}x)\right|\,ds\,dx \\
&=
  \int_0^t
  \int_0^K
  (e^{s-t}x)^2
  \left|Q^n_s(e^{s-t}x) - Q_s(e^{s-t}x)\right|\,dx\,ds \\ 
&=
  \int_0^t
  e^{t-s}
  \int_0^{e^{t-s}K}
  u^2
  \left|Q^n_s(u) - Q_s(u)\right|\,du\,ds.
\end{align*}
Let $H(s,t)^n$ be given by
\[
  H(s,t)^n = \int_0^{e^tK} x^2 \left|Q^n_s(x) - Q_s(x)\right|\,dx.
\]
Observe that we can bound $Q_s^n(u) \leq \frac{\|\psi\|_\infty}{u}.$  Hence by dominated convergence, for every fixed $s,t$ with $0\leq s \leq t,$
\[
  \lim_{n \to \infty} H(s,t)^n = 0.
\]
Further, we have that $H(s,t)^n$ can be bounded uniformly by $2e^{2t}K^2\|\psi\|_\infty.$  Hence again by dominated convergence, we have that
\begin{align*}
  \sup_{0 \leq t \leq T}
  \int_0^K
  \int_0^t (e^{s-t}x)^2
&\left|\int_{e^{s-t}x}^\infty
\frac{\psi(\F[s](z))}{z}
(
d\G[s]^n(z) 
-d\G[s](z) 
)
\right|\,ds\,dx \\
&\leq  \int_0^T e^{T} H(s,T)^n\,ds,
\end{align*}
which converges to $0$ by dominated convergence.
\end{proof}

\printbibliography

\end{document}